\numberwithin{equation}{section}
\DeclareMathOperator{\E}{\mathbb{E}}
\DeclareMathOperator*{\diag}{diag}
\renewcommand{\Pr}[2][]{\mathbb{P}_{#1} \left\{ #2 \rule{0mm}{3mm}\right\}}
\newcommand{\ip}[2]{\langle#1,#2\rangle}
\def \R {\mathbb{R}}
\def \CC {\mathcal{C}}
\def \LL {\mathcal{L}}
\def \RR {\mathcal{R}}
\def \a {\alpha}
\def \e {\varepsilon}
\def \d {\delta}
\def \l {\lambda}
\def \s {\sigma}
\def \tran {\mathsf{T}}
\def \onevector {{\bf 1}}
\def \Gr {\mathrm{G}}
\newtheorem{theorem}{Theorem}[section]
\newtheorem{proposition}[theorem]{Proposition}
\newtheorem{corollary}[theorem]{Corollary}
\newtheorem{lemma}[theorem]{Lemma}
\newtheorem{question}[theorem]{Question}
\theoremstyle{remark}
\newtheorem{remark}[theorem]{Remark}
\newtheorem{assumption}[theorem]{Assumption}
\title[]{Sparse random graphs: regularization and concentration of the Laplacian}
\author{Can M. Le \and Elizaveta Levina \and Roman Vershynin}
\email{canle@umich.edu, elevina@umich.edu, romanv@umich.edu}
\thanks{R. V. is partially supported by NSF grant 1265782 and U.S. Air Force grant FA9550-14-1-0009.  E. Levina is partially supported by NSF DMS grants 1106772 and 1159005.}
\date{\today}
\begin{document}

\begin{abstract}
We study random graphs with possibly different edge probabilities in the challenging sparse regime of bounded expected degrees. Unlike in the dense case, neither the graph adjacency matrix nor its Laplacian concentrate around their
expectations due to the highly irregular distribution of node degrees.
It has been empirically observed that simply adding a constant of order $1/n$ to each entry of the adjacency matrix  substantially improves the behavior of Laplacian. Here we prove that this regularization indeed forces Laplacian to concentrate even in sparse graphs.
As an immediate consequence in network analysis, we establish the validity of one of the simplest and fastest approaches to community detection -- regularized spectral clustering, under the stochastic block model.  Our proof of concentration
of regularized Laplacian is based on Grothendieck's inequality and factorization, combined with paving arguments.
\end{abstract}

\maketitle

\setcounter{tocdepth}{1}
\tableofcontents

\section{Introduction}\label{Sec:Introduction}
Concentration properties of random graphs have received a substantial attention in the probability literature. In statistics, applications of these results to network analysis have been a particular focus of recent attention, discussed in more detail in Section \ref{Sec:community}.     For dense graphs (with expected node degrees growing with the number of nodes $n$), a number of results are available \cite{Oliveira2010, Rohe2011, Chaudhuri&Chung&Tsiatas2012, Qin&Rohe2013, Lei&Rinaldo2013}, mostly when the average expected node degree grows faster than $\log n$.  Real networks, however, are frequently very sparse, and no concentration results are available in the regime where the degrees are bounded by a constant. This paper makes one of the first contributions to the study of concentration in this challenging sparse regime.

\subsection{Do random graphs concentrate?}

Geometry of graphs is reflected in matrices canonically associated to them, most importantly
the adjacency and Laplacian matrices.
Concentration of random graphs can be understood as concentration of these canonical
random matrices around their means.

To recall the notion of graph Laplacian, let $A$ be the $n \times n$ adjacency matrix of an undirected finite graph $G$ on the vertex set $V$, $|V| = n$, with $A_{ij} = 1$ if there is an edge between vertices $i$ and $j$, and $0$ otherwise.
The (symmetric, normalized) {\em Laplacian} is defined as
\footnote{
We first define the Laplacian of the subgraph induced by non-isolated nodes using \eqref{eq: Laplacian}
and then extend it for the whole graph by setting the new row and column entries of isolated nodes to zero.
However, we will only work with restrictions of the Laplacian on nodes of positive degrees.
}
\begin{equation}         \label{eq: Laplacian}
\LL(A) = D^{-1/2} (D - A) D^{-1/2} = I - D^{-1/2} A D^{-1/2}.
\end{equation}
Here $I$ is the identity matrix, and $D = \diag(d_i)$ is the
diagonal matrix with degrees $d_i = \sum_{j \in V} A_{ij}$ on the diagonal.
Graph Laplacians can be thought of as discrete versions of the Laplace-Beltrami operators on
Riemannian manifolds; see \cite{ChungFan1997}.

The eigenvalues and eigenvectors of the Laplacian matrix $\LL(A)$ reflect some fundamental
geometric properties of the graph $G$. The spectrum of $\LL(A)$, which is often called the {\em graph spectrum},
is a subset of the interval $[0,2]$. The smallest eigenvalue is always zero.
The \textit{spectral gap} of $G$, which is usually defined as the minimum of the second smallest eigenvalue and the
gap between $2$ and the largest eigenvalue,
provides a quantitative measure of connectivity of $G$; see \cite{ChungFan1997}.


In this paper we will study Laplacians of {\em random} graphs.
A classical and well studied model of random graphs is the Erd\"os-R\'enyi model $G(n,p)$,
where an undirected  graph $G$ on $n$ vertices is constructed by connecting each pair of vertices independently
with a fixed probability $p$.
Although the main result of this paper is neither known nor trivial for $G(n,p)$, we shall work with
a more general, {\em inhomogeneous Erd\"os-R\'enyi model} $G(n, (p_{ij}))$
in which edges are still generated independently, but with different probabilities $p_{ij}$; see e.g. \cite{Bollobas2007}.
This includes many popular network models as special cases, including the stochastic block model \cite{Holland83}.   We ask the following basic question.



\begin{question}
\label{main_question}
  When does the Laplacian of a random graph concentrate near a deterministic matrix?
\end{question}

More precisely, for a random graph drawn from the inhomogeneous Erd\"os-R\'enyi model
$G(n, (p_{ij}))$, we are asking whether, with high probability,
$$
\|\LL(A) - \LL(\bar{A})\| \ll    \| \LL(\bar{A}) \| \quad \text{for} \quad \bar{A} = \E A = (p_{ij}).
$$
Here $\|\cdot\|$ is the operator norm and $\LL(\bar{A})$ is the Laplacian of
the weighted graph with adjacency matrix $\bar{A}$ (obtained by simply replacing $A$ with $\bar{A}$
in the definition of the Laplacian).    Since the proper scaling for $\| \LL(\bar{A}) \|$ is $\Omega(1)$ (except for a trivial graph with no edges), we can equivalently restate the question as whether
$$
\|\LL(A) - \LL(\bar{A})\| \ll   1.
$$
The answer to this question is crucial in network analysis;  see Section~\ref{Sec:community}.

\subsection{Dense graphs concentrate, sparse graphs do not}				\label{s: dense sparse}

Concentration of relatively {\em dense} random graphs -- those whose with expected degrees grow at least as fast
as $\log n$  -- is well understood.   Both the adjacency matrix and the Laplacian concentrate in this regime.
Indeed, Oliveira \cite{Oliveira2010} showed that the inhomogeneous Erd\"os-R\'enyi model satisfies
\begin{equation}         \label{eq: Laplacian dense}
\|\LL(A) - \LL(\bar{A})\| = O \Big( \sqrt{\frac{\log n}{d_0}} \Big)
\end{equation}
with high probability, where $d_0 = \min_{i \in V} \sum_{j \in V} \bar{A}_{ij}$ denotes the smallest expected degree of the graph.
The concentration inequality \eqref{eq: Laplacian dense} is non-trivial when
its right-hand side is $o(1)$, i.e.,  $d_0 \gg \log n$.

Results like \eqref{eq: Laplacian dense} for the Laplacian can be deduced from
concentration inequalities for the adjacency matrix $A$, combined with (simple)
concentration inequalities for the degrees of vertices.
Concentration for adjacency matrices can in turn be deduced either from
matrix-valued deviation inequalities (as in \cite{Oliveira2010}) or from bounds for
norms of random matrices (as in \cite{Hajek&Wu&Xu2014}).

\medskip

For {\em sparse} random graphs, with bounded expected degrees,
neither the adjacency matrix nor the Laplacian concentrate,
due to the high variance of the degree distribution (\cite{Alon&Kahale1997, Feige2005, Coja-Oghlan&Lanka2009}).
High degree vertices make the adjacency matrix unstable, and low degree vertices make the Laplacian unstable.
Indeed, a random graph in Erd\"os-R\'enyi model $G(n,p)$
has isolated vertices with high probability if the expected degree $d = np$ is $o(\log n)$.
In this case, the Laplacian $\LL(A)$ has multiple zero eigenvalues,
while $\LL(\bar{A})$ has a single eigenvalue at zero and all other eigenvalues at $1$.
This implies that $\|\LL(A) - \LL(\bar{A})\| \ge 1$, so the Laplacian fails to concentrate.
Moreover, there are vertices with degrees $\gg d$ with high probability, which
force $\|A\| \gg d$ while $\|\bar{A}\| = d$, so the adjacency matrix
does not concentrate either.

\subsection{Regularization of sparse graphs}
\label{intro:regularization}

If the concentration of sparse random graphs fails because of the degree distribution is too irregular, we may naturally ask if  {\em regularizing}
the graph in some way solves the problem.   If such a regularization is to work, it has to enforce spectrum stability and  concentration
of the Laplacian, but also preserve the graph's geometry.

One simple way to deal with isolated and very low degree nodes, proposed by \cite{amini2013pseudo} and analyzed by \cite{Joseph&Yu2013}, is to add the same small number $\tau>0$ to all entries of the adjacency matrix $A$.   That is, we replace $A$ with
\begin{equation}							\label{eq: A tau}
A_\tau := A + \tau \onevector \onevector^\tran
\end{equation}
where $\onevector$ denotes the vector in $\R^n$ whose components are all equal to $1$, and then use the Laplacian of $A_\tau$ in all subsequent analysis. This regularization creates weak edges (with weight $\tau$) between all
previously disconnected vertices, thus increasing all node degrees by $n \tau$.
Another way to deal with low degree nodes,
proposed by \cite{Chaudhuri&Chung&Tsiatas2012} and studied theoretically by \cite{Qin&Rohe2013},
is to add a constant $n\tau$ directly to the diagonal of $D$ in the definition \eqref{eq: Laplacian}.

Our paper answers the question of whether the regularization \eqref{eq: A tau} leads to concentration in the sparse case, by which we mean the case when all node degrees are bounded.
Note that for both regularizations described above,
the concentration holds trivially if we allow $\tau$ to be arbitrarily large.
The concentration was obtained if $n\tau$ grows at least as fast as $\log n$  in \cite{Qin&Rohe2013} and \cite{Joseph&Yu2013}.
However, when all expected node degrees are bounded, this requirement will lead to $\tau \onevector \onevector^\tran$ dominating $\bar{A}$.
To apply the concentration results obtained in \cite{Qin&Rohe2013, Joseph&Yu2013} to community detection,
one needs the average of expected node degrees to grow at least as $\log n$,
although the minimum expected degree can stay bounded.   This is an unavoidable consequence of using Oliveira's result \cite{Oliveira2010}, which  gives a $\log n$ factor in the bound, and makes the extension of these bounds to our case of all bounded degrees difficult.

To the best of our knowledge,  up to this point it has been unknown
whether any regularization creates informative concentration for the adjacency matrix or the graph Laplacian in the sparse case.
However,  a different Laplacian based on non-backtracking random walks was proposed in \cite{Krzakala.et.al2013spectral} and analyzed theoretically in \cite{Bordenave.et.al2015non-backtracking};  this can be thought of as an alternative and more complicated form of regularization, since introducing non-backtracking random walks also avoids isolated nodes and very low degree vertices such as those attached to the core of the graph by just one edge (which includes dangling trees).
Other methods, which are related to the non-backtracking random walks,
are the belief propagation algorithm \cite{Decelle.et.al.2011RL, Decelle.et.al.2011}
and the spectral algorithm based on the Bethe Hessian matrix \cite{Saade&Krzakala&Zdeborova2014}.
Although these methods have been empirically shown to perform well in sparse case, there is no theoretical analysis available in that regime so far.

\subsection{Sparse graphs concentrate after regularization}\label{subsec:MainResult}

We will prove that regularization \eqref{eq: A tau} does enforce concentration
of the Laplacian
even for graphs with bounded expected degrees.
To formally state our result for the inhomogeneous Erd\"os-R\'enyi model, we
shall work with random matrices of the following form.

\begin{assumption}				\label{as: A}
  $A$ is an $n\times n$ symmetric random matrix whose binary entries are jointly independent on and above the diagonal, with
  $\E A = (p_{ij})$.
  Let numbers $d \ge e$, $d_0>0$ and $\a$ be such that
  \begin{equation*}
  \max_{i,j} n p_{ij} \le d, \quad
  \min_{j}\sum_{i=1}^n p_{ij} \ge d_0, \quad
  \frac{d}{d_0} \le \a.
  \end{equation*}
\end{assumption}

\begin{theorem}[Concentration of the regularized Laplacian]				\label{thm: main}
  Let $A$ be a random matrix satisfying Assumption~\ref{as: A} and denote $\bar{A}_\tau = \E A_\tau$.
  Then for any $r\geq 1$, with probability at least $1-n^{-r}$ we have
  \begin{equation*}
    \left\|\LL(A_\tau)-\LL(\bar{A}_\tau)\right\|
    \leq C r \a^2 \log^3(d)
    \left(\frac{1}{\sqrt{d}}+\frac{1}{\sqrt{n\tau}}\right)
    \quad \text{for any } \tau>0.
  \end{equation*}
  Here $C$ denotes an absolute constant.
\end{theorem}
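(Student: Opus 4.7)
The plan is to split $\LL(A_\tau) - \LL(\bar{A}_\tau)$ into a centered-adjacency piece and a degree-fluctuation piece. Let $\bar D_\tau = \E D(A_\tau) = \diag(\bar d_i + n\tau)$ with $\bar d_i = \sum_j p_{ij}$. Adding and subtracting $\bar D_\tau^{-1/2} A_\tau \bar D_\tau^{-1/2}$, and using $A_\tau - \bar A_\tau = A - \bar A$ since the regularization is deterministic, yields
\begin{equation*}
\LL(A_\tau) - \LL(\bar A_\tau) = \bar D_\tau^{-1/2}(\bar A - A)\bar D_\tau^{-1/2} + R,
\end{equation*}
where $R$ encodes the discrepancy between $D(A_\tau)^{-1/2}$ and $\bar D_\tau^{-1/2}$ on both sides of $A_\tau$. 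Since regularization forces every $d_i(A_\tau) \ge n\tau$, Bernstein's inequality applied row-by-row gives $|d_i(A_\tau) - (\bar d_i + n\tau)| \lesssim \sqrt{r(d+n\tau)\log n}$ uniformly in $i$ with probability $1-n^{-r}$, so the \emph{relative} degree fluctuation is of order $\sqrt{r\log n/(n\tau+d_0)}$. Combined with a crude deterministic bound on $\|\bar A_\tau\|$ and the bound on the centered term below, this makes $\|R\|$ acceptable and contributes the $1/\sqrt{n\tau}$ term.

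The heart of the proof is estimating $M := \bar D_\tau^{-1/2}(A - \bar A)\bar D_\tau^{-1/2}$ in operator norm. In the sparse regime $\|A-\bar A\|$ itself fails to concentrate, since a single atypically high-degree vertex destroys matrix Bernstein and trace-moment bounds, so these tools cannot be applied. I would use Grothendieck's inequality to replace $\|M\|$ by the discrete quantity $\|M\|_{\infty\to 1} = \max_{\varepsilon,\delta \in \{\pm 1\}^n} |\sum_{i,j} M_{ij}\varepsilon_i\delta_j|$, which for each fixed $(\varepsilon,\delta)$ is a sum of independent centered scalars amenable to Bernstein. A direct union bound over $2^{2n}$ sign vectors is too lossy, so I would invoke Grothendieck's factorization theorem: an $\infty\to 1$ bound on any submatrix $M'$ yields a factorization $|u^\tran M' v| \le K_G \|M'\|_{\infty\to 1} (\sum \pi_i u_i^2)^{1/2}(\sum \rho_j v_j^2)^{1/2}$ with data-adapted probability weights $\pi,\rho$. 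Paving then enters: dyadically partition $[n]$ into $O(\log d)$ classes both by $\bar d_i$ and by coordinate magnitude of the test vectors, bound the $\infty\to 1$ norm of each block via Bernstein plus a small union bound, and sum the resulting weighted factorization estimates over $O(\log^2 d)$ pairs of blocks. Iterating the decomposition one more time to glue the weighted $\ell_2$ bounds back into an unweighted operator-norm bound produces the $\log^3(d)$ factor.

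The main obstacle is precisely this Grothendieck-plus-paving step. Grothendieck's inequality alone is too lossy under any $2^n$-union bound; paving alone does not connect discrete cut-norm estimates to the continuous $\ell_2\to\ell_2$ operator norm; the factorization theorem is the bridge that isolates the few heavy rows and columns (treated by direct counting) from the bulk (treated by Bernstein on sign vectors restricted to a dyadic block). Once $\|M\|$ is bounded, combining with the degree concentration of Step~1 closes the triangle. The factor $\a^2$ emerges when $\bar d_i$ must be replaced by $d$ via $\bar d_i \ge d_0 \ge d/\a$ at two distinct points, once in the normalization $\bar D_\tau$ and once in the factorization weights coming from Grothendieck.
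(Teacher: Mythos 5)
Your proposal correctly identifies the three main tools (Grothendieck's inequality plus factorization, Bernstein, and a paving/iteration argument with a $\log^3 d$ cost), but the overall structure diverges from the paper in a way that creates a genuine gap.

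The crucial problem is your first step: you claim a \emph{uniform} degree bound $|d_i(A_\tau) - (\bar d_i + n\tau)| \lesssim \sqrt{r(d+n\tau)\log n}$ holding simultaneously for all $i$ with probability $1-n^{-r}$, giving a relative fluctuation of order $\sqrt{r\log n/(n\tau+d_0)}$. In the regime of interest $d$, $n\tau$, and $d_0$ are all bounded constants while $n\to\infty$, so this relative fluctuation is $\Omega(\sqrt{\log n})$, which does not go to zero and is much larger than the target bound, which carries only a $\log d = O(1)$ factor. More fundamentally, there is no uniform concentration of degrees in a sparse graph: with bounded expected degree, some vertices have degree $\Theta(\log n/\log\log n)$ with high probability, so the entry-wise discrepancy between $D(A_\tau)^{-1/2}$ and $\bar D_\tau^{-1/2}$ is $\Theta(1)$ at a few bad indices, and $\|R\|$ is not small no matter how it is combined with bounds on $\|\bar A_\tau\|$. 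Regularization cannot rescue this: adding $n\tau$ lower-bounds the degrees, but does not prevent a small set of rows from having anomalously \emph{large} degree.

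The paper sidesteps this by never seeking a uniform degree estimate. Instead it constructs a core $J$ with $|J^c| \le n/d$ on which degrees concentrate with only a $\sqrt{d\log d}$ deviation (Lemma~\ref{lem: degrees on core}), establishes concentration of $\LL(A)$ (without any $\tau$) on $J\times J$ (Theorem~\ref{thm: Laplacian on core}), and then handles the residual blocks $J^c\times[n]$ and $J\times J^c$ by a separate mechanism: a combinatorial sparse decomposition of small submatrices (Theorem~\ref{thm: decomposition of residual undirected}) plus the restriction Lemma~\ref{lem: restriction Laplacian} to show $\|L(A_\tau)\|$ is small on any submatrix with few rows or columns. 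Regularization enters \emph{only} in that last residual step. The paving argument in the paper is also different from yours: it is not a dyadic partition by degree scale or test-vector magnitude, but an iterative extraction of large sub-blocks via Grothendieck's Theorem~\ref{thm: Grothendieck} to grow the core from missing $0.1n$ vertices down to missing $n/d$, with the $\log^2 d$ coming from the number of iterations and block sizes and a further $\log d$ from the doubling steps. Your proposal is missing this core/residual dichotomy, which is the structural idea that makes the argument work in the bounded-degree regime; without a way to remove the handful of degree-atypical vertices before comparing $D$ with $\bar D$, the degree-fluctuation remainder $R$ cannot be controlled.
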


We will give a slightly stronger result in Theorem~\ref{thm: regularized Laplacian}.
The exponents of $r$, $\a$ and of $\log d$ are certainly not optimal,
and to keep the argument more transparent,  we did not try to optimize them.
We do not know if the $\log d$ term can be completely removed; however,  in sparse graphs
$d$ and thus $\log d$ are of constant order anyway.

\begin{remark}[Concentration around the original Laplacian]
  It is important to ask whether regularization does not destroy the original model --
  in other words, whether $\LL(\bar{A}_\tau)$ is close to $\LL(\bar{A})$.
  If we choose the regularization parameter $\tau$ so that $d \gg n\tau \gg 1$,
  it is easy to check that $\|\LL(\bar{A}_\tau) - \LL(\bar{A})\| \ll 1$, thus
  regularization almost does not affect the {\em expected} geometry of the graph.
  Together with Theorem~\ref{thm: main} this implies that
  $$
  \left\|\LL(A_\tau)-\LL(\bar{A})\right\| \ll 1.
  $$
  In other words, regularization forces the Laplacian to stay near $\LL(\bar{A})$, and this would not happen without regularization.
\end{remark}

\begin{remark}[Weighted graphs]
Since our arguments will be based on probabilistic rather than graph-theoretic considerations, the assumption that $A$
has binary entries  is not at all crucial.
With small modifications, it can be relaxed for matrices with entries that take values in the interval $[0,1]$,
and possibly for more general distributions of entries. We do not pursue such generalizations
to make the arguments more transparent.
\end{remark}

\begin{remark}[Directed graphs]
Theorem \ref{thm: main} also holds for directed graphs (whose adjacency matrices are not symmetric and have all independent entries) for a suitably modified definition of the Laplacian \eqref{eq: Laplacian},
with the two appearances of $D$ replaced by matrices of row and column degrees, respectively.
In fact, our proof starts from directed graphs and then generalizes to undirected graphs.
\end{remark}

\subsection{Concentration on the core}

As we noted in Section \ref{s: dense sparse},
sparse random graphs fail to concentrate without regularization.    We are going to show that this failure is caused by just a few vertices, $n/d$ of them.
On the rest of the vertices, which form what we call {\em the core}, both
the adjacency matrix and the Laplacian concentrate even without regularization.
The idea of constructing a graph core with large spectral gap has been exploited before.
Alon and Kahale \cite{Alon&Kahale1997} constructed a core for random 3-colorable graphs
by removing vertices with large degrees;
Feige and Ofek \cite{Feige2005} constructed a core for $G(n,p)$ in a similar way;
Coja-Oghlan and Lanka \cite{Coja-Oghlan&Lanka2009} provided a different construction
for a somewhat more general model (random graphs with given expected degrees),
which in general cannot be used to model networks with communities.
Alon and co-authors \cite{Alon&Coja-Oghlan&Han&Kang&Schacht2010} used Grothendieck's inequality
and SDP duality to construct a core;
they showed that the discrepancy of a graph, which measures how much it resembles a random graph with given expected degrees,
is determined by the spectral gap of the restriction of the Laplacian on the core (and vise versa).

The following result gives our construction of the core for the general
inhomogeneous Erd\"os-R\'enyi model $G(n,(p_{ij}))$.
As we will discuss further, our method of core construction is very different from the previous works.

\begin{theorem}[Concentration on the core]			\label{thm: core intro}
  In the setting of Theorem~\ref{thm: main}, there exists a subset $J$ of $[n]$
  which contains all but at most $n/d$ vertices, and such that
  \begin{enumerate}
    \item the adjacency matrix concentrates on $J \times J$:
      $$
      \|(A - \bar{A})_{J \times J}\| \le C r \sqrt{d} \log^3 d;
      $$
    \item the Laplacian concentrates on $J \times J$:
      $$
      \|(\LL(A) - \LL(\bar{A}))_{J \times J}\| \le \frac{C r \a^2 \log^3 d}{\sqrt{d}}.
      $$
  \end{enumerate}
\end{theorem}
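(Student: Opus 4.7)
The plan is to deduce Theorem~\ref{thm: core intro} from Theorem~\ref{thm: main} by identifying a small subset of vertices whose presence is exactly what forced the regularization in the first place, and declaring their complement to be the core $J$. On $J$, the original and regularized Laplacians will agree up to negligible error, which will promote the bound from Theorem~\ref{thm: main} to an unregularized bound on $J \times J$.

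For the adjacency part, I would take $J$ to exclude vertices of anomalously large observed degree, e.g.\ $J = \{i : d_i \le C_0 d\}$ for a suitable absolute constant $C_0$. A Chernoff/Bernstein tail bound gives $\P(d_i > C_0 d) \le e^{-c d}$, so by Markov's inequality applied to $|J^c|$ we obtain $|J^c| \le n/d$ with high probability (the case of very small $d$ is essentially trivial since then $n/d \asymp n$ and the stated bounds are constants). Restricted to $J \times J$, the adjacency matrix has maximum row sum $O(d)$ and expected entries of order $d/n$, so the bound $\|(A - \bar A)_{J\times J}\| \le C r \sqrt{d} \log^3 d$ should follow from a Feige--Ofek style combinatorial argument, or alternatively from the truncated matrix estimate that appears as a key intermediate step inside the proof of Theorem~\ref{thm: main}.

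For the Laplacian part, apply Theorem~\ref{thm: main} with $\tau$ chosen so that $n\tau \asymp d$, which produces the right-hand side $C r \a^2 \log^3(d)/\sqrt d$ for $\|\LL(A_\tau) - \LL(\bar A_\tau)\|$. I would then decompose
\begin{equation*}
(\LL(A) - \LL(\bar A))_{J\times J}
 = \bigl[\LL(A) - \LL(A_\tau)\bigr]_{J\times J}
 + \bigl[\LL(A_\tau) - \LL(\bar A_\tau)\bigr]_{J\times J}
 + \bigl[\LL(\bar A_\tau) - \LL(\bar A)\bigr]_{J\times J},
\end{equation*}
with the middle term controlled by Theorem~\ref{thm: main}. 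For the outer terms, one further restricts $J$ so that observed and expected degrees are both comparable to $d$ on $J$; then a direct entrywise comparison of $D^{-1/2}AD^{-1/2}$ with $D_\tau^{-1/2}A_\tau D_\tau^{-1/2}$, combined with the adjacency bound from the previous step, gives the required estimate.

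The main obstacle will be designing $J$ to meet both constraints at once: $|J^c| \le n/d$, and, on $J$, the unregularized Laplacian closely tracks the regularized one. Keeping $|J^c|$ small forces us to retain most vertices, while matching the two Laplacians on $J$ requires that retained vertices have observed degrees not much smaller than their expectations. In the sparse regime many vertices can have degree well below their expected degree (indeed, a constant fraction may be isolated when $d = O(1)$), so a naive degree threshold risks removing too many at the low end. The paving structure behind the proof of Theorem~\ref{thm: main} likely supplies the right notion of ``bad'' vertex here, and I would expect the core $J$ of Theorem~\ref{thm: core intro} to be extracted from inside that proof rather than constructed a posteriori.
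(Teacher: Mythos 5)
Your plan runs in the opposite logical direction from the paper, and the two specific devices you invoke to carry it out are not the ones the paper uses; both substitutions leave gaps.

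\textbf{Logical direction.} In the paper, Theorem~\ref{thm: core intro} is a \emph{building block} for Theorem~\ref{thm: main}: the main theorem is proved by combining the core concentration of Theorem~\ref{thm: core intro} with a separate ``residual'' estimate (Theorems~\ref{thm: sparse decomposition intro} and~\ref{thm: residual intro}). Deducing Theorem~\ref{thm: core intro} from Theorem~\ref{thm: main} would therefore require a self-contained, independent proof of Theorem~\ref{thm: main}, which the paper does not provide (and neither do you). You partially sense this in your last paragraph — the core $J$ is indeed ``extracted from inside that proof'' — but the concrete plan you write out still hinges on invoking Theorem~\ref{thm: main} as a black box, which is circular. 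There is also a substantive (not just organizational) obstacle to the reverse direction: with $n\tau\asymp d$, bounding $\|\LL(A)_{J\times J}-\LL(A_\tau)_{J\times J}\|$ entrywise requires every $j\in J$ to satisfy $d_j\gtrsim n\tau\asymp d$, so $J$ must exclude low-degree vertices as well as high-degree ones; you flag this issue but do not resolve it, and it is not at all clear that one can discard all vertices with $d_j\ll d$ and still keep $|J^c|\le n/d$ when $d=O(1)$.

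\textbf{Construction of the core.} Your core is a degree-threshold set $J=\{i: d_i\le C_0 d\}$, with the adjacency bound on $J\times J$ supplied by ``a Feige--Ofek style combinatorial argument.'' This is precisely the prior-work template (Alon--Kahale~\cite{Alon&Kahale1997}, Feige--Ofek~\cite{Feige2005}, Coja-Oghlan--Lanka~\cite{Coja-Oghlan&Lanka2009}) that the paper explicitly states it departs from, and for a reason: those constructions are tailored to $G(n,p)$ or to given-expected-degree models, whereas the theorem here covers general inhomogeneous $G(n,(p_{ij}))$, and it is far from routine that a combinatorial row-sum argument carries over. The paper's core is instead defined implicitly by the weights in Grothendieck's factorization (Theorem~\ref{thm: Grothendieck factorization}): one first shows $\|A-\bar A\|_{\infty\to 1}=O(rn\sqrt{d})$ by Bernstein plus a union bound over $\{\pm1\}^n$ vectors (Lemma~\ref{lem: concentration in cut norm}), then applies Theorem~\ref{thm: Grothendieck intro} to obtain a first core block missing at most $0.1n$ indices (Proposition~\ref{prop: first core block}), and then iteratively pads this block using a block-sensitive version of the cut-norm bound (Lemma~\ref{lem: concentration on blocks in cut norm}, Proposition~\ref{prop: expansion into a block}, Lemma~\ref{lem: expansion if not too large}) until only $n/d$ vertices remain outside. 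Each expansion round costs a $\log d$ factor, which is where the $\log^3 d$ comes from. The Laplacian part (Theorem~\ref{thm: Laplacian on core}) is then a telescoping comparison using the adjacency bound together with a degree-concentration set from Lemma~\ref{lem: degrees on core}, intersecting the two core sets; your three-term decomposition is in the right spirit for that last step but rests on an unavailable core.

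One smaller point: to get $|J^c|\le n/d$ with probability $1-n^{-r}$ you need a Chernoff bound on $|J^c|$ (as in Lemma~\ref{lem: degrees on core}), not just Markov, since Markov only yields failure probability of order $d e^{-cd}$, which need not be $n^{-r}$.
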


We will prove this result in Theorems~\ref{thm: adjacency on core undirected} and \ref{thm: Laplacian on core} below.
Note that concentration of the  Laplacian (part 2) follows easily from concentration of the adjacency matrix (part 1).
This is because most vertices of the graph have degrees $\sim d$, so keeping only such vertices in the core
we can relate the Laplacian to the adjacency matrix as $\LL(A) \approx I - \frac{1}{d} A$.
This makes the deviation of the Laplacian in Theorem~\ref{thm: core intro}
about $d$ times smaller than the deviation of the adjacency matrix.

The rest of this paper is organized as follows.   Section \ref{Sec:outline}   outlines the steps we will take to prove the main Theorem~\ref{thm: main}.   Section \ref{Sec:community} discusses the application of this result to community detection in networks.   The proof is broken up into the following sections:  Section \ref{Sec:Grothendieck} states the Grothendieck's results we will use and applies them to the first core block (which may not yet be as large as we need).   Section \ref{Sec:Expansion} presents an expansion of the core to the required size and proves the adjacency matrix concentrates there.   Section \ref{Sec:DecompResidual} describes a decomposition of the residual of the graph (after extracting the expanded core) that will allow us to control its behavior.   Sections \ref{Sec:LaplacianCore} and \ref{Sec:LaplacianResidual} prove the result for the Laplacian, showing, respectively, that it concentrates on the core and can be controlled on the residual, which completes the proof of the main theorem.   The proof of the corollary for community detection is given in Section \ref{Sec:ConsistencyComDetn}.

\section{Outline of the argument}\label{Sec:outline}
Our approach to proving Theorem~\ref{thm: main} consists of the following two steps.
\begin{enumerate}[\quad 1.]
  \item Remove the few ($n/d$) problematic vertices from the graph.
    On the rest of the graph -- the {\em core} -- the Laplacian concentrates even without regularization,
    by Theorem~\ref{thm: core intro}.

  \item Reattach the problematic vertices -- the {\em residual} -- back to the core, and show that regularization provides enough
    stability so that the concentration is not destroyed.
\end{enumerate}
We will address these two tasks separately.

\subsection{Construction of the core}					\label{s: core}

We start with the first step and outline the proof of the adjacency part of Theorem~\ref{thm: core intro}
(the Laplacian part follows easily, as already noted).
Our construction of the core is based on the following theorem
which combines two results due to Grothendieck, his famous inequality and a factorization theorem.
This result states that the operator norm of a matrix
can be bounded by the $\ell_\infty \to \ell_1$ norm on a large sub-matrix.
This norm is defined for an $m \times k$ matrix $B$ as
\begin{equation}\label{eq: ellinfty to ellone}
  \|B\|_{\infty \to 1} 
= \max_{x \in \{-1,1\}^m, \ y \in \{-1,1\}^k} x^\tran B y.
\end{equation}
This norm is equivalent to the {\em cut norm}, which is more frequently
used in theoretical computer science community (see \cite{Frieze&Kannan1999, Alon&Naor2006, Khot&Naor2012}).

\begin{theorem}[Grothendieck]							\label{thm: Grothendieck intro}
  For every $m \times k$ matrix $B$ and for any $\d>0$, there exists a sub-matrix
  $B_{I \times J}$ with $|I| \ge (1-\d)m$ and $|J| \ge (1-\d)k$ and such that
  $$
  \|B_{I \times J}\| \le \frac{2 \|B\|_{\infty \to 1}}{\d \sqrt{m k}}.
  $$
\end{theorem}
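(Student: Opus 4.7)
The plan is to derive this theorem as a straightforward consequence of the \emph{Grothendieck factorization theorem}, followed by an elementary paving argument based on Markov's inequality.

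First, I would invoke (the real form of) Grothendieck's factorization theorem: for any $m\times k$ real matrix $B$ there exist probability vectors $\mu\in\R^m$ and $\nu\in\R^k$ (nonnegative coordinates summing to $1$) such that, for all $x\in\R^m$ and $y\in\R^k$,
$$
|x^\tran B y|\;\le\; K_G\,\|B\|_{\infty\to 1}\,\Bigl(\textstyle\sum_{i}\mu_i x_i^2\Bigr)^{1/2}\Bigl(\textstyle\sum_{j}\nu_j y_j^2\Bigr)^{1/2},
$$
where $K_G$ is Grothendieck's constant, known to satisfy $K_G<2$ in the real case. This is the content of the Grothendieck machinery (inequality plus factorization through Hilbert space), which is the only nontrivial ingredient; I would either cite it or sketch its proof via SDP duality.

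Second, I would carry out the paving. Set
$$
I=\bigl\{i\in[m]:\mu_i\le 1/(\delta m)\bigr\},\qquad J=\bigl\{j\in[k]:\nu_j\le 1/(\delta k)\bigr\}.
$$
By Markov's inequality, $|[m]\setminus I|\le \delta m$ and $|[k]\setminus J|\le \delta k$, so $|I|\ge(1-\delta)m$ and $|J|\ge(1-\delta)k$. Now for any unit vector $x\in\R^m$ supported on $I$ we have $\sum_i\mu_i x_i^2 \le (\delta m)^{-1}\sum_{i\in I}x_i^2\le (\delta m)^{-1}$, and similarly $\sum_j\nu_j y_j^2\le(\delta k)^{-1}$ for unit $y$ supported on $J$. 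Substituting into the factorization bound gives
$$
|x^\tran B y|\;\le\; \frac{K_G\|B\|_{\infty\to 1}}{\delta\sqrt{mk}}\;\le\;\frac{2\|B\|_{\infty\to 1}}{\delta\sqrt{mk}},
$$
and taking the supremum over such $x,y$ yields the claimed bound on $\|B_{I\times J}\|$.

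The main (and really the only) obstacle is the Grothendieck factorization step; everything else is a one-line Markov argument on the probability vectors produced by that theorem. The factorization effectively tells us that even though $\|B\|_{\infty\to 1}/\sqrt{mk}$ does not in general control $\|B\|$, it does control a reweighted operator norm in which each row is weighted by $\mu_i$ and each column by $\nu_j$; then passing to rows and columns of small weight via Markov removes the reweighting at the cost of losing $\delta m$ rows and $\delta k$ columns, which is precisely the trade-off recorded in the statement.
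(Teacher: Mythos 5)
Your proof is correct and takes essentially the same route as the paper: combine Grothendieck's inequality with Grothendieck's factorization to obtain the weighted bound $\|D_\mu^{-1/2}BD_{\mu'}^{-1/2}\|\le K_G\|B\|_{\infty\to 1}$ (equivalently, the reweighted bilinear estimate you wrote), then apply Markov's inequality to the probability weights $\mu,\nu$ to discard the at most $\delta m$ rows and $\delta k$ columns of large weight. Your phrasing is in fact cleaner than the paper's, which (apparently through a typo) states the thresholding inequalities in the wrong direction before using them correctly.
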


We will deduce and discuss this theorem in Section~\ref{s: Grothendieck theorems}.
The $\ell_\infty \to \ell_1$ norm is simpler to deal with than the operator norm, since
the maximum of the quadratic form in \eqref{eq: ellinfty to ellone} is taken with respect to vectors $x,y$
whose coordinates are all  $\pm 1$.
This can be helpful when $B$ is a random matrix. Indeed, for $B = A - \bar{A}$,
one can first use standard concentration inequalities (Bernstein's)
to control $x^\tran B y$ for fixed $x$ and $y$, and afterwards apply the union bound
over the $2^{m+k}$ possible choices of $x$, $y$.
This simple argument shows that, while concentration fails in the operator norm,
{\em adjacency matrices of sparse graphs concentrate in the $\ell_\infty \to \ell_1$ norm}:
\begin{equation}         \label{eq: cut norm random matrix intro}
\|A - \bar{A}\|_{\infty \to 1} = O(n \sqrt{d})  \quad \text{with high probability}.
\end{equation}
To see this is a concentration result, note that for large $d$ the right hand side is much smaller than
$\|\bar{A}\|_{\infty \to 1}$, which is of order $nd$.
This fact was observed in \cite{Guedon&Vershynin2014}, and we include the proof in this paper
as Lemma~\ref{lem: concentration in cut norm}.

Next, applying Grothendieck's Theorem~\ref{thm: Grothendieck intro} with $m=k=n$ and $\d=1/20$,
we obtain a subset $J_1$ which contains all but $0.1 n$ vertices, on which the
adjacency matrix concentrates:
\begin{equation}         \label{eq: first core block intro}
\|(A - \bar{A})_{J_1 \times J_1}\| = O(\sqrt{d}).
\end{equation}
Again, to understand this as concentration, note that for large $d$ the right hand side
is much smaller than $\|\bar{A}_{J_1 \times J_1}\|$, which is of order $d$.

We obtained the concentration inequality claimed in the adjacency part of Theorem~\ref{thm: core intro},
but with a core that may not be as large as we claimed. Our next goal is to reduce the number of residual vertices
from $0.1 n$ to $n/d$. To expand the core, we continue to apply the argument above to the
remainder of the matrix, thus obtaining new core blocks.
We repeat this process until the core becomes as large as required.
At the end, all the core blocks constructed this way are combined using the triangle inequality,
at the small cost of a factor polylogarithmic in $d$.

\subsection{Controlling the regularized Laplacian on the residual}

The second step is to show that regularized Laplacian $\LL(A_\tau)$
is stable  with respect to adding a few vertices.
We will quickly deduce such stability from the following sparse decomposition of the adjacency matrix.
\begin{theorem}[Sparse decomposition]		\label{thm: sparse decomposition intro}			
  In the setting of Theorem~\ref{thm: main}, we can decompose any sub-matrix $A_{I \times J}$
  with at most $n/d$ rows or columns into two matrices with disjoint support,
  $$
  A = A_\CC + A_\RR,
  $$
  in such a way that each row of $A_\RR$ and each column of $A_\CC$ will have
  at most $10 r \log r$ entries that equal $1$.
\end{theorem}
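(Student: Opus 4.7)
The plan is to interpret $A_{I \times J}$ as the biadjacency matrix of a bipartite graph on sides $I$ and $J$, and to cast the required decomposition as a bipartite edge-partition problem with row- and column-degree constraints. Set $t = 10 r \log r$; after swapping the roles of $I$ and $J$ if necessary, assume $|I| \le n/d$. The task is to split the $1$'s of $A_{I \times J}$ between $A_\CC$ and $A_\RR$ so that every row of $A_\RR$ and every column of $A_\CC$ contains at most $t$ of them.

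My first attempt would be the elementary column-threshold construction: let $c_j = \sum_{i \in I} A_{ij}$ be the column sum inside the submatrix, and assign the entry $(i,j)$ (with $A_{ij}=1$) to $A_\CC$ when $c_j \le t$, and to $A_\RR$ otherwise. The column constraint on $A_\CC$ is then automatic: columns with $c_j \le t$ have at most $t$ ones, while columns with $c_j > t$ contribute nothing to $A_\CC$. Everything therefore reduces to controlling the row sums of $A_\RR$, namely the random variable
\[
Y_i \;=\; \sum_{j \in J} A_{ij}\, \mathbf{1}\{c_j > t\}.
\]
Since $c_j - A_{ij} = \sum_{i' \ne i} A_{i'j}$ does not depend on $A_{ij}$, the summands defining $Y_i$ are independent Bernoullis indexed by $j$. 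The assumption $|I| \le n/d$ together with $p_{i'j} \le d/n$ forces $\sum_{i' \ne i} p_{i'j} \le 1$, so a first Chernoff bound yields $\P{c_j > t \mid A_{ij}=1} \le (e/t)^t$, hence $\E Y_i \le d(e/t)^t$. A second Chernoff bound applied to $Y_i$ itself then produces an upper tail of order $\exp\bigl(-c\, t^2 \log t\bigr)$, and a union bound over the at most $n$ rows brings the failure probability below $n^{-r}$ once $t = 10 r \log r$ with suitable absolute constants.

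The main obstacle I anticipate lies in robustly matching the promised bound $10 r \log r$ with the Chernoff/union-bound calibration, and in handling the regime where $d$ is not small compared to $t$. In that regime the column-threshold construction is too crude, and I would instead formulate the problem through the Hall-type criterion \emph{the desired partition exists iff $|\{(i,j) \in S \times T : A_{ij}=1\}| \le t(|S|+|T|)$ for every $S \subseteq I,\ T \subseteq J$} (by max-flow/min-cut, exploiting total unimodularity of the underlying bipartite constraint matrix). Verifying this condition uniformly splits naturally into an easy range $\min(|S|,|T|) \le 2t$ (where the deterministic bound $|A_{S \times T}| \le |S||T| \le 2t(|S|+|T|)$ already suffices) and a bulk range handled by Chernoff on $|A_{S \times T}|$ combined with a union bound of combinatorial weight $\binom{|I|}{s}\binom{|J|}{\tau}$. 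Throughout, the structural input driving the exponential gain is the constraint $|I| \le n/d$, which forces $\E[c_j] \lesssim 1$ and thereby makes heavy columns sufficiently rare to beat the combinatorial entropy of the union bound.
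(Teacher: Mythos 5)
Your column-threshold construction has a gap that you do not flag, and it is the central difficulty of this lemma: the theorem must hold \emph{simultaneously for every} sub-matrix $A_{I\times J}$ with $\min(|I|,|J|)\le n/d$, because in the proof of Theorem~\ref{thm: main} the residual block to which it is applied is $J^c\times[n]$ for a core $J$ that itself depends on the random matrix $A$. The column sums $c_j=\sum_{i\in I}A_{ij}$ driving your threshold depend on $I$, so before you can declare success you must union-bound over the $\binom{n}{m}$ choices of $I$ with $|I|=m$, not merely over the $n$ rows. For $m$ near $n/d$ one has $\mu=md/n\approx 1$, so the tail you compute for $Y_i$, of order $\exp(-c\,t^2\log t)$, is a constant in $n$; it cannot beat $\binom{n}{n/d}=\exp\bigl(\Theta((n/d)\log d)\bigr)$ for any $t$ that stays bounded in terms of $r$ and $d$. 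This is precisely where the paper's argument is structured differently: it does not build the partition in one shot but \emph{iteratively removes a single sparse row or column}, and the probability that \emph{every} row of a fixed $m\times m$ sub-matrix is dense is $p^m$ with $p\le(3n/m)^{-5r}$ (see the proof of Lemma~\ref{lem: sparse row}). That exponent $m$ is exactly what cancels the $\binom{n}{m}^2\le(en/m)^{2m}$ entropy; a single global thresholding event has no such exponent. (Also, the bound $10r\log r$ in the statement appears to be a typo for $10r\log d$, which is what Theorems~\ref{thm: decomposition of residual} and~\ref{thm: decomposition of residual undirected} actually prove and what the downstream Theorem~\ref{thm: Laplacian on residual} uses.)

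Your fallback via the bipartite-flow cut condition $|A_{S\times T}|\le t(|S|+|T|)$ is a genuinely different and more robust route than the paper's greedy peeling, and the criterion itself is correct by max-flow/min-cut. Two adjustments are needed: the deterministic easy range should be $\min(|S|,|T|)\le t$ (giving $|A_{S\times T}|\le t\max(|S|,|T|)\le t(|S|+|T|)$, not $2t(|S|+|T|)$), and the union bound must range over all $S\subseteq[n]$ with $|S|\le n/d$ and all $T\subseteq[n]$ (weight $\binom{n}{s}\binom{n}{\tau}$, not $\binom{|I|}{s}\binom{|J|}{\tau}$), since the cut inequality for a pair $(S,T)$ is independent of the enclosing $I\times J$ and verifying it once handles every enclosing sub-matrix. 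With that, Chernoff does give a tail $(e\,\E/a)^a$ with $a=t(s+\tau)$, whose exponent scales with the cut size and plays the role of the paper's $p^m$; a calibration in the spirit of the paper's ``$2ab\ge a+b$'' trick should close it. You would also need to handle symmetric $A$, where entries within a column of a sub-matrix are not all independent; the paper does this by splitting $A=A^++A^-$ along the diagonal and applying the directed result to each half.
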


We will obtain a slightly more informative version of this result
as Theorem~\ref{thm: decomposition of residual undirected}.
The proof is not difficult.
Indeed, using a standard concentration argument it is possible to show that there exists
at least one sparse row or column of $A_{I \times J}$.
Then we can iterate the process -- remove this row or column and
find another one from the smaller sub-matrix, etc.
The removed rows and columns form the $A_\RR$ and $A_\CC$, respectively.

To use Theorem~\ref{thm: sparse decomposition intro} for our purpose,
it would be easier to drop the identity from the
definition of the Laplacian. Thus we consider the averaging operator
\begin{equation}         \label{eq: averaging operator}
L(A) := I - \LL(A) = D^{-1/2} A D^{-1/2},
\end{equation}
which is occasionally also called the Laplacian.
We show that the regularized averaging operator is small
(in the operator norm) on all sub-matrices with small dimensions.

\begin{theorem}[Residual]				\label{thm: residual intro}
  In the setting of Theorem~\ref{thm: main},
  any sub-matrix $L(A_\tau)_{I \times J}$ with at most $n/d$ rows or columns satisfies
  $$
  \|L(A_\tau)_{I \times J}\| \le \frac{2}{\sqrt{d}} + \frac{\sqrt{10 r \log d}}{\sqrt{n \tau}}
  \quad \text{for any } \tau > 0.
  $$
\end{theorem}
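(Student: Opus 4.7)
\medskip

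\noindent\textbf{Proof proposal.}
The plan is to split the regularized adjacency matrix additively and then sandwich each piece between the square roots of the regularized degree matrix. Write $D_\tau := \diag(d_i + n\tau)$; since every diagonal entry is at least $n\tau$, we have $\|(D_\tau^{-1/2})\|_{\infty\to\infty}\le 1/\sqrt{n\tau}$, which is the only quantitative feature of the regularization we will use. On the block $I\times J$,
\[
(A_\tau)_{I\times J} = A_{I\times J} + \tau\, \onevector_I \onevector_J^\tran,
\]
and applying Theorem~\ref{thm: sparse decomposition intro} to $A_{I\times J}$ gives a further decomposition $A_{I\times J}=A_\CC+A_\RR$, where $A_\CC$ has at most $k:=O(r\log d)$ ones per column and $A_\RR$ at most $k$ ones per row. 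Sandwiching produces three summands $M_\tau,M_\CC,M_\RR$ whose norms I will bound separately and combine with the triangle inequality.

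For the rank-one piece $M_\tau = \tau\,(D_\tau^{-1/2})_I\onevector_I\onevector_J^\tran(D_\tau^{-1/2})_J$, the norm factors as $\tau\cdot\|(D_\tau^{-1/2})_I\onevector_I\|_2\cdot\|(D_\tau^{-1/2})_J\onevector_J\|_2$. Each factor is at most $\sqrt{|I|/(n\tau)}$ or $\sqrt{|J|/(n\tau)}$, so the product is $\sqrt{|I||J|}/n$. Since either $|I|\le n/d$ or $|J|\le n/d$ and the other is at most $n$, this yields $\|M_\tau\|\le 1/\sqrt{d}$.

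For $M_\CC=(D_\tau^{-1/2})_I A_\CC (D_\tau^{-1/2})_J$ I will \emph{not} use a Schur test directly (it fails because we only control column sparsity of $A_\CC$, not row sparsity). Instead, for any unit vector $y$, write
\[
\|M_\CC y\|_2^2 = \sum_{i\in I} \frac{1}{d_i+n\tau}\Bigl[\sum_{j\in J}(A_\CC)_{ij}\,\frac{y_j}{\sqrt{d_j+n\tau}}\Bigr]^2,
\]
apply Cauchy--Schwarz to the inner sum against the counting measure on $\{j:(A_\CC)_{ij}=1\}$ (whose row-cardinality is at most $d_i$), and use $d_i/(d_i+n\tau)\le 1$ to absorb the prefactor. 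This reduces the expression to $\sum_{j\in J}\frac{y_j^2}{d_j+n\tau}\sum_{i\in I}(A_\CC)_{ij}$; column sparsity bounds the inner sum by $k$, and $d_j+n\tau\ge n\tau$ then gives $\|M_\CC\|^2\le k/(n\tau)$. Applying the symmetric argument to $M_\RR^\tran$ using row sparsity yields the same bound $\|M_\RR\|\le\sqrt{k/(n\tau)}$.

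Adding the three norm bounds gives $\|L(A_\tau)_{I\times J}\|\le 1/\sqrt{d}+2\sqrt{k/(n\tau)}$, matching the stated estimate after absorbing constants into $k=10 r\log d$. The main obstacle, and where the real work lies, is the $M_\CC$ bound: standard norm inequalities leveraging the operator norm~$1$ of $L(A_\tau)$ or Schur testing both break because we only have one-sided (column) sparsity on $A_\CC$. The right move is the row-by-row Cauchy--Schwarz above, which spends exactly the column sparsity and exactly one factor of $1/\sqrt{n\tau}$ from the regularization, which is precisely what the theorem claims.
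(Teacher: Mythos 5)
Your proof is correct, but it takes a genuinely different route from the paper's. The paper proves this statement (as Theorem~\ref{thm: Laplacian on residual}) by first establishing a general Restriction Lemma (Lemma~\ref{lem: restriction Laplacian}): if the row sums of $B_\CC$ are at most $\e$ times the row sums of $B$, then $\|(L(B))_\CC\|\le\sqrt{\e}$. That lemma is proved by entrywise domination for non-negative matrices together with a block-symmetrization trick that shows the ``self-normalized'' matrix $\tilde L(B_\CC)=D_r^{-1/2}B_\CC D_c^{-1/2}$ has norm at most $1$. The paper then applies the lemma directly to $B=A_\tau$ restricted to $\RR$ (and $\CC$), with $\e = 1/d + O(r\log d)/(n\tau)$ read off from the sparse decomposition; the regularization term $\tau\onevector\onevector^\tran$ is kept glued inside $A_\tau$ throughout. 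You bypass the Restriction Lemma entirely: you peel off the rank-one regularization term first, bound it by an explicit rank-one norm computation, and then handle $A_\CC$ and $A_\RR$ with a direct row-by-row weighted Cauchy--Schwarz that spends the degree bound $\sum_j(A_\CC)_{ij}\le d_i$ to absorb one $D_\tau^{-1/2}$ factor and the column sparsity $\sum_i(A_\CC)_{ij}\le k$ plus the other $D_\tau^{-1/2}$ for the final bound. Your approach is arguably more elementary and self-contained (no auxiliary lemma, no block-matrix norm fact about Laplacians), at the cost of losing the clean reusable interface that Lemma~\ref{lem: restriction Laplacian} provides -- the paper also invokes that lemma for Lemma~\ref{lem: Laplacian expected on residual}. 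Your final constants come out as $1/\sqrt{d}+2\sqrt{10r\log d}/\sqrt{n\tau}$, which differs slightly from the displayed statement but is in fact no worse than what the paper's own proof of Theorem~\ref{thm: Laplacian on residual} delivers ($2/\sqrt{d}+\sqrt{40r\log d}/\sqrt{n\tau}$), so this is only a cosmetic discrepancy.
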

We will prove this result as Theorem~\ref{thm: Laplacian on residual} below.   The proof is based on the sparse decomposition constructed
in Theorem~\ref{thm: sparse decomposition intro} and proceeds as follows.
It is enough to bound the norm of $L(A_\tau)_\RR$.
By definition \eqref{eq: averaging operator}, $L(A)$ normalizes each entry of $A$ by the sums of entries in its
row and column. It is not difficult to see that Laplacians must scale accordingly, namely
\begin{equation}         \label{eq: Laplacian ratio}
\|L(A_\tau)_\RR\| \le \sqrt{\e} \, \|L(A_\tau)_{I \times J}\|
\end{equation}
if the sum of entries in each column of $L(A_\tau)_\RR$ is at most $\e$ times smaller
than the corresponding sum for $L(A_\tau)$.
Let us assume that $L(A_\tau)_{I \times J}$ has $n/d$ rows.
The sum of entries of each column of $L(A_\tau)_\RR$ is at most $n \tau/d + 10 r \log d$.
(The first term here comes from adding the regularization parameter $\tau$ to each of $n/d$ entries of the column,
and the second term comes from the sparsity of $\RR$.) The sum of entries of each column of
$L(A_\tau)_{I \times J}$ is at least $n \tau$ due to regularization.
Substituting this into \eqref{eq: Laplacian ratio}, we obtain
$$
\|L(A_\tau)_\RR\|
\le \sqrt{ \frac{n \tau/d + 10 r \log d}{n \tau} } \, \|L(A_\tau)_{I \times J}\|.
$$
Since the norm of $L$ is always bounded by $1$, this leads to the conclusion of
Theorem~\ref{thm: residual intro}.

\medskip

Finally, Theorem~\ref{thm: main} follows by combining the core part
(Theorem~\ref{thm: core intro}) with the residual part (Theorem~\ref{thm: residual intro}).
To do this, we decompose the part of the Laplacian outside the core $J \times J$
into two residual matrices, one on $J^c \times [n]$ and another on $J \times J^c$.
We use that the regularized Laplacian concentrates on the core and is small on each of the
residual matrices. Combining these bounds by triangle inequality, we obtain Theorem~\ref{thm: main}.

\section{Community detection in sparse networks}\label{Sec:community}

\subsection{Stochastic models of complex networks}

Concentration results for random graphs have remarkable implications
for network analysis, specifically for understanding the behavior of
spectral clustering applied in the community detection problem.
Real world networks are often modelled as random graphs, and finding
{\em communities} -- groups of nodes that behave similarly to each other.  Most of the
models proposed for modeling communities to date are special cases of
the inhomogeneous Erd\"os-R\'enyi model, which we discussed in
Section~\ref{subsec:MainResult}.  In particular, the {\em stochastic
  block model} \cite{Holland83}  assigns one of $K$ possible community (block)
labels to each node $i$, which we will call $c_i \in \{1, \dots, K\}$,
and then assumes that the probability of an edge $p_{ij} = B_{c_i
  c_j}$, where $B$ is a symmetric $K \times K$ matrix containing the
probabilities of edges within and between communities.

For simplicity of presentation, we focus on the simplest version of
the stochastic block model, also known as the balanced planted
partition model, which assumes $K = 2$, $B_{11} = B_{22} = p$, $B_{12}
= q$, and the two communities contain the same number of nodes (we
assume that $n$ is an even number and split the set of vertices into
two equal parts $\CC_1$ and $\CC_2$).   We further assume that
$p > q$, and thus on average there are more edges
within communities than between them.  (This is a called an assortative
network model;  the disassortative case $p < q$ can in principle be treated
similarly but we will not consider it here).    We call this model of
random graphs $G(n, p, q)$.

\subsection{The community detection problem}

The community detection problem is to recover the node labels $c_i$,
$i = 1, \dots, n$
from a single realization of the random graph model, in our case $G(n,
p, q)$, or in more common notation, $G(n, \frac{a}{n}, \frac{b}{n})$.
A large literature exists on both the detection algorithms and the
theoretical results establishing when detection is possible, with the
latter mostly confined to the simplest $G(n, \frac{a}{n},
\frac{b}{n})$ model.   A conjecture was made in the physics literature
\cite{Decelle.et.al.2011} and rigorous results established in a series of papers by Mossel,
Neeman and Sly, as well as independently by two other groups -- see
\cite{Mossel&Neeman&Sly2014, Mossel&Neeman&Sly2014a,
  Mossel&Neeman&SlyOnConsistencyThresholds2014,
  Abbe&Bandeira&Hall2014, Massoulie:2014}.  It is now known that no method can do better than random guessing unless
$$
(a-b)^2 > 2 (a+b) .
$$
Further, weak consistency (fraction of mislabelled nodes going to 0 with high probability) is achievable if and only if $(a-b)^2/(a+b) \rightarrow \infty$, and strong consistency, or exact recovery (labelling {\em all} nodes correctly with high probability) requires a stronger necessary and sufficient condition given by \cite{Mossel&Neeman&SlyOnConsistencyThresholds2014} in terms of certain binomial probabilities, which is satisfied when the average expected degree $\frac{1}{2}(a+b)$ is of order $\log n$ or larger, and $a$ and $b$ are sufficiently separated.   Most existing results on community detection are obtained in the latter regime, showing exact recovery is possible when the degree grows faster than $\log n$ -- see e.g., \cite{McS01,Bickel&Chen2009}.


There are very few existing results about community detection on sparse graphs with bounded average degrees.   Consistency is no longer possible, but one can still hope to do better than random guessing above the detection threshold.  A (quite complicated) adaptive spectral algorithm by Coja-Oghlan \cite{Coj10}
achieves community detection if
$$
(a-b)^2 \ge C (a+b) \log (a+b)
$$
for a sufficiently large constant $C$.     Recently, two other spectral algorithms based on non-backtracking random walks
were proposed by Mossel, Neeman and Sly \cite{Mossel&Neeman&Sly2014}
and Massouile \cite{Massoulie:2014}, which perform detection better than random guessing (fraction of misclassified vertices is bounded away from $0.5$ as $n \to \infty$ with high probability) as long as
\begin{equation}         \label{eq: a-b a+b}
(a-b)^2 > C (a+b) \ \mbox{ for } C \ge 2.
\end{equation}
Finally, semi-definite programming approaches to community detection have been discussed and analyzed in the dense regime \cite{Chen2012, Cai2014, Amini&Levina2014}, and very recently Gu\'{e}don and Vershynin \cite{Guedon&Vershynin2014} proved that they achieve community detection in the sparse regime under
the same condition \eqref{eq: a-b a+b}, also using Grothendieck's results.

\subsection{Regularized spectral clustering in the sparse regime}

As an application of the new concentration results, we show that {\em regularized spectral clustering} \cite{Amini.et.al.2013}  can be used for community detection under the $G(n, \frac{a}{n}, \frac{b}{n})$ model in the sparse regime.   Strictly speaking, regularized spectral clustering is performed by first computing the leading $K$ eigenvectors and then applying the $K$-means clustering algorithm to estimate node labels, but since we are focusing on the case $K = 2$, we will simply show that the signs of the elements of the eigenvector corresponding to the second smallest eigenvalue
(under the $G(n, \frac{a}{n}, \frac{b}{n})$ model the eigenvector corresponding to the smallest eigenvalue 0 does not contain information about the community structure) match the partition into communities with high probability.   Passing from a concentration result on the Laplacian to a result about $K$-means clustering on its eigenvectors can be done by standard tools such as those used in \cite{Qin&Rohe2013} and is omitted here.

\begin{corollary}[Community detection in sparse graphs] \label{Cor:CommunityDetection}
Let $\e\in(0,1)$ and $r\ge 1$.	
Let $A$ be the adjacency matrix drawn from the stochastic block model $G(n,\frac{a}{n}, \frac{b}{n})$. Assume that $a>b$, $a\geq e$, and
\begin{equation}\label{eq: Gnab condition}
  (a-b)^2\ge C r^2\e^{-2}(a+b)\log^6a
\end{equation}
for some large constant $C>0$.
Choose $\tau=(d_1+\cdots+d_n)/n^2$, where $d_1,...,d_n$ are degrees of the vertices.
Denote by $v$ and $\bar{v}$ the unit-norm eigenvectors associated to the second smallest eigenvalues of $\LL(A_\tau)$ and $\LL(\bar{A}_\tau)$, respectively. Then with probability at least $1-n^{-r}$, we have
\begin{equation*}\label{Eq:ComDetecEigVecBound}
  \min_{\beta=\pm 1}\|v+\beta\bar{v}\|_2\leq \e.
\end{equation*}
In particular, the signs of the elements of $v$ correctly estimate the partition into the two communities, up to at most $\e n$ misclassified vertices.
\end{corollary}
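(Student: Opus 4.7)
The plan is to combine Theorem~\ref{thm: main} with the Davis--Kahan sine-theta theorem applied to the second-smallest eigenvector of $\LL(\bar A_\tau)$. For the balanced model $G(n,a/n,b/n)$, Assumption~\ref{as: A} holds with $d = a$, $d_0 \asymp (a+b)/2$, and $\a = d/d_0 \leq 2a/(a+b) \leq 2$. The regularization parameter satisfies $n\tau = \tfrac{1}{n}\sum_i d_i = \tfrac{2|E|}{n}$, where $|E|$ is a sum of independent Bernoullis with mean $\asymp n(a+b)/4$, so a standard Chernoff bound yields $n\tau \geq c(a+b)$ on an event of probability at least $1 - n^{-r}$. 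Plugging these parameters into Theorem~\ref{thm: main} gives
\begin{equation*}
  \|\LL(A_\tau) - \LL(\bar A_\tau)\| \;\leq\; \frac{C\,r\log^3 a}{\sqrt{a+b}}
\end{equation*}
on the same event.

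The next step is to describe the spectrum of $\LL(\bar A_\tau)$ explicitly. One has the rank-two decomposition
\begin{equation*}
  \bar A \;=\; \tfrac{a+b}{2n}\,\onevector\onevector^{\tran} + \tfrac{a-b}{2n}\,\bar w\,\bar w^{\tran}, \qquad \bar w \;:=\; \onevector_{\CC_1} - \onevector_{\CC_2},
\end{equation*}
so adding $\tau\,\onevector\onevector^{\tran}$ only shifts the eigenvalue along $\onevector$ and leaves $\bar v := \bar w/\sqrt n$ as an eigenvector of $\bar A_\tau$ with eigenvalue $(a-b)/2$. Because $\bar A_\tau$ has constant row sums $\bar d := (a+b)/2 + n\tau$, one finds $\bar D_\tau = \bar d\cdot I$, and hence $\LL(\bar A_\tau) = I - \bar d^{-1}\bar A_\tau$ has exactly three distinct eigenvalues: $0$, $\lambda_2 := 1 - (a-b)/(2\bar d)$ (simple, with eigenvector $\bar v$), and $1$ (with multiplicity $n-2$). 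On the event where $n\tau \asymp a+b$, a short calculation gives $\lambda_2 \geq 1/2$, so the gap separating $\lambda_2$ from the rest of the spectrum is
\begin{equation*}
  \delta \;=\; \min\bigl(\lambda_2,\,1-\lambda_2\bigr) \;=\; \frac{a-b}{2\bar d} \;\asymp\; \frac{a-b}{a+b}.
\end{equation*}

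Applying the Davis--Kahan sine-theta theorem to the pair $\LL(A_\tau), \LL(\bar A_\tau)$ and the eigenvalue $\lambda_2$ yields
\begin{equation*}
  \min_{\beta = \pm 1} \|v + \beta \bar v\|_2 \;\leq\; \frac{C'\,\|\LL(A_\tau) - \LL(\bar A_\tau)\|}{\delta} \;\leq\; \frac{C''\,r\log^3 a\,\sqrt{a+b}}{a-b},
\end{equation*}
which is at most $\e$ precisely under the hypothesis $(a-b)^2 \geq C r^2 \e^{-2}(a+b)\log^6 a$. For the partition-recovery statement, since $\bar v$ has entries $\pm 1/\sqrt n$ aligned with $\{\CC_1,\CC_2\}$, any coordinate $i$ at which $\sign(v_i)$ disagrees with the correct (sign-adjusted) community label contributes at least $1/n$ to $\|v + \beta \bar v\|_2^2$, so the number of misclassified vertices is at most $\e^2 n \leq \e n$.

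The main technical nuisance, I expect, will be ensuring that Davis--Kahan is applied to the \emph{right} pair of eigenvectors: one must argue that the second smallest eigenvalue of $\LL(A_\tau)$ corresponds, after perturbation, to the partition direction $\bar v$ rather than to the trivial $\onevector$-direction at $0$ or to an eigenvector in the $(n-2)$-dimensional eigenspace at $1$. This follows from Weyl's inequality combined with the operator-norm bound, since under the hypothesis the perturbation is of strictly smaller order than $\delta$, forcing the ordering of eigenvalues to be preserved; the parallel bookkeeping check is the two-sided concentration of $n\tau$ around $(a+b)/2$ on the same $1 - n^{-r}$ event.
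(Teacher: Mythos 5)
Your proof is correct and follows essentially the same route as the paper's: concentrate $n\tau$ near $(a+b)/2$, specialize Theorem~\ref{thm: main} using $d=a$, $\alpha\le 2$, compute the rank-two spectrum of $L(\bar A_\tau)$ explicitly, and apply Davis--Kahan with a spectral gap of order $(a-b)/(a+b)$. The one step you flag as a ``technical nuisance''---justifying that the second eigenvector of $\LL(A_\tau)$ really aligns with $\bar v$ and not the trivial or bulk directions---is exactly what the paper makes precise by constructing disjoint intervals $S$ and $S'$ separated by $\gtrsim\lambda_2$ and invoking the Bhatia form of Davis--Kahan for spectral projectors; your Weyl-plus-gap sketch amounts to the same bookkeeping.
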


Let us briefly explain how Corollary~\ref{Cor:CommunityDetection} follows from the new
concentration results. According to Theorem~\ref{thm: main}
and the standard perturbation results (Davis-Kahan theorem),
the eigenvectors of $\LL(A_\tau)$ approximate the corresponding eigenvectors of $\LL(\bar{A}_\tau)$ and therefore of $\LL(\bar{A})$.
The latter matrix has rank two. The trivial eigenvector of $\LL(\bar{A}_\tau)$ is $\onevector$, with all entries equal to 1. The first non-trivial eigenvector has entries $1$ and $-1$,
and it is constant on each of the two communities. Since we have a good approximation of
that eigenvector,  we can recover the communities.

\begin{remark}[Alternative regularization]

A different natural regularization \cite{Chaudhuri&Chung&Tsiatas2012, Qin&Rohe2013} we briefly mentioned in Section \ref{intro:regularization}
is to add  a constant, say $n \tau$, to the diagonal of the degree matrix $D$ in the definition of the Laplacian rather than to the adjacency matrix $A$.    Thus we have the alternative regularized Laplacian  $I - D_\tau^{-1/2} A D_\tau^{-1/2}$, where $D_\tau = D + n \tau I$.      One can think of this regularization as
{\em adding a few stars} to the graph.   Suppose for simplicity that $n \tau$ is an integer.    It is easy to check that this version of regularized Laplacian can also be obtained as follows:  add $n \tau$ new vertices, connect each of them to all existing vertices, compute the (ordinary) Laplacian of the resulting graph, and restrict it to the original $n$ vertices.    It is straightforward to show a version of Theorem~\ref{thm: main} holds for this regularization as well;  we omit it here out of space considerations.
\end{remark}

\section{Grothendieck's theorem and the first core block}\label{Sec:Grothendieck}
Our arguments will be easier to develop for non-symmetric adjacency matrices, which have all independent entries.
One can think of them as adjacency matrices of {\em directed random graphs}. So most of our analysis will
be concerning directed graphs, but in the end of some sections we will discuss undirected graphs.

We are about to start proving the adjacency part of Theorem~\ref{thm: core intro}, first for directed graphs.
Our final result will be a little stronger, see Theorems~\ref{thm: adjacency on core}
and \ref{thm: adjacency on core undirected} below,
and it will hold under the following weaker assumptions on $A$.

\begin{assumption}[Directed graph, bounded expected average degree]	\label{as: non-symmetric upper}
  $A$ is an $n\times n$ random matrix with independent binary entries,
  and $\E A = (p_{ij})$.
  Let number $d \ge e$ be such that
  $$
  \frac{1}{n} \sum_{i,j=1}^n p_{ij} \le d.
  $$
  In other words, we shall consider a directed random graph whose {\em expected average
  degree is bounded by $d$}.
\end{assumption}

In this section, we construct the first core block --
one that misses $0.1 n$ vertices and on which the adjacency matrix is concentrated
as we explained in Section~\ref{s: core}.
The construction will be based on two Grothendieck's theorems.

\subsection{Grothendieck's theorems}				\label{s: Grothendieck theorems}

Grothendieck's inequality is a fundamental result, which was originally proved in \cite{Grothendieck1953}
and formulated in \cite{Lindenstrauss1968} in the form we are going to use in this paper.
Grothendieck's inequality has found applications in many areas \cite{Alon&Coja-Oghlan&Han&Kang&Schacht2010,Pisier2012,Khot&Naor2012},
and most recently in the analysis of networks \cite{Guedon&Vershynin2014}.

\begin{theorem}[Grothendieck's inequality]	    \label{thm: Grothendieck inequality}
  Consider an $m \times k$ matrix of real numbers $B = (B_{ij})$.
  Assume that for all numbers $s_i, t_i \in \{-1,1\}$, one has
  $$
  \Big| \sum_{i,j} B_{ij} s_i t_j \Big| \le 1
  $$
  Then, for any Hilbert space $H$ and all vectors $x_i, y_i$ in $H$ with norms at most $1$, one has
  $$
  \Big| \sum_{i,j} B_{ij} \ip{x_i}{y_j} \Big| \le K_\Gr.
  $$
\end{theorem}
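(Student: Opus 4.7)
The plan is to prove Grothendieck's inequality by the classical Gaussian rounding argument due to Krivine. By restricting to the span of the finitely many vectors appearing in the sum, I may assume that the Hilbert space $H$ is finite-dimensional, say $H=\R^N$. The strategy is to replace each $x_i$ and $y_j$ by auxiliary vectors $u_i, v_j$ of norm at most $1$ in a (possibly enlarged) Hilbert space so that, for a standard Gaussian vector $g$ in that space, the sign-rounded inner products satisfy
\[
\E\bigl[\sign(\ip{g}{u_i})\,\sign(\ip{g}{v_j})\bigr]=c\,\ip{x_i}{y_j}
\]
for some absolute constant $c>0$. Once such $u_i, v_j$ are constructed, one applies the hypothesis to the $\pm 1$ sequences $s_i=\sign(\ip{g}{u_i})$, $t_j=\sign(\ip{g}{v_j})$ and averages over $g$; this immediately yields $\bigl|\sum_{i,j} B_{ij}\ip{x_i}{y_j}\bigr|\le 1/c$, with $K_\Gr = 1/c$.

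The analytic engine is the Grothendieck identity: for any unit vectors $x,y$ and a standard Gaussian $g$,
\[
\E\bigl[\sign(\ip{g}{x})\,\sign(\ip{g}{y})\bigr]=\frac{2}{\pi}\arcsin(\ip{x}{y}),
\]
which is a short calculation using the joint distribution of $(\ip{g}{x},\ip{g}{y})$ and polar coordinates. Because $\arcsin$ is not linear, one cannot simply take $u_i=x_i$, $v_j=y_j$; Krivine's substitution repairs this. Working in the symmetric Fock space over $H$ (i.e.\ the direct sum of symmetric tensor powers) and using the power series of $\sin$, I construct vectors $u_i,v_j$ in this larger space whose inner products satisfy $\ip{u_i}{v_j}=\sin(c\,\ip{x_i}{y_j})$ for a chosen parameter $c>0$. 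The Grothendieck identity then produces $\tfrac{2}{\pi}\arcsin(\sin(c\,\ip{x_i}{y_j}))=\tfrac{2c}{\pi}\,\ip{x_i}{y_j}$, identifying the constant above as $2c/\pi$.

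The main obstacle is to take $c$ as large as possible while preserving $\|u_i\|,\|v_j\|\le 1$. Since the norms in the Fock space are computed from the absolute values of the Taylor coefficients of $\sin$, which agree with the coefficients of $\sinh$, the norm constraint reduces to $\sinh(c)\le 1$, i.e.\ $c\le \ln(1+\sqrt{2})$. Choosing $c=\ln(1+\sqrt{2})$ then yields the explicit bound $K_\Gr\le \pi/\bigl(2\ln(1+\sqrt{2})\bigr)$. For the downstream use in Theorem~\ref{thm: Grothendieck intro} and beyond only the existence of an absolute constant $K_\Gr$ matters, so no further sharpening of the constant is required.
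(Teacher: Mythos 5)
The paper does not prove Theorem~\ref{thm: Grothendieck inequality}: it states it as a known black box, citing Grothendieck's 1953 paper and Lindenstrauss's 1968 reformulation, and then uses it (together with the factorization theorem) in the proof of Theorem~\ref{thm: Grothendieck}. So your proposal is not replicating or diverging from an argument in the paper --- it is supplying a proof where the paper offers only a citation.

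The proof you give is the classical Krivine argument, and it is essentially correct: the Grothendieck sign identity $\E[\sign\ip{g}{x}\sign\ip{g}{y}]=\tfrac{2}{\pi}\arcsin\ip{x}{y}$, the Fock-space lift producing $\ip{u_i}{v_j}=\sin\bigl(c\ip{x_i}{y_j}\bigr)$ by distributing the Taylor coefficients of $\sin$ across odd symmetric tensor powers, the linearization $\arcsin\circ\sin=\mathrm{id}$ on $[-\pi/2,\pi/2]$, averaging the hypothesis over the Gaussian, and the norm computation reducing to $\sinh(c)\le 1$. This yields $K_\Gr\le\pi/(2\ln(1+\sqrt{2}))$, which matches the constant the paper quotes as the best known bound. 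Two small points worth tightening: (i) the letter $c$ is used both for the constant in the rounding identity $\E[\sign\cdot\sign]=c\ip{x_i}{y_j}$ and for the parameter inside $\sin(c\,\cdot)$; these differ by the factor $2/\pi$, and the final constant is $\pi/(2c)$ in the second sense, not $1/c$ in the first. (ii) The Gaussian sign identity requires \emph{unit} vectors $u_i,v_j$, while you only enforce $\|u_i\|,\|v_j\|\le 1$; you should either first extend each $x_i,y_j$ to a unit vector in a larger space (standard) and then choose $c$ with $\sinh(c)=1$ so that $\|u_i\|=\|v_j\|=1$ exactly, or insert the normalization $\arcsin\ip{u_i/\|u_i\|}{v_j/\|v_j\|}$. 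Neither issue affects correctness once fixed, and for the paper's purposes only the existence of the absolute constant $K_\Gr$ is used.
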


Here $K_\Gr$ is an absolute constant usually called {\em Grothendieck's constant}.
The best value of $K_\Gr$ is still unknown, and the best known bound is
$K_\Gr < \pi / (2 \ln(1+\sqrt{2})) \le 1.783$.

\medskip

It will be useful to formulate Grothendieck's inequality in terms of the $\ell_\infty\rightarrow\ell_1$ norm,
which is defined as
\begin{align}
\|B\|_{\infty \to 1}
&= \max_{\|t\|_\infty \le 1} \|Bt\|_1
= \max_{s \in \{-1,1\}^n, \ t \in \{-1,1\}^k} s^\tran B t \nonumber\\
&= \max_{s \in \{-1,1\}^m, \ t \in \{-1,1\}^k} \sum_{i,j} B_{ij} s_i t_j.    \label{Eq:CutNormDef}
\end{align}

Grothendieck's inequality then states that for any $m \times k$ matrix $B$,
any Hilbert space $H$ and all vectors $x_i, y_i$ in $H$ with norms at most $1$, one has
$$
\Big| \sum_{i,j} B_{ij} \ip{x_i}{y_j} \Big| \le K_\Gr \|B\|_{\infty \to 1}.
$$

\begin{remark}[Cut norm]
  The $\ell_\infty\rightarrow\ell_1$ norm is equivalent to the \textit{cut norm},
  which is often used in theoretical computer science literature
  (see \cite{Alon&Naor2006,Khot&Naor2012}), and
  which is defined as the maximal sum of entries over all sub-matrices of $B$.
  The cut norm is obtained if we allow $s_i$ and $t_j$ in \eqref{Eq:CutNormDef} to take values
  in $\{0,1\}$ as opposed to $\{-1,1\}$.
  When $A$ is the adjacency matrix of a random graph and $\bar{A} = \E A$,
  the cut-norm of $A-\bar{A}$ measures the degree of ``randomness'' of the graph,
  as it controls the {\em fluctuation of the number of edges} that run between
  any two subset of vertices.
\end{remark}

We combine Grothendieck's inequality with another result of A.~Grothendieck
(see \cite{Grothendieck1953,Pisier2012}), which characterizes the matrices $B$
for which $\sum_{i,j} B_{ij} \ip{x_i}{y_j}$ is small for all vectors $x_i, y_i$
with norms at most $1$.

\begin{theorem}[Grothendieck's factorization]					\label{thm: Grothendieck factorization}
  Consider an $m \times k$ matrix of real numbers $B = (B_{ij})$.
  Assume that for any Hilbert space $H$ and all vectors $x_i, y_i$ in $H$ with norms at most $1$,
  one has
  $$
  \Big| \sum_{i,j} B_{ij} \ip{x_i}{y_j} \Big| \le 1.
  $$
  Then there exist positive weights $\mu_i$ and $\mu'_j$ that satisfy
  $\sum_{i=1}^m \mu_i = 1$ and $\sum_{j=1}^k \mu'_j = 1$ and such that
  $$
  \big\| D_\mu^{-1/2} B D_{\mu'}^{-1/2} \big\| \leq 1,
  $$
  where $D_\mu = \diag(\mu_i)$ and $D_{\mu'} = \diag(\mu'_j)$ denote
  the diagonal matrices with the weights on the diagonal.
\end{theorem}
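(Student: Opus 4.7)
The plan is to reduce the claimed factorization to an additive quadratic inequality and then establish the latter via a minimax (equivalently, Hahn--Banach) argument; the Hilbert-space hypothesis will enter only once, at the very end.

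\medskip

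\emph{Reduction step.} First I would unwind the goal. Substituting $a_i = u_i/\sqrt{\mu_i}$ and $b_j = v_j/\sqrt{\mu'_j}$, the inequality $\|D_\mu^{-1/2} B D_{\mu'}^{-1/2}\| \le 1$ is equivalent to
\begin{equation*}
\sum_{i,j} B_{ij}\, a_i b_j \;\le\; \Bigl(\sum_i \mu_i a_i^2\Bigr)^{1/2}\Bigl(\sum_j \mu'_j b_j^2\Bigr)^{1/2} \qquad \text{for all } a \in \R^m,\; b \in \R^k.
\end{equation*}
The scaling $(a, b) \mapsto (ta, b/t)$ combined with AM--GM shows this is equivalent to the additive version
\begin{equation*}
\sum_{i,j} B_{ij}\, a_i b_j \;\le\; \tfrac12 \sum_i \mu_i a_i^2 + \tfrac12 \sum_j \mu'_j b_j^2,
\end{equation*}
and expanding Hilbert-space inner products in an orthonormal basis and applying the scalar bound coordinatewise promotes this to its Hilbert-space form
\begin{equation*}
\sum_{i,j} B_{ij}\, \ip{x_i}{y_j} \;\le\; \tfrac12 \sum_i \mu_i \|x_i\|^2 + \tfrac12 \sum_j \mu'_j \|y_j\|^2, \qquad x_i, y_j \in H.
\end{equation*}
So it suffices to exhibit probability weights $(\mu, \mu')$ for which the last display holds uniformly in $x, y$.

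\medskip

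\emph{Minimax step.} I would then define
\begin{equation*}
K := \Bigl\{(u, v) \in \R_{\ge 0}^m \times \R_{\ge 0}^k : \exists\, x_i, y_j \in H \text{ with } \|x_i\|^2 = u_i,\; \|y_j\|^2 = v_j,\; \sum_{i,j} B_{ij}\ip{x_i}{y_j} \ge 1\Bigr\}
\end{equation*}
and prove the key structural fact that $K$ is convex: if $(u^{(r)}, v^{(r)})$ for $r = 1, 2$ are witnessed by $(x^{(r)}, y^{(r)}) \in H_r$, then the Hilbert-space direct-sum vectors $\sqrt{\lambda}\, x^{(1)} \oplus \sqrt{1-\lambda}\, x^{(2)}$ and $\sqrt{\lambda}\, y^{(1)} \oplus \sqrt{1-\lambda}\, y^{(2)}$ realize the convex combination, since squared norms interpolate affinely while the bilinear form is additive across the summands. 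Applying Sion's minimax theorem to $f(\mu, \mu'; u, v) = \tfrac12(\mu \cdot u + \mu' \cdot v)$ on the compact convex set $\Delta_m \times \Delta_k$ and (a suitable truncation of) $K$ yields
\begin{equation*}
\sup_{(\mu, \mu')}\;\inf_{(u, v) \in K} \tfrac12(\mu \cdot u + \mu' \cdot v) \;=\; \inf_{(u, v) \in K} \tfrac12\bigl(\max_i u_i + \max_j v_j\bigr).
\end{equation*}
The hypothesis enters exactly once, to lower-bound the right-hand side: for $(u, v) \in K$ with witnesses $x_i, y_j$, setting $\alpha = \max_i \sqrt{u_i}$ and $\beta = \max_j \sqrt{v_j}$ and applying the assumption to the unit-norm vectors $x_i/\alpha,\, y_j/\beta$ gives $1 \le \sum B_{ij}\ip{x_i}{y_j} \le \alpha\beta \le \tfrac12(\alpha^2 + \beta^2)$. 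Thus the infimum is $\ge 1$, and the supremum produces $(\mu, \mu') \in \Delta_m \times \Delta_k$ with $\mu \cdot u + \mu' \cdot v \ge 2$ throughout $K$, which is exactly the additive Hilbert-space bound required in the reduction step.

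\medskip

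The main technical subtlety I anticipate is the rigorous application of Sion's theorem: $K$ itself is unbounded, but the infimum of interest is attained on $K \cap \{\max_i u_i + \max_j v_j \le 2\}$ by the lower bound above, so one restricts to this compact (in the closure) region and applies Sion there. The other delicate point is the direct-sum construction that gives convexity of $K$; it is short but crucial, and it is the reason the argument fundamentally requires Hilbert-space vectors rather than scalars (and hence the full Hilbert-space hypothesis, rather than mere $\|\cdot\|_{\infty \to 1}$ boundedness) --- which is precisely what distinguishes Theorem~\ref{thm: Grothendieck factorization} from Theorem~\ref{thm: Grothendieck inequality}.
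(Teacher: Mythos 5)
The paper does not prove this theorem; it simply cites it to Grothendieck (1953) and Pisier (2012). Your argument is in fact the standard proof from those sources: reduce the factorization to an additive quadratic inequality, establish convexity of the relevant set of squared-norm profiles via Hilbert-space direct sums, and close with a minimax (equivalently Hahn--Banach separation) argument, using the hypothesis only to lower-bound $\inf_K \sup_\Delta f$ via the normalization $x_i/\alpha$, $y_j/\beta$. I checked the reduction step, the convexity of $K$, and the lower-bound computation, and they are all correct; the scalar $\Rightarrow$ Hilbert promotion via coordinatewise expansion in an orthonormal basis is also fine. Your concern about the noncompactness of $K$ is, however, a non-issue: Sion's theorem (and its classical variants) requires only \emph{one} of the two sets to be compact, and since $\Delta_m \times \Delta_k$ is compact and $f$ is bilinear, the equality $\sup_{\Delta}\inf_K f = \inf_K \sup_{\Delta} f$ holds directly with no truncation --- indeed, your proposed fix (restricting to $K \cap \{\max u_i + \max v_j \le 2\}$ and then extending) has a gap as written, because establishing the bound on the truncated set does not automatically propagate it to all of $K$ without an extra limiting argument, so it is cleaner and correct to simply invoke the one-sided-compactness form of Sion. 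One small point you omit: the conclusion requires the $\mu_i,\mu'_j$ to be strictly positive so that $D_\mu^{-1/2}$ is defined. For indices whose row (resp.\ column) of $B$ is nonzero this positivity is automatic --- a zero weight would make the additive inequality fail by scaling the corresponding coordinate --- and for zero rows/columns one can perturb the weights by $\epsilon$ and renormalize, so the issue is minor but should be stated.
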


Combining Grothendieck's inequality and factorization, we deduce a result
that allows one to control the usual (operator) norm by the $\ell_\infty \to \ell_1$ norm
on almost all of the matrix. We already mentioned this result as Theorem~\ref{thm: Grothendieck intro}.
Let us recall it again and give a proof.

 \begin{theorem}[Grothendieck]							\label{thm: Grothendieck}
  For every $m \times k$ matrix $B$ and for any $\d>0$, there exists a sub-matrix
  $B_{I \times J}$ with $|I| \ge (1-\d)m$ and $|J| \ge (1-\d)k$ and such that
  $$
  \|B_{I \times J}\| \le \frac{2 \|B\|_{\infty \to 1}}{\d \sqrt{m k}}.
  $$
\end{theorem}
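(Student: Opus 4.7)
The plan is to derive this result as a clean corollary of Theorem~\ref{thm: Grothendieck inequality} (Grothendieck's inequality) followed by Theorem~\ref{thm: Grothendieck factorization} (Grothendieck's factorization), by restricting to rows and columns on which the factorization weights are not too large. By homogeneity we may normalize so that $\|B\|_{\infty\to 1}=1$. Grothendieck's inequality then gives
$$
\Big|\sum_{i,j} B_{ij}\ip{x_i}{y_j}\Big| \le K_\Gr
$$
for all Hilbert space vectors $x_i,y_j$ of norm at most $1$. Feeding $B/K_\Gr$ into Grothendieck's factorization produces probability weights $(\mu_i)_{i=1}^m$ and $(\mu'_j)_{j=1}^k$ such that
$$
\big\|D_\mu^{-1/2}\,B\,D_{\mu'}^{-1/2}\big\|\le K_\Gr.
$$

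The key step is to select $I$ and $J$ as the indices where the weights are at most their average times a factor $1/\delta$. Concretely, I would set
$$
I=\Big\{i:\mu_i\le \tfrac{1}{\delta m}\Big\},\qquad J=\Big\{j:\mu'_j\le \tfrac{1}{\delta k}\Big\}.
$$
A one-line Markov argument (using $\sum_i\mu_i=1$, $\sum_j\mu'_j=1$) shows that the complements have sizes strictly less than $\delta m$ and $\delta k$, so $|I|\ge(1-\delta)m$ and $|J|\ge(1-\delta)k$ as required.

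For the norm bound, I would factor $B_{I\times J}$ through the factorization: writing $B=D_\mu^{1/2}\,(D_\mu^{-1/2}B D_{\mu'}^{-1/2})\,D_{\mu'}^{1/2}$ and restricting to the $I\times J$ block,
$$
B_{I\times J}=(D_\mu^{1/2})_I\,\big(D_\mu^{-1/2}B D_{\mu'}^{-1/2}\big)_{I\times J}\,(D_{\mu'}^{1/2})_J.
$$
The outer diagonal factors have operator norms $\sqrt{\max_{i\in I}\mu_i}\le 1/\sqrt{\delta m}$ and $\sqrt{\max_{j\in J}\mu'_j}\le 1/\sqrt{\delta k}$ by the definitions of $I,J$, while the middle factor is a submatrix of an operator of norm at most $K_\Gr$, hence itself has norm at most $K_\Gr$. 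Multiplying these three bounds,
$$
\|B_{I\times J}\|\le \frac{K_\Gr}{\delta\sqrt{mk}}\le \frac{2}{\delta\sqrt{mk}},
$$
since $K_\Gr<1.783<2$. Undoing the normalization yields the stated inequality.

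There is no genuine obstacle here: the only thing to be careful with is that the factorization inequality uses the operator norm on the full matrix, and we need that passing to a submatrix can only decrease the norm. This follows immediately from the variational characterization $\|M_{I\times J}\|=\sup\{x^\tran My:\supp x\subseteq I,\supp y\subseteq J,\|x\|_2=\|y\|_2=1\}$, which bounds $\|M_{I\times J}\|$ by $\|M\|$. Everything else is a combination of Markov's inequality with the two Grothendieck theorems invoked as black boxes.
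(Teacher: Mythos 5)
Your proof is correct and follows essentially the same route as the paper: normalize, apply Grothendieck's inequality and factorization to get weights, use Markov to select the indices with small weights, and bound $\|B_{I\times J}\|$ by factoring through $D_\mu^{-1/2}BD_{\mu'}^{-1/2}$. (As a side note, the paper's text contains a sign typo, saying the selected weights $\mu_i$ are ``bounded below by $(\delta m)^{-1}$'' when the intended and correct criterion is the one you use, $\mu_i \le (\delta m)^{-1}$.)
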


\begin{proof}
Combining Theorems~\ref{thm: Grothendieck inequality} and \ref{thm: Grothendieck factorization},
we obtain positive weights $\mu_i$ and $\mu'_j$ which sum to $1$ and satisfy
\begin{equation}         \label{eq: Grothendieck diag}
\big\| D_\mu^{-1/2} B D_{\mu'}^{-1/2} \big\| \le K_G\|B\|_{\infty \to 1}.
\end{equation}
Let us choose the set $I$ to contain the indices of the weights $\mu_i$ that are bounded
below by $(\d m)^{-1}$. Since all weights sum to one, $I$ contains at least
$(1-\d)m$ indices as required. Similarly, we define $J$ to contain the indices of the
weights $\mu_i$ that are bounded below by $(\d k)^{-1}$; this set also has the required
cardinality.

By construction, all (diagonal) entries of $D_\mu^{-1/2}$ and $D_{\mu'}^{-1/2}$
are positive and bounded above by $\sqrt{\d m}$ and $\sqrt{\d k}$ respectively.
This implies that
$$
\big\| (D_\mu^{-1/2} B D_{\mu'}^{-1/2})_{\s_1 \times \s_2} \big\|
\ge \sqrt{\delta m}\sqrt{\delta k} \|B_{\sigma_1\times\sigma_2}\|.
$$
On the other hand, by \eqref{eq: Grothendieck diag} the left hand side of this inequality
is bounded above by $K_G \|B\|_{\infty \to 1}$. This completes the proof,
since Grothendieck's constant $K_G$ is bounded by $2$.
\end{proof}

\subsection{Concentration of adjacency matrices in $\ell_\infty \to \ell_1$ norm}

As we explained in Section~\ref{s: dense sparse}, the adjacency
matrices of sparse random graphs do not concentrate {\em in the operator norm}.
Remarkably, concentration can be enforced by switching to
the $\ell_\infty \to \ell_1$ norm. We stated an informal version of this result
in \eqref{eq: cut norm random matrix intro}; now we are ready for a formal statement.
It has been proved in \cite{Guedon&Vershynin2014}; let restate and prove it here for
the reader's convenience.

\begin{lemma}[Concentration of adjacency matrices in $\ell_\infty \to \ell_1$ norm] 			\label{lem: concentration in cut norm}
  Let $A$ be a random matrix satisfying Assumption~\ref{as: non-symmetric upper}.
  Then for any $r\geq 1$ the following holds with probability at least $1-e^{-2rn}$:
  $$
  \|A-\bar{A}\|_{\infty \to 1}
  \le 5 r n \sqrt{d}.
  $$
\end{lemma}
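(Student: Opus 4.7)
The plan is a standard Bernstein-plus-union-bound argument, where the crucial feature of the $\ell_\infty \to \ell_1$ norm is that the maximization is over only $4^n$ sign vectors rather than a continuous ball.

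First I would expand the definition
\[
\|A - \bar A\|_{\infty \to 1} = \max_{s,t \in \{-1,1\}^n} \sum_{i,j} (A_{ij} - p_{ij})\, s_i t_j,
\]
and fix a single pair $(s,t)$. The random variables $X_{ij} := (A_{ij} - p_{ij}) s_i t_j$ are jointly independent, centered, and bounded in absolute value by $1$, and their total variance is controlled by Assumption~\ref{as: non-symmetric upper}:
\[
\sum_{i,j} \E X_{ij}^2 = \sum_{i,j} p_{ij}(1-p_{ij}) \le \sum_{i,j} p_{ij} \le nd.
\]

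Next I would apply Bernstein's inequality in its classical one-sided form: for any $\lambda > 0$,
\[
\P\left\{ \sum_{i,j} X_{ij} \ge \lambda \right\}
\le \exp\!\left( -\frac{\lambda^2/2}{nd + \lambda/3} \right),
\]
and set $\lambda := 5 r n \sqrt{d}$. The work is then to check that for every $d \ge e$ and $r \ge 1$, the Bernstein exponent dominates the log-size of the union, namely
\[
\frac{\lambda^2/2}{nd + \lambda/3} \;\ge\; 2rn + 2n\log 2 + \log 2.
\]
I would split into the two natural cases. When $nd \ge \lambda/3$ (equivalently $\sqrt{d} \ge 5r/3$), the denominator is at most $2nd$ and the exponent is $\ge \tfrac{25}{4} r^2 n$. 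When $nd < \lambda/3$, the denominator is at most $2\lambda/3 = \tfrac{10}{3} rn\sqrt{d}$ and the exponent is $\ge \tfrac{15}{4} rn\sqrt{d} \ge \tfrac{15}{4} rn \sqrt{e}$. In both cases the bound comfortably exceeds $4rn$, which is more than enough.

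Finally, I would take a union bound over the $4^n$ pairs $(s,t) \in \{-1,1\}^n \times \{-1,1\}^n$ and the two signs from removing the absolute value:
\[
\P\!\left\{ \|A-\bar A\|_{\infty \to 1} \ge 5rn\sqrt d \right\}
\le 2 \cdot 4^n \cdot \exp\!\left( -\tfrac{\lambda^2/2}{nd+\lambda/3} \right)
\le e^{-2rn},
\]
which is precisely the conclusion. There is no real obstacle here; the only thing to be careful about is keeping track of constants in the case analysis so that a single choice of $\lambda = 5rn\sqrt d$ handles both the ``variance-dominated'' and ``range-dominated'' regimes of Bernstein simultaneously.
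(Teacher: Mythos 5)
Your proof is correct and follows essentially the same route as the paper's: expand the $\ell_\infty\to\ell_1$ norm as a maximum over the $4^n$ pairs of sign vectors, apply Bernstein's inequality to the fixed-pair sum using the total variance bound $nd$, and take a union bound, with the same choice $\lambda = 5rn\sqrt{d}$. The only cosmetic difference is the extra factor of $2$ for the sign of the sum, which is not needed (negating $s$ already realizes both signs in the maximum) but is harmless.
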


\begin{proof}
By definition,
\begin{equation}				\label{eq: cut norm as max}
   \|A-\bar{A}\|_{\infty \to 1}
  =\max_{x,y\in\{-1,1\}^n} \sum_{i,j=1}^n(A_{i,j}-\bar{A}_{i,j})x_iy_j.
\end{equation}
For a fixed pair $x,y$, the terms $X_{ij}:=(A_{i,j}-\bar{A}_{i,j}) x_i y_j$ are independent random variables.
So we can use Bernstein's inequality (see Theorem~2.10 in \cite{Boucheron&Lugosi&Massart2013}) to control the sum
$\sum_{i,j=1}^nX_{ij}$. There are $n^2$ terms here, all of them are bounded in absolute value by one,
and their average variance is at most $d/n$.
Therefore by Bernstein's inequality, for any $t>0$ we have
\begin{eqnarray}				\label{eq: sum Xij bounded}
  \Pr{ \sum_{i,j=1}^n X_{ij} > t n^2 } \leq \exp\left(-\frac{n^2t^2/2}{d/n+t/3}\right).
\end{eqnarray}
It is easy to check that this is further bounded by $e^{-4rn}$
if we choose $t = 5 r \sqrt{d} / n$. Thus, taking a union bound over $4^n$ choices of pairs $x,y$
and using \eqref{eq: cut norm as max} and \eqref{eq: sum Xij bounded}, we obtain that
\begin{equation}\label{Eq:UnifErrCutNorm3}
\|A-\bar{A}\|_{\infty \to 1} \le t n^2 = 5 r n \sqrt{d}
\end{equation}
with probability at least $1 - 4^n \cdot e^{-4rn} \ge 1 - e^{-2rn}$.
The lemma is proved.
\end{proof}

\begin{remark}[Concentration]
  To better understand Lemma~\ref{lem: concentration in cut norm} as a concentration result,
  note that $\|\bar{A}\|_{\infty \to 1} = nd$ if $d$ is the average expected degree of the graph
  (that is, $d = \frac{1}{n} \sum_{i,j=1}^n p_{ij}$). Then the conclusion of
  Lemma~\ref{lem: concentration in cut norm}
  can then be stated as
  $$
  \|A-\bar{A}\|_{\infty \to 1}
  \le \frac{7r}{\sqrt{d}} \, \|\bar{A}\|_{\infty \to 1}.
  $$
  For large $d$, this means that $A$ concentrates near its mean in $\ell_\infty \to \ell_1$ norm.
\end{remark}

\subsection{Construction of the first core block}				\label{s: first core block}

We can now quickly deduce the existence of the first core block -- the one on which
the adjacency matrix concentrates in the operator norm, as we outlined in \eqref{eq: first core block intro}.

To do this, we first apply Lemma~\ref{lem: concentration in cut norm}, then
use Grothendieck's Theorem~\ref{thm: Grothendieck} for $m=k=n$ and $\d=1/20$,
and finally we intersect the subsets $I$ and $J$. We conclude the following.

\begin{proposition}[First core block]				\label{prop: first core block}
  Let $A$ be a matrix satisfying the conclusion of
  Concentration Lemma~\ref{lem: concentration in cut norm}.
  There exist a subset $J_1 \subseteq [n]$ which contains all but at most $0.1 n$ indices,
  and such that
  $$
  \|(A - \bar{A})_{J_1 \times J_1}\| \le C r \sqrt{d}.
  $$
\end{proposition}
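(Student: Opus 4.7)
The plan is essentially spelled out in the outline preceding the statement: combine Lemma~\ref{lem: concentration in cut norm} (concentration in the $\ell_\infty \to \ell_1$ norm) with Theorem~\ref{thm: Grothendieck} (the operator-norm-on-a-submatrix consequence of Grothendieck's inequality and factorization), and then symmetrize the resulting rectangular core by intersecting the row and column index sets.

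First, I would invoke Lemma~\ref{lem: concentration in cut norm}, which is exactly the hypothesis of the proposition: it yields
$$
\|A - \bar{A}\|_{\infty \to 1} \le 5 r n \sqrt{d}.
$$
Next, I apply Theorem~\ref{thm: Grothendieck} to the matrix $B = A - \bar{A}$ with $m = k = n$ and the choice $\delta = 1/20$. This produces index sets $I, J \subseteq [n]$ with $|I|, |J| \ge (1-\delta)n = 0.95 n$ and
$$
\|(A - \bar{A})_{I \times J}\| \;\le\; \frac{2\|A - \bar{A}\|_{\infty \to 1}}{\delta \sqrt{mk}} \;\le\; \frac{2 \cdot 5 r n \sqrt{d}}{(1/20) \, n} \;=\; 200 \, r \sqrt{d}.
$$

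Because the proposition asks for a \emph{symmetric} core $J_1 \times J_1$, the final step is to set $J_1 := I \cap J$. By inclusion–exclusion,
$$
|J_1| \;=\; |I| + |J| - |I \cup J| \;\ge\; 0.95 n + 0.95 n - n \;=\; 0.9 n,
$$
so $J_1$ misses at most $0.1 n$ indices as required. Since $(A - \bar{A})_{J_1 \times J_1}$ is a principal submatrix of $(A - \bar{A})_{I \times J}$, its operator norm is bounded by the operator norm of the ambient submatrix, giving $\|(A - \bar{A})_{J_1 \times J_1}\| \le 200 \, r \sqrt{d}$, which is the claimed estimate with $C = 200$.

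There is no real obstacle here — all the probabilistic work is absorbed into the $\ell_\infty \to \ell_1$ concentration lemma, and all the functional-analytic work is absorbed into Grothendieck's theorem. The only small bookkeeping point is that Theorem~\ref{thm: Grothendieck} as stated produces a rectangular submatrix, so one must be slightly careful to pass to a symmetric submatrix without losing more than the allowed $0.1 n$ vertices; the choice $\delta = 1/20$ is made precisely to leave room for this intersection step.
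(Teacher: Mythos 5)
Your proof is correct and follows exactly the paper's own argument: apply Lemma~\ref{lem: concentration in cut norm}, then Grothendieck's Theorem~\ref{thm: Grothendieck} with $m=k=n$ and $\delta = 1/20$, and intersect the row and column index sets. The arithmetic ($200\,r\sqrt{d}$ and the $0.9n$ lower bound on $|J_1|$) is also right.
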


\begin{remark}[Concentration]
  To better understand Lemma~\ref{lem: concentration in cut norm},
  one can check that $\|\bar{A}\| \ge d$ if $d$ is the average expected degree of the graph
  (that is, $d = \frac{1}{n} \sum_{i,j=1}^n p_{ij}$). Then the conclusion of
  Lemma~\ref{lem: concentration in cut norm}
  can then be stated as
  $$
  \|(A - \bar{A})_{J_1 \times J_1}\| \le \frac{Cr}{\sqrt{d}} \, \|\bar{A}\|.
  $$
\end{remark}

\section{Expansion of the core, and concentration of the adjacency matrix}\label{Sec:Expansion}
Our next goal is to expand the core so it contains all but at most $n/d$ (rather than $0.1n$) vertices.
As we explained in Section~\ref{s: core}, this will be done by repeatedly constructing
core blocks (using Grothendieck's theorems) in the parts of the matrix not yet in the core.
This time we will require a slightly stronger upper bound on the
average degrees than in Assumption~\ref{as: non-symmetric upper}.

\begin{assumption}[Directed graphs, stronger bound on expected density]		\label{as: non-symmetric upper strong}
  $A$ is an $n\times n$ random matrix with independent binary entries,
  and $\E A = (p_{ij})$.
  Let number $d \ge e$ be such that
  $$
  \max_{i,j} n p_{ij} \le d.
  $$
\end{assumption}

\subsection{Concentration in $\ell_\infty \to \ell_1$ norm on blocks}

First, we will need to sharpen  the concentration inequality of Lemma~\ref{lem: concentration in cut norm}
and make it sensitive to the size of the blocks.

\begin{lemma}[Concentration of adjacency matrices in $\ell_\infty \to \ell_1$ norm]  \label{lem: concentration on blocks in cut norm}
  Let $A$ be a random matrix satisfying Assumption~\ref{as: non-symmetric upper strong}.
  Then for any $r\geq 1$ the following holds with probability at least $1-n^{-2r}$.
  Consider a block\footnote{By block we mean a product set $I \times J$
    with arbitrary index subsets $I, J \subseteq [n]$. These subsets are not required to be intervals of successive
    integers.}
  $I \times J$ whose dimensions $m \times k$ satisfy $\min(m,k) \ge n/4d$.
  Then
  $$
  \|(A-\bar{A})_{I \times J}\|_{\infty \to 1}
  \le 30 r \sqrt{mkd}.
  $$
\end{lemma}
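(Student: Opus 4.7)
The plan is to prove the bound by combining Bernstein's inequality with a union bound, generalizing the proof of Lemma~\ref{lem: concentration in cut norm} so that it handles arbitrary blocks rather than just the full matrix. Since
$$\|(A-\bar{A})_{I \times J}\|_{\infty \to 1} = \max_{x \in \{-1,1\}^m,\, y \in \{-1,1\}^k}\, \sum_{i \in I,\, j \in J} (A_{ij}-p_{ij})\, x_i y_j,$$
it suffices to bound the random sum for fixed $I,J,x,y$ and then union bound over the extra choices.

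For fixed $m=|I|$, $k=|J|$ and fixed $I,J,x,y$, the summands $X_{ij}=(A_{ij}-p_{ij})x_iy_j$ are independent, bounded by $1$ in absolute value, with variance summing to at most $mk\cdot d/n$ by Assumption~\ref{as: non-symmetric upper strong}. Bernstein's inequality with $t=30r\sqrt{mkd}$ then bounds the one-sided deviation probability by $\exp\!\bigl(-\tfrac{t^2/2}{mkd/n+t/3}\bigr)$. I would then take a union bound over the block dimensions $(m,k)\in\{\lceil n/(4d)\rceil,\ldots,n\}^2$ (at most $n^2$ pairs), the index sets $I,J$ (at most $\binom{n}{m}\binom{n}{k}$ choices), and the sign vectors (at most $2^{m+k}$ choices). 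The conclusion follows provided the Bernstein exponent is shown to exceed the log of the total enumeration plus $2r\log n$ for every admissible $(m,k)$.

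The main obstacle, and where the constants $30$ and $n/(4d)$ are delicately tuned, is verifying that last inequality uniformly. The lower bound $\min(m,k)\ge n/(4d)$ is used twice. First, together with $\binom{n}{m}\le(en/m)^m$ it caps the enumeration for fixed $(m,k)$ by $(8ed)^{m+k}$, a bound that is sharp when $m,k$ are both close to $n/d$ and that is replaced by the trivial bound $\binom{n}{m}\le 2^n$ whenever one of $m,k$ is close to $n$. Second, it yields the elementary inequality $\sqrt{mkd}\ge (m+k)/4$, so the sub-exponential branch of Bernstein, contributing an exponent of order $r\sqrt{mkd}$, dominates the $(m+k)\log d$ contribution from the enumeration. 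In the complementary sub-Gaussian regime $\sqrt{mkd}\ge 10rn$, Bernstein instead gives an exponent at least $225r^2n$, which is uniformly comfortable. Tracking both regimes carefully — especially the crossover where $\sqrt{mkd}$ and $10rn$ are comparable — confirms that the constant $30$ provides exactly the slack needed for all $r\ge 1$ and $d\ge e$, with no extra logarithmic factor required in the final bound.
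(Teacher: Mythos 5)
Your plan — bound $x^\tran(A-\bar A)_{I\times J}y$ for fixed $I,J,x,y$ by Bernstein, then union-bound over dimensions, index sets, and sign vectors — is exactly the paper's structure, and the overall bound is in fact achievable. But the specific arithmetic you offer to close the union bound contains a genuine error, and the missing point is exactly where the paper's choice of threshold is cleverer than yours.

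The problem is the claim that $\sqrt{mkd}\ge(m+k)/4$ together with the enumeration cap $(8ed)^{m+k}$ makes the sub-exponential Bernstein exponent dominate. From $\sqrt{mkd}\ge(m+k)/4$ you would get a Bernstein exponent of at least $22.5\,r\sqrt{mkd}\ge 5.6\,r\,(m+k)$, which you need to beat $(m+k)\log(8ed)$; this requires $r\gtrsim\log d$ and is simply false for fixed $r\ge 1$ and large $d$. The two bounds you invoke are loose in opposite regimes: $\sqrt{mkd}\ge(m+k)/4$ is tight only when $\max(m,k)\approx n$, but precisely there the cap $(8ed)^{\max(m,k)}$ on the enumeration is enormously loose and must be replaced by $4^n$; conversely when $m,k$ are both near $n/4d$ the enumeration cap is tight, but then $\sqrt{mkd}/(m+k)\approx\sqrt d/2\gg 1/4$, and it is this $\sqrt d$ gain, not the $1/4$, that makes the exponent win. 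Your proposal never articulates the bound that actually closes the argument, which (taking $m\le k$) is $\sqrt{mkd}\ge\sqrt{nk}/2$ combined with $\sqrt{k/n}\log(en/k)\le 2/\sqrt e<2$ uniformly over $k\le n$.

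The paper's route avoids this pitfall by building the logarithm into the threshold: with $m\le k$ they take the per-entry threshold $t=15r\sqrt{d/(mn)}\log(en/k)$, which makes the Bernstein tail exactly $(en/k)^{-8rk}$, matched to $\binom{n}{k}2^k$ by construction, and only then observe that $tmk=15r\sqrt{mkd}\cdot\sqrt{k/n}\log(en/k)\le 30r\sqrt{mkd}$ because $\sqrt{k/n}\log(en/k)\le 2$. Since the paper's threshold is no larger than yours, your probability bound is at least as strong, so your conclusion is correct — but the justification you gave for it does not establish it. You should replace the appeal to $\sqrt{mkd}\ge(m+k)/4$ versus $(m+k)\log(8ed)$ by the sharper comparison above (or adopt the paper's $k$-dependent threshold), and also note that each per-$(m,k)$ failure probability must be pushed below $n^{-2r-2}$, not $n^{-2r}$, to absorb the sum over the $\le n^2$ choices of $(m,k)$.
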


\begin{proof}
The proof is similar to that of Lemma~\ref{lem: concentration in cut norm}, except we
take a further union bound over the blocks $I \times J$ in the end.
Let us fix $I$ and $J$. Without loss of generality, we may assume that $m \le k$.
By definition,
\begin{equation}       \label{eq: cut norm as max on blocks}
\|(A-\bar{A})_{I\times J}\|_{\infty \to 1}
= \max_{x\in\{-1,1\}^m, \, y\in\{-1,1\}^k} \sum_{i\in I, \, j\in J}(A_{ij}-\bar{A}_{ij}) x_i y_j.
\end{equation}
For fixed pair $x,y$, we use Bernstein's inequality like in Lemma~\ref{lem: concentration in cut norm}.
Denoting $X_{ij}=(A_{i,j}-\bar{A}_{i,j})x_iy_j$, we obtain
\begin{equation}         \label{eq: sum Xij bounded on blocks}
\Pr{ \sum_{i\in I, \, j\in J}X_{i,j} > tmk } \leq \exp\left(-\frac{m k t^2/2}{d/n+t/3}\right).
\end{equation}

Deviating at this point from the proof of Lemma~\ref{lem: concentration in cut norm},
we would like this probability to be bounded by $(en/k)^{-8rk}$ in order to make room for the
later union bound over $I, J$. One can easily check that this happens if we choose
$t = 15 r \sqrt{d/mn} \log(en/k)$; this is the place where we use the assumption $m \ge n/4d$.
Thus, taking a union bound over $2^{m} \cdot 2^{k}$ choices of pairs $x,y$ and using
\eqref{eq: cut norm as max on blocks} and \eqref{eq: sum Xij bounded on blocks},
we obtain that
\begin{equation}         \label{eq: concentration on blocks better}
\|(A-\bar{A})_{I\times J}\|_{\infty \to 1} \le tkm
  = 15r\sqrt{mkd} \cdot \sqrt{\frac{k}{n}}\log\left(\frac{en}{k}\right)
\end{equation}
with probability at least $1 - 2^{m+k} \cdot (en/k)^{-8rk}$.
We continue by taking a union bound over all choices of $I$ and $J$.
Recalling our assumption that $m \le k$, we obtain that \eqref{eq: concentration on blocks better}
holds uniformly for all $I,J$, as in the statement of the lemma, with probability at least
$$
1 - \sum_{m = n/d}^n \sum_{k=m}^n\binom{n}{m}\binom{n}{k}2^{m+k}
  \left(\frac{en}{k}\right)^{-8rk}\geq 1-n^{-2r}.
$$
Thus we proved a slightly stronger version of the lemma, since
the extra term $\sqrt{\frac{k}{n}}\log\left(\frac{en}{k}\right)$ in \eqref{eq: concentration on blocks better}
is always bounded by $2$.
\end{proof}

As in Section~\ref{s: first core block}, we can combine Lemma~\ref{lem: concentration on blocks in cut norm}
with Grothendieck's Theorem~\ref{thm: Grothendieck}. We conclude
the following expansion result.

\begin{lemma}[Weak expansion of core into a block]			\label{lem: weak expansion into a block}
  Let $A$ be a matrix satisfying the conclusion of
  Concentration Lemma~\ref{lem: concentration on blocks in cut norm}.
  Consider a block $I \times J$ whose dimensions $m \times k$ satisfy
  $\min(m,k) \ge n/4d$.
  Then for every $\d \in (0,1)$ there exists a sub-block $I' \times J'$ of dimensions
  at least $(1-\d)m \times (1-\d)k$
  and such that
  $$
  \|(A - \bar{A})_{I' \times J'}\| \le C r \d^{-1} \sqrt{d}.
  $$
\end{lemma}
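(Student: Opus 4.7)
The plan is to obtain the lemma as a direct combination of the two tools that have just been assembled: the block cut-norm concentration inequality (Lemma on concentration on blocks in cut norm), which is assumed to hold for $A$, and Grothendieck's Theorem (Theorem~\ref{thm: Grothendieck}), which converts a cut-norm bound into an operator-norm bound on an almost-full sub-block.

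First, I apply the cut-norm bound to the block $I\times J$ itself. The hypothesis $\min(m,k)\ge n/(4d)$ is precisely the one required by the conclusion we are assuming, so
$$
\|(A-\bar{A})_{I\times J}\|_{\infty\to 1} \;\le\; 30\,r\sqrt{mkd}.
$$
Set $B := (A-\bar{A})_{I\times J}$; this is an $m\times k$ real matrix whose $\ell_\infty\to\ell_1$ norm is controlled above.

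Second, I apply Grothendieck's Theorem~\ref{thm: Grothendieck} to $B$ with the given parameter $\delta$. This yields subsets $I''\subseteq\{1,\dots,m\}$ and $J''\subseteq\{1,\dots,k\}$ with $|I''|\ge (1-\delta)m$ and $|J''|\ge (1-\delta)k$ satisfying
$$
\|B_{I''\times J''}\| \;\le\; \frac{2\,\|B\|_{\infty\to 1}}{\delta\sqrt{mk}}
\;\le\; \frac{2\cdot 30\,r\sqrt{mkd}}{\delta\sqrt{mk}} \;=\; \frac{60\,r\sqrt{d}}{\delta}.
$$
Pulling $I''$ and $J''$ back through the indexing bijections with $I$ and $J$ produces $I'\subseteq I$ and $J'\subseteq J$ of the required sizes, and the displayed bound is exactly the claim with $C=60$.

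There is no genuine obstacle here: the lemma is designed so that the two previous statements click together. The only things to be careful about are (i) matching the size hypothesis $\min(m,k)\ge n/(4d)$ to the hypothesis of the cut-norm concentration lemma, and (ii) the bookkeeping of absolute constants, where the factor $30$ from cut-norm concentration and the factor $2$ (times Grothendieck's constant, absorbed into the $2$) from Grothendieck's Theorem combine into the single constant $C$. No probabilistic argument is needed at this stage because we have reduced to matrices $A$ for which the cut-norm conclusion is already assumed to hold.
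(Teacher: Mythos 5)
Your proposal is correct and follows exactly the same route the paper takes: apply the block cut-norm concentration bound to $B=(A-\bar A)_{I\times J}$ and then invoke Grothendieck's Theorem~\ref{thm: Grothendieck} to pass to the operator norm on a $(1-\delta)$-fraction sub-block, combining the constants $30$ and $2$ into $C$. The bookkeeping and the matching of the size hypothesis are handled as the paper intends.
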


\subsection{Strong expansion of the core into a block}

The core sub-block $I' \times J'$ constructed in Lemma~\ref{lem: weak expansion into a block} is still too
small for our purposes. For $m \le k$, we would like $J'$ to miss the number of
columns that is a small fraction in $m$ (the smaller dimension!) rather than $k$.
To achieve this, we can apply Lemma~\ref{lem: weak expansion into a block}
repeatedly for the parts of the block not yet in the core, until
we gain the required number of columns. Let us formally state and prove this result.

\begin{proposition}[Strong expansion into a block]			\label{prop: expansion into a block}
  Let $A$ be a matrix satisfying the conclusion of
  Concentration Lemma~\ref{lem: concentration on blocks in cut norm}.
  Then any block $I \times [n]$ with $|I| =: m \ge n/4d$ rows
  contains a sub-block $I' \times J'$ of dimensions
  at least $(m-m/8) \times (n-m/8)$ and such that
  $$
  \|(A - \bar{A})_{I' \times J'}\| \le C r \sqrt{d} \log^2 d.
  $$
\end{proposition}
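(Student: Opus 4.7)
The plan is to iterate the weak-expansion Lemma~\ref{lem: weak expansion into a block} on the ``still uncovered'' columns and glue the resulting sub-blocks by the triangle inequality. Initialize $C_0 = [n]$. At step $k$, apply the weak-expansion lemma to the block $I \times C_k$ with parameter $\d_k$ to extract a sub-block $I_k \times J_k$ with $|I_k| \ge (1-\d_k) m$, $|J_k| \ge (1-\d_k)|C_k|$, and $\|(A-\bar{A})_{I_k \times J_k}\| \le C r \sqrt{d}/\d_k$. Set $C_{k+1} = C_k \setminus J_k$, so $|C_{k+1}| \le \d_k |C_k|$, and continue until $|C_K| \le m/8$. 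Finally set $I' = \bigcap_{k<K} I_k$ and $J' = [n] \setminus C_K = \bigsqcup_{k<K} J_k$; the $J_k$'s are disjoint because $J_k \subseteq C_k$ and $J_j$ has been removed from $C_{j+1},\dots,C_k$. Since $I' \subseteq I_k$ for every $k$, the triangle inequality applied to the block column decomposition yields
\[
\|(A - \bar A)_{I' \times J'}\| \;\le\; \sum_{k<K} \|(A-\bar A)_{I' \times J_k}\| \;\le\; \sum_{k<K} \|(A-\bar A)_{I_k \times J_k}\| \;\le\; \sum_{k<K} \frac{C r \sqrt{d}}{\d_k}.
\]

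The parameters must be balanced so that (i) the cumulative row loss $\sum_k \d_k m$ stays below $m/8$, (ii) the iteration shrinks $|C_k|$ all the way down to $m/8$ (which in the worst case $m = n/(4d)$ is only $n/(32d)$), and (iii) the total norm is polylogarithmic in $d$. The balanced choice is $\d_k = 1/(16 K)$ with $K = \lceil c \log d\rceil$ for a suitable absolute constant $c$: the total row loss is $K \cdot m/(16 K) = m/16 < m/8$; the shrinkage $|C_K| \le n (16K)^{-K}$ falls below $m/8 \ge n/(32 d)$ once $K \log(16K) \gtrsim \log(32 d)$, which holds for $K = \Theta(\log d)$; and the summed norm telescopes to $K \cdot 16 K \cdot C r \sqrt d = O(r \sqrt d \log^2 d)$, matching the stated bound.

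The main obstacle is that Lemma~\ref{lem: concentration on blocks in cut norm}, and hence the weak-expansion lemma, requires both dimensions of the queried block to be at least $n/(4d)$, whereas the iteration above inevitably drives $|C_k|$ below $n/(4d)$ before it reaches the target $m/8$. I would resolve this by a padding step: as soon as $|C_{K^*}| < n/(4d)$, replace $C_{K^*}$ by a superset $\widetilde C_{K^*} \supseteq C_{K^*}$ of size exactly $\lceil n/(4d)\rceil$, obtained by adding arbitrary indices from $[n]\setminus C_{K^*}$; apply the weak-expansion lemma to the admissible block $I \times \widetilde C_{K^*}$ with the same $\d_{K^*}$; and retain $J_{K^*} \cap C_{K^*}$ as the set of newly covered bad columns. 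The remaining bad columns then satisfy $|C_{K^*+1}| \le |\widetilde C_{K^*} \setminus J_{K^*}| \le \d_{K^*}\lceil n/(4d)\rceil \le n/(32d) \le m/8$, so a single padded step completes the iteration. Since restriction does not increase operator norm, this extra step contributes only one more summand of the same order $Cr\sqrt d/\d_{K^*}$ to the bound, so the final estimate stays at $O(r\sqrt d \log^2 d)$.
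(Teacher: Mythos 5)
Your argument follows the paper's proof of Proposition~\ref{prop: expansion into a block} essentially step for step: apply the weak-expansion Lemma~\ref{lem: weak expansion into a block} iteratively to the still-uncovered columns with a constant per-step loss $\d \asymp 1/\log d$, intersect the row sets and take the disjoint union of the column sets, and sum the $O(\log d)$ block norms by the triangle inequality to produce the $\log^2 d$ factor. The one genuine addition is your padding step: you are right that when $m < 2n/d$ the iteration can shrink the number of remaining columns below the threshold $n/(4d)$ required by Lemma~\ref{lem: concentration on blocks in cut norm} before it reaches the target $m/8$ -- a point the paper's proof passes over -- and padding the column set up to size $\lceil n/(4d)\rceil$, applying the lemma there, and then intersecting back with the true residual is a correct repair that contributes only one extra summand of the same order, so the $O(r\sqrt{d}\log^2 d)$ bound is preserved.
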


\begin{proof}
Let $\d \in (0,1)$ be a small parameter whose value we will chose later.
The first application of Lemma~\ref{lem: weak expansion into a block}
gives us a sub-block $I_1 \times J_1$ which misses at most $\d m$ rows
and $\d n$ columns of $I \times [n]$, and on which $A$ concentrates nicely:
$$
\|(A-\bar{A})_{I_1 \times J_1}\| \le C r \d^{-1} \sqrt{d}.
$$
If the number of missing columns is to big, i.e. $\d n > m/8$, we apply
Lemma~\ref{lem: weak expansion into a block} again for the block
consisting of the missing columns, that is for $I \times J_1^c$. It has dimensions
at least $m \times \d n$. We obtain a sub-block $I_2 \times J_2$ which misses at
most $\d m$ rows and $\d^2 n$ columns, and on which $A$ nicely concentrates:
$$
\|(A-\bar{A})_{I_2 \times J_2}\| \le C r \d^{-1} \sqrt{d}.
$$
If the number of missing columns is still too big, i.e. $\d^2 n > m/8$, we
continue this process for $I \times (J_1 \cup J_2)^c$, otherwise we stop.
Figure~\ref{fig: core-block-expansion} illustrates this process.
\begin{figure}[htp]			
  \centering \includegraphics[width=0.6\textwidth]{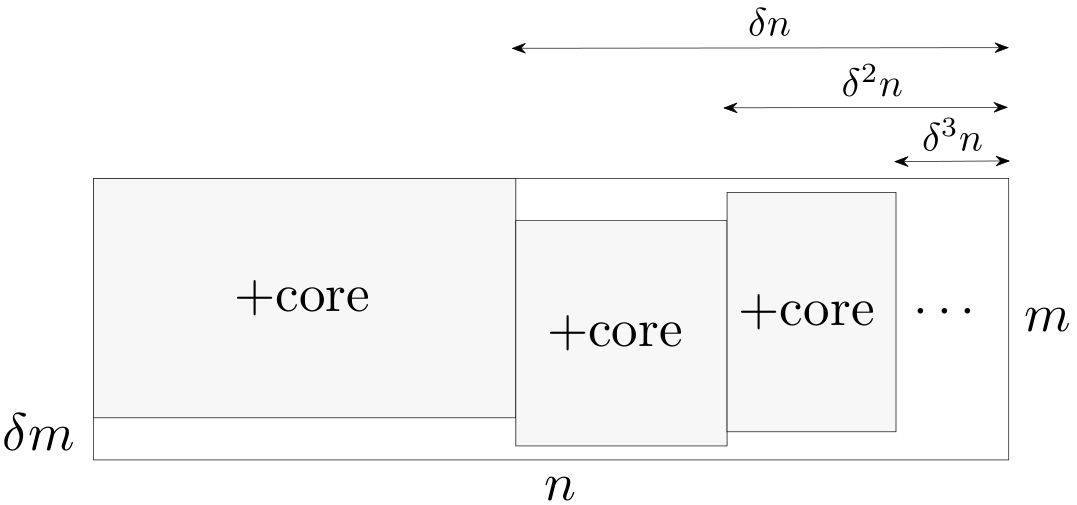}
  \caption{Core expansion into a block. First we construct the leftmost core block $I_1 \times J_1$,
    then the next core block to the right $I_2 \times J_2$, etc. The number of remaining columns
    reduces exponentially.}
   \label{fig: core-block-expansion}	
\end{figure}

The process we just described terminates after a finite number of
applications of Lemma~\ref{lem: weak expansion into a block}, which we denote by $T$.
The termination criterion yields that
\begin{equation}         \label{eq: T}
T \le \frac{\log(8n/m)}{\log(1/\d)} \le \frac{\log (8d)}{\log(1/\d)}.
\end{equation}
(The second inequality follows from the assumption that $m\ge n/d$.)
As an outcome of this process, we obtain disjoint blocks $I_t \times J_t \subseteq I \times [n]$
which satisfy
\begin{equation}         \label{eq: It Jt}
|I \setminus I_t| \le \d m \quad \text{and } \quad
|J \setminus (J_1 \cup \cdots \cup J_T)| \le m/8
\end{equation}
for all $t$.
The matrix $A$ concentrates nicely on each of these blocks:
$$
\|(A-\bar{A})_{I_t \times J_t}\| \le C r \d^{-1} \sqrt{d}.
$$

We are ready to choose the index sets $I'$ and $J'$ that would satisfy the required conclusion.
We include in $I'$ all rows of $I$ except those left out at each of the block extractions,
and we include in $J'$ all columns of each block. Formally, we define
\begin{equation}         \label{eq: I' J'}
I' := I_1 \cap \cdots \cap I_T \quad \text{and} \quad J' := J_1 \cup \cdots \cup J_T.
\end{equation}
By \eqref{eq: It Jt}, these subsets are adequately large, namely
\begin{equation}         \label{eq: I' J' sizes}
|I \setminus I'| \le T \d m \quad \text{and} \quad |J \setminus J'| \le m/8.
\end{equation}
To check that $A$ concentrates on $I' \times J'$,
we can decompose this block into (parts of) the sub-blocks we extracted before,
and use the bounds on their norms. Indeed, using \eqref{eq: I' J'} we obtain
\begin{align}
\|(A-\bar{A})_{I' \times J'}\|
&\le \sum_{t=1}^T \|(A-\bar{A})_{I' \times J_t}\|
\le \sum_{t=1}^T \|(A-\bar{A})_{I_t \times J_t}\| \nonumber\\
&\le C T r \d^{-1} \sqrt{d}.     \label{eq: conc on I' J'}
\end{align}
It remains to choose the value of $\d$. We let $\d = c/ \log(8d)$
where $c>0$ is an absolute constant. Choosing $c$ small enough
to ensure that we have $T \d \le 1/8$ according to \eqref{eq: T}. This
implies that, due to \eqref{eq: I' J' sizes}, the size of the block $I' \times J'$
we constructed is indeed at least $(m-m/8) \times (n-m/8)$ as we claimed.
Finally, using our choice of $\delta$ and the bound \eqref{eq: T} on $T$
we conclude from \eqref{eq: conc on I' J'} that
$$
\|(A-\bar{A})_{I' \times J'}\| \le C r \sqrt{d} \frac{\log^2(8d)}{c\log\left[c^{-1}\log(8d)\right]}.
$$
This is slightly better than we claimed.
\end{proof}

\subsection{Concentration of the adjacency matrix on the core: final result}

Recall that our goal is to improve upon Proposition~\ref{prop: first core block}
by expanding the core set $J_1$ until it contains all but $n/d$ vertices.
With the expansion tool given by Proposition~\ref{prop: expansion into a block},
we are one step away from this goal.
We are going to show that if the core is not yet as large
as we want, we can still expand it a bit more.

\begin{lemma}[Expansion of the core that is not too large] 			\label{lem: expansion if not too large}
  Let $A$ be a matrix satisfying the conclusion of
  Concentration Lemma~\ref{lem: concentration on blocks in cut norm}.
  Consider a subset $J$ of $[n]$ which contains all but $m \ge n/4d$ indices.
  Then there exists a subset $J'$ of $[n]$ which contains
  all but at most $m/2$ indices, and such that
  \begin{equation}         \label{eq: expansion if not too large}
  \|(A - \bar{A})_{J' \times J'}\| \le \|(A - \bar{A})_{J \times J}\| + C r \sqrt{d} \log^2 d.
  \end{equation}
\end{lemma}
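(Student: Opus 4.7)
My plan is as follows. Write $B := A - \bar A$ and $I := [n]\setminus J$, so $|I| = m \ge n/(4d)$. To produce the larger core $J'$ I would extract from $I$ a set $S$ of at least $m/2$ "good" bad indices that are simultaneously governed by a row-direction expansion and a column-direction expansion of Proposition~\ref{prop: expansion into a block}, and then control $\|B_{J'\times J'}\|$ with $J' := J\cup S$ by a coarse $2\times 2$ block decomposition along the partition $J' = J\sqcup S$.

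Concretely, I would apply Proposition~\ref{prop: expansion into a block} to $B$ on the block $I\times[n]$ to obtain $\alpha_1\subseteq I$ and $\beta_1\subseteq[n]$ with $|I\setminus\alpha_1|\le m/8$, $|[n]\setminus\beta_1|\le m/8$, and $\|B_{\alpha_1\times\beta_1}\|\le T := Cr\sqrt{d}\log^2 d$. I would then apply the same Proposition to the transpose $B^\tran$ on $I\times[n]$, obtaining $\alpha_2\subseteq I$ and $\beta_2\subseteq[n]$ with analogous cardinality bounds and $\|B_{\beta_2\times\alpha_2}\|\le T$. Setting $S := \alpha_1\cap\alpha_2\cap\beta_1\cap\beta_2$, the four intersections cost at most $m/8$ indices each, so $|S|\ge m - 4(m/8) = m/2$; then $J' := J\cup S$ satisfies $|[n]\setminus J'|\le m/2$, as required.

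By the triangle inequality applied to the block partition $J'=J\sqcup S$,
\begin{equation*}
\|B_{J'\times J'}\| \le \|B_{J\times J}\| + \|B_{J\times S}\| + \|B_{S\times J}\| + \|B_{S\times S}\|.
\end{equation*}
The diagonal term is immediate: since $S\subseteq\alpha_1\cap\beta_1$, one has $S\times S\subseteq\alpha_1\times\beta_1$, so $\|B_{S\times S}\|\le T$. For $\|B_{J\times S}\|$, split the rows as $J = (J\cap\beta_2)\sqcup(J\setminus\beta_2)$; the main part $(J\cap\beta_2)\times S$ is contained in $\beta_2\times\alpha_2$ (using $S\subseteq\alpha_2$) and therefore has norm at most $T$. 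The symmetric column split $J = (J\cap\beta_1)\sqcup(J\setminus\beta_1)$ handles $\|B_{S\times J}\|$ via $\alpha_1\times\beta_1$.

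The principal technical obstacle, which I expect to be the heart of the proof, is controlling the two thin residual sub-blocks $(J\setminus\beta_2)\times S$ and $S\times(J\setminus\beta_1)$: each has one dimension bounded by $m/8$ but the other possibly as large as $m/2$, so an entry-wise or Frobenius bound is far too weak. The natural remedy is one further application of Proposition~\ref{prop: expansion into a block} (or directly of the iterative Weak Expansion Lemma~\ref{lem: weak expansion into a block} with a logarithmic number of sweeps) to these residual sub-blocks, contributing an additional $O(T)$ to the norm; this is legitimate as long as the short dimension is at least $n/(4d)$, i.e.\ throughout $m\ge 2n/d$, and in the small boundary regime $m\in[n/(4d),2n/d]$ one instead bounds the residuals directly via Lemma~\ref{lem: concentration on blocks in cut norm} followed by a Grothendieck extraction with $\delta$ of order $1/\log d$. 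Summing the four contributions then gives $\|B_{J'\times J'}\|\le\|B_{J\times J}\| + C'r\sqrt{d}\log^2 d$, as claimed.
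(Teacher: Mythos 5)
Your reduction to two applications of Proposition~\ref{prop: expansion into a block} and your isolation of the residual blocks are correct, but the gap you flag in the last paragraph is genuine and your proposed remedies do not close it. Fixing $J' := J \cup S$ forces you to bound $(J\setminus\beta_2)\times S$ and $S\times(J\setminus\beta_1)$, whose short side has length up to $m/8$. Two problems then arise. First, whenever $m < 2n/d$ that short side can lie below $n/(4d)$, so neither Lemma~\ref{lem: concentration on blocks in cut norm} nor Proposition~\ref{prop: expansion into a block} applies (padding does not help: Lemma~\ref{lem: concentration on blocks in cut norm} needs \emph{both} dimensions at least $n/(4d)$, and the halving iteration of Theorem~\ref{thm: adjacency on core} really does pass through the regime $m\in[n/(4d),2n/d)$). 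Second, even when the short side is large enough, Proposition~\ref{prop: expansion into a block} only extracts a \emph{sub}-block of the residual, leaving a still smaller residual of aspect at most $m/64$ by the long side; iterating, the recursion stalls once the short side drops below $n/(4d)$, and at that point no estimate in the paper controls the operator norm of the leftover (an entrywise or $\ell_2$ bound gives something of order $\sqrt{n/d}\,$ polylog, far larger than $\sqrt{d}\log^2 d$).

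The paper's proof avoids the issue at its source by not requiring $J' \supseteq J$. After the same two applications of Proposition~\ref{prop: expansion into a block} (to $J^c\times[n]$ and to the transposed $J\times J^c$, yielding core blocks $I_1\times J_1$ and $I_2\times J_2$), it deletes from $[n]$ every row missed by either application, forming $I_0$ with $|[n]\setminus I_0| \le m/8+m/8 = m/4$, and every missed column, forming $J_0$ with $|[n]\setminus J_0|\le m/4$, then sets $J' := I_0\cap J_0$, which misses at most $m/2$ indices. By construction, the partition of $J'\times J'$ induced by $[n]\times[n]=(J\times J)\sqcup(J^c\times[n])\sqcup(J\times J^c)$ consists of sub-blocks of $J\times J$, of $I_1\times J_1$, and of $I_2\times J_2$, so the triangle inequality gives \eqref{eq: expansion if not too large} with no residual term at all. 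Your argument would become essentially the paper's if, instead of keeping $J$ intact, you also pruned from $J$ the few (at most $m/4$ in each direction) indices that fall outside the two extracted blocks.
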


\begin{proof}
We can decompose the entire $[n] \times [n]$ into three disjoint blocks --
the core block $J \times J$ and the two blocks $J^c \times [n]$ and $J \times J^c$
in which we would like to expand the core; see Figure \ref{fig: core-expansion}
for illustration.
\begin{figure}[htp]			
  \centering \includegraphics[width=0.3\textwidth]{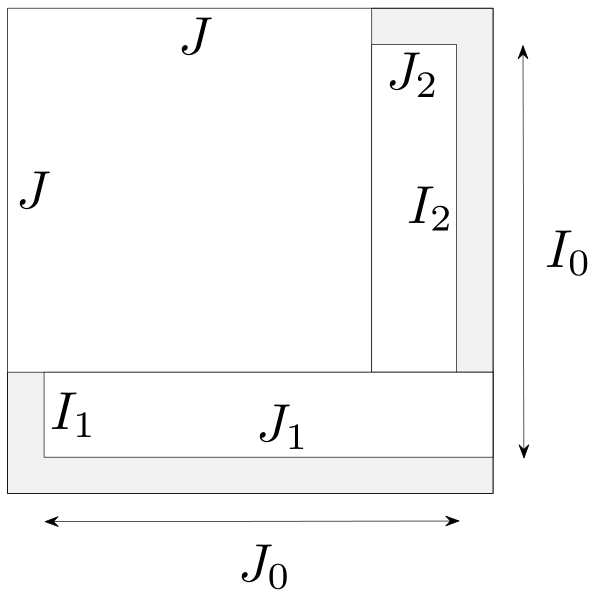}
  \caption{Expansion of the core.}
  \label{fig: core-expansion}	
\end{figure}

Applying Proposition~\ref{prop: expansion into a block} to the $m \times n$ block $J^c \times [n]$,
we obtain a sub-block $I_1 \times J_1$ which contains all but at most $m/8$ of its rows and columns,
and on which $A$ nicely concentrates:
\begin{equation}         \label{eq: conc on I1 J1}
\|(A - \bar{A})_{I_1 \times J_1}\| \le C r \sqrt{d} \log^2 d.
\end{equation}
Doing the same for the $(n-m) \times m$ block $J \times J^c$ (after transposing
and extending to an $m \times n$ block), we obtain a sub-block $I_2 \times J_2$
which again contains all but at most $m/8$ of its rows and columns,
and on which $A$ nicely concentrates:
\begin{equation}         \label{eq: conc on I2 J2}
\|(A - \bar{A})_{I_2 \times J_2}\| \le C r \sqrt{d} \log^2 d.
\end{equation}

Let $I_0$ denote the set of all rows in $[n]$ except those $m/8+m/8$ rows missed in the construction
of either of the two sub-blocks $I_1 \times J_1$ or $I_2 \times J_2$. Similarly, we let $J_0$ be the
set of the columns. The decomposition of $[n] \times [n]$ considered in the beginning of the proof
induces a decomposition of $I_0 \times J_0$ into three blocks, which are sub-blocks of
$J \times J$, $I_1 \times J_1$ and $I_2 \times J_2$.
(This follows since we remove all missing rows and columns.)
Therefore, by triangle inequality we have
$$
\|(A - \bar{A})_{I_0 \times J_0}\|
\le \|(A - \bar{A})_{J \times J}\| + \|(A - \bar{A})_{I_1 \times J_1}\| + \|(A - \bar{A})_{I_2 \times J_2}\|.
$$
Substituting \eqref{eq: conc on I1 J1} and \eqref{eq: conc on I2 J2}, we conclude
that $A$ nicely concentrates on the block $I_0 \times J_0$ -- just as we desired
in \eqref{eq: expansion if not too large}.
Since $I_0$ and $J_0$ may be different sets, we finalize the argument by choosing
$J' = I_0 \cap J_0$. Then $J' \times J'$ is a sub-block of $I_0 \times J_0$, so the
concentration inequality \eqref{eq: expansion if not too large} now holds as promised. Moreover,
since each of the sets $I_0$ and $J_0$ misses at most $m/4$ indices,
$J'$ misses at most $m/2$ indices as claimed.
\end{proof}

Lemma~\ref{lem: expansion if not too large} allows us to keep expanding the core
until it misses all but $(4d)^{-1}n$ vertices.

\begin{theorem}[Concentration of adjacency matrix on core]			\label{thm: adjacency on core}
  Let $A$ be a random matrix satisfying Assumption~\ref{as: non-symmetric upper strong}.
  Then for any $r\geq 1$ the following holds with probability at least $1-2n^{-2r}$.
  There exists a subset $J$ of $[n]$ which contains
  all but at most $n/4d$ indices, and such that
  $$
  \|(A - \bar{A})_{J \times J}\| \le C r \sqrt{d} \log^3 d.
  $$
\end{theorem}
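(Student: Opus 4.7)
The plan is to iteratively expand a core set, starting from Proposition~\ref{prop: first core block} and repeatedly applying Lemma~\ref{lem: expansion if not too large}, until the number of missing vertices drops from $0.1n$ down to $n/(4d)$. Each expansion step roughly halves the missing set while adding $Cr\sqrt{d}\log^2 d$ to the norm bound; since we need $O(\log d)$ halvings to go from $n/10$ down to $n/(4d)$, the total norm bound is $Cr\sqrt{d}\log^3 d$, giving exactly the claimed exponent on $\log d$.

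First I would fix the probabilistic setup. Condition on the intersection of the events in Lemma~\ref{lem: concentration in cut norm} (which holds with probability at least $1-e^{-2rn}$) and Lemma~\ref{lem: concentration on blocks in cut norm} (which holds with probability at least $1-n^{-2r}$). Under Assumption~\ref{as: non-symmetric upper strong}, the average degree bound needed for Lemma~\ref{lem: concentration in cut norm} follows from $\max_{i,j} n p_{ij} \le d$. The union bound gives failure probability at most $e^{-2rn}+n^{-2r}\le 2n^{-2r}$, as required. All subsequent arguments are then deterministic consequences of these two concentration events.

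Next I would initialize the core using Proposition~\ref{prop: first core block}: obtain $J^{(0)}\subseteq[n]$ with $|(J^{(0)})^c|\le 0.1 n=:m_0$ and
\[
\|(A-\bar A)_{J^{(0)}\times J^{(0)}}\|\le Cr\sqrt{d}.
\]
Then I would iterate as follows. Given $J^{(t)}$ with $m_t:=|(J^{(t)})^c|$, if $m_t\ge n/(4d)$ apply Lemma~\ref{lem: expansion if not too large} to produce $J^{(t+1)}$ with $m_{t+1}\le m_t/2$ and
\[
\|(A-\bar A)_{J^{(t+1)}\times J^{(t+1)}}\|\le \|(A-\bar A)_{J^{(t)}\times J^{(t)}}\|+Cr\sqrt{d}\log^2 d.
\]
Stop as soon as $m_T<n/(4d)$. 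Since $m_t\le 2^{-t}m_0=2^{-t}(n/10)$, the stopping time satisfies $T\le \log_2(4d/10)+O(1)\le C\log d$. Unrolling the telescoping bound gives
\[
\|(A-\bar A)_{J^{(T)}\times J^{(T)}}\|\le Cr\sqrt{d}+T\cdot Cr\sqrt{d}\log^2 d\le Cr\sqrt{d}\log^3 d,
\]
and $|(J^{(T)})^c|\le n/(4d)$, so setting $J:=J^{(T)}$ finishes the proof.

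The only subtle point is making sure the hypothesis $m_t\ge n/(4d)$ of Lemma~\ref{lem: expansion if not too large} is exactly the stopping criterion, so the iteration is well-defined at every step where we apply it; this is built into the loop condition. A secondary point to double-check is that Assumption~\ref{as: non-symmetric upper strong} implies the average-degree hypothesis of Lemma~\ref{lem: concentration in cut norm} (it does, trivially), so both conditioning events are genuinely available. There is no serious obstacle beyond bookkeeping: all the heavy lifting has already been done in Lemmas~\ref{lem: concentration on blocks in cut norm} and~\ref{lem: expansion if not too large}, and this theorem is really just the geometric-series packaging of those results.
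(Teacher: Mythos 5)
Your proof is correct and follows essentially the same approach as the paper: initialize with Proposition~\ref{prop: first core block}, iterate Lemma~\ref{lem: expansion if not too large} until the missing set drops below $n/(4d)$, and unroll the telescoping bound with at most $O(\log d)$ expansion steps. The probability accounting ($e^{-2rn} + n^{-2r} \le 2n^{-2r}$) and the observation that $\max_{i,j} np_{ij} \le d$ implies the average-degree hypothesis of Lemma~\ref{lem: concentration in cut norm} are both handled correctly.
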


\begin{proof}
Fix a realization of the random matrix $A$ which satisfies the conclusions of
Proposition~\ref{prop: first core block} and Concentration Lemma~\ref{lem: concentration on blocks in cut norm}.
Then Proposition~\ref{prop: first core block} gives us the first subset $J_1$ that misses at
most $0.1n$ indices, and such that
$$
\|(A - \bar{A})_{J_1 \times J_1}\| \le C r \sqrt{d}.
$$
If the number of missing indices is smaller than $n/4d$, we stop.
Otherwise we apply the Expansion Lemma~\ref{lem: expansion if not too large}.
We obtain a subset $J_2$ which misses twice fewer indices than $J_1$, and for which
$$
\|(A - \bar{A})_{J_2 \times J_2}\| \le C r \sqrt{d} + C r \sqrt{d} \log^2 d.
$$
If the new number of missing indices is smaller than $n/4d$, we stop. Otherwise
we keep applying the Expansion Lemma~\ref{lem: expansion if not too large}.

Each application of this lemma results in an additive term $C r \sqrt{d} \log^2 d$,
and it also halves the number of missing indices.
By the stopping criterion, the total number of applications is at most $\log d$.
Thus, after the process stops, the final set $J$ satisfies
$$
\|(A - \bar{A})_{J \times J}\| \le C r \sqrt{d} + C r \sqrt{d} \log^2 d \cdot \log d.
$$
This completes the proof.
\end{proof}

\subsection{Extending the result for undirected graphs}

Theorem~\ref{thm: adjacency on core} can be readily extended for undirected graphs,
where the adjacency matrix $A$ is symmetric, with only entries on and above the
diagonal that are independent.
We claimed such result in the adjacency part of Theorem~\ref{thm: core intro}; let us
restate and prove it.

\begin{theorem}[Concentration of adjacency matrix on core: undirected graphs]		\label{thm: adjacency on core undirected}
  Let $A$ be a random matrix satisfying the same requirements as in
  Assumption~\ref{as: non-symmetric upper strong}, except $A$ is symmetric.
  Then for any $r\geq 1$ the following holds with probability at least $1-2n^{-2r}$.
  There exists a subset $J$ of $[n]$ which contains
  all but at most $n/2d$ indices, and such that
  $$
  \|(A - \bar{A})_{J \times J}\| \le C r \sqrt{d} \log^3 d.
  $$
\end{theorem}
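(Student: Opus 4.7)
The plan is to reduce the undirected case directly to Theorem~\ref{thm: adjacency on core} by splitting the symmetric matrix into pieces whose entries are all jointly independent. Concretely, I write $A = B + B^\tran - D$, where $B$ is the upper-triangular matrix (including the diagonal) with $B_{ij} = A_{ij}$ for $i \le j$ and $B_{ij} = 0$ for $i > j$, and $D = \diag(A_{11}, \ldots, A_{nn})$. Because the entries of $A$ on and above the diagonal are jointly independent by assumption, the entries of $B$ are jointly independent (its strictly-sub-diagonal entries being deterministic zeros). Moreover $\E B$ still satisfies Assumption~\ref{as: non-symmetric upper strong} with the same parameter $d$, since $n(\E B)_{ij} \le n p_{ij} \le d$ for $i \le j$ and $(\E B)_{ij} = 0$ otherwise.

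The main step is to apply Theorem~\ref{thm: adjacency on core} to $B$. This produces, with probability at least $1 - 2n^{-2r}$, a subset $J \subseteq [n]$ missing at most $n/(4d) \le n/(2d)$ indices and satisfying
\begin{equation*}
\|(B - \bar{B})_{J \times J}\| \le C r \sqrt{d} \log^3 d.
\end{equation*}
Restricting the decomposition $A - \bar{A} = (B - \bar{B}) + (B - \bar{B})^\tran - (D - \bar{D})$ to the symmetric block $J \times J$ and applying the triangle inequality gives
\begin{equation*}
\|(A - \bar{A})_{J \times J}\|
\le \|(B - \bar{B})_{J \times J}\| + \|((B - \bar{B})_{J \times J})^\tran\| + \|(D - \bar{D})_{J \times J}\|.
\end{equation*}
The first two summands are equal, since transposition preserves the operator norm and $J \times J$ is a square block; the third is a diagonal matrix of entries in $[-1,1]$, hence bounded by $1$ and absorbed into the constant. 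Combining these yields the claimed concentration bound on $J \times J$.

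Because the argument is a clean reduction to the directed case, no substantial obstacle arises. The only point requiring a moment of verification is that $B$ genuinely meets the hypotheses of Theorem~\ref{thm: adjacency on core}: the only source of randomness in the upper triangle of $A$ carries over unchanged to $B$, while the zeroed sub-diagonal entries are trivially independent of everything else. The mild weakening of the density threshold from $n/(4d)$ to $n/(2d)$ in the statement gives slack that is convenient (and would, for instance, also accommodate an alternative argument based on intersecting the cores obtained from $B$ and $B^\tran$ separately), but is not strictly needed here.
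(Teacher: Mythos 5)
Your reduction is correct, and it is a genuine (if small) improvement over the paper's own argument. The paper decomposes $A=A^{+}+A^{-}$ into the parts above and below the diagonal, applies Theorem~\ref{thm: adjacency on core} \emph{separately} to $A^{+}$ and $A^{-}$, intersects the two resulting core sets, and sums the two bounds by the triangle inequality; this is why the paper's statement allows up to $n/2d$ missing indices (two cores each missing up to $n/4d$), and strictly speaking costs a second factor of the failure probability. You instead write $A = B + B^{\tran} - D$ with $B$ upper-triangular, apply Theorem~\ref{thm: adjacency on core} \emph{once} to $B$, and then exploit that $(B^{\tran})_{J\times J}=\bigl(B_{J\times J}\bigr)^{\tran}$ has the same operator norm on the symmetric block $J\times J$, with the diagonal correction $D-\bar D$ bounded trivially by $1$. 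The one point worth noting explicitly is that this identity between the restricted transposes relies on $J\times J$ being a square block indexed by the same set on both sides, which is indeed the case. Your route uses one application of the directed theorem instead of two, hence gives a core missing only $n/4d$ indices at failure probability $2n^{-2r}$ with no union bound, which is actually slightly better than what the paper states and proves.
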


\begin{proof}
We decompose the matrix $A = A^+ + A^-$ so that each of $A^+$ and $A^-$ has all independent
entries. (Consider the parts of $A$ above and below the diagonal.)
It remains to apply Theorem~\ref{thm: adjacency on core} for $A^+$ and $A^-$ and
intersect the two subsets we obtain. The conclusion follows by triangle inequality.
\end{proof}

\section{Decomposition of the residual}\label{Sec:DecompResidual}
In this section we show how to decompose the residual (in fact, any small matrix)
into two parts, one with sparse rows and the other with sparse columns.
This will lead to Theorem~\ref{thm: sparse decomposition intro},
which we will obtain in a slightly more informative form as
Theorem~\ref{thm: decomposition of residual undirected} below.

Again, we will work with directed graphs for most of the time, and in the end
discuss undirected graphs.

\subsection{Square sub-matrices: selecting a sparse row}

First we show how to select just one sparse row from square sub-matrices.
Then we extend this to rectangular matrices, and finally we iterate the process
to construct the required decomposition.

\begin{lemma}[Selecting a sparse row]				\label{lem: sparse row}
  Let $A$ be a random matrix satisfying Assumption~\ref{as: non-symmetric upper strong}.
  Then for any $r \ge 1$ the following holds with probability at least $1-n^{-2r}$.
  Every square sub-matrix of $A$ with at most $n/d$ rows
  has a row with at most $10 r \log d$ entries that equal $1$.
\end{lemma}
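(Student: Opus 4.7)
The plan is a direct union bound combined with a row-wise Chernoff estimate. Let $k := 10 r \log d$. Observe first that the conclusion is vacuous whenever the sub-matrix has $m \le k$ rows, since a row of an $m \times m$ matrix has only $m$ entries in $\{0,1\}$, hence at most $m \le k$ of them equal $1$. So I may restrict to $k < m \le n/d$.

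Now fix such an $m$ and an arbitrary sub-matrix $A_{I\times J}$ with $|I| = |J| = m$. Because the entries of $A$ are independent, so are the rows of $A_{I\times J}$, and each row-sum $X_i^J := \sum_{j \in J} A_{ij}$ is a sum of independent Bernoullis with $\E X_i^J \le md/n \le 1$. The standard binomial/Chernoff tail gives
\begin{equation*}
  \Pr{X_i^J > k} \;\le\; \binom{m}{k+1}\Big(\frac{d}{n}\Big)^{k+1} \;\le\; \Big(\frac{e m d}{n k}\Big)^{k},
\end{equation*}
and hence the probability that every row of $A_{I\times J}$ has more than $k$ ones is at most $(emd/(nk))^{mk}$. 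A union bound over the $\binom{n}{m}^2 \le (en/m)^{2m}$ choices of $(I,J)$ and then over $m$ bounds the total failure probability by $\sum_m \exp(-m\Phi(m))$, where a direct algebraic manipulation gives
\begin{equation*}
  \Phi(m) \;=\; (k-2)\log(n/m) \;-\; 2 \;-\; k \log(ed/k).
\end{equation*}

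It then remains to show $\sum_m e^{-m\Phi(m)} \le n^{-2r}$. The key observation is that the assumption $m \le n/d$ forces $\log(n/m) \ge \log d$, which combined with $k \ge 10$ gives the uniform lower bound $\Phi(m) \ge k(\log k - 1) - 2(1 + \log d) \ge 9$ for any $r \ge 1$ and $d \ge e$. Moreover, although $\Phi(m)$ itself is decreasing in $m$, one can check that $m\Phi(m)$ is monotone \emph{increasing} on $(k, n/d]$ (its derivative is $\Phi(m) - (k-2)$, which stays positive throughout the range), so the sum is geometric and dominated by its smallest term $m = k+1$. For large $n$ the factor $\log(n/(k+1))$ is close to $\log n$ and so this smallest term is of order $n^{-\Omega(k^2)}$, while for moderate $n$ the uniform bound $\Phi(m)\ge 9$ already yields $e^{-9(k+1)} \le n^{-2r}$. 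Combining the two regimes produces the desired bound $n^{-2r}$.

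The only real obstacle is careful bookkeeping of the constants, in order to confirm that the specific coefficient $10$ in the definition $k = 10 r \log d$ suffices uniformly over all relevant $m$, $n$, $d$, and $r$. The crucial identity throughout is the assumption $m \le n/d$, which simultaneously keeps the row-wise mean $\mu \le 1$ (so the sharp Chernoff form $(e\mu/k)^k$ applies) and furnishes the offset $\log(n/m) \ge \log d$ needed to dominate the combinatorial factor $(en/m)^{2m}$ in the union bound.
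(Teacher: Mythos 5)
Your overall strategy mirrors the paper's: a row-wise Chernoff/binomial tail estimate followed by a union bound over supports $I \times J$ and over $m$. The observation that the statement is vacuous for $m\le k$ is a nice simplification, and your formula for $\Phi(m)$ and the monotonicity of $m\Phi(m)$ both check out. The gap is in the final step where you argue $\sum_m e^{-m\Phi(m)} \le n^{-2r}$ by splitting into ``moderate $n$'' and ``large $n$'' regimes. Your ``moderate $n$'' claim $e^{-9(k+1)} \le n^{-2r}$ amounts to $9(10r\log d+1) \ge 2r\log n$, i.e.\ roughly $\log n \le 45\log d$. Your ``large $n$'' claim that $\log(n/(k+1))$ is comparable to $\log n$ requires $n \gtrsim (k+1)^2 \sim r^2\log^2 d$. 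These two regimes do not exhaust the range of admissible $n$: for small $d$ and large $r$ (e.g.\ $d = e$, $r = 10^{10}$, $n = 10^{21}$, so $k < n/d$ and the sum is nonempty), one has $\log n > 45\log d$ yet $n < (k+1)^2$, and indeed $e^{-9(k+1)} > n^{-2r}$ there. The inequality $(k+1)\Phi(k+1) \ge 2r\log n$ does hold in this gap, but verifying it requires noticing that the $-k\log(ed/k)$ term in $\Phi$ contributes a $+k\log k$ that nearly cancels the $-(k-2)\log(k+1)$ coming from $\log(n/(k+1)) = \log n - \log(k+1)$; your proof does not perform this bookkeeping.

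The paper sidesteps the case analysis entirely with one clean trick. After the Chernoff bound it writes the exponent as a product $\log(1/p) \ge 10r\log d \cdot \log\bigl(3n/(dm)\bigr)$, notes that both factors are $\ge 1$ because $m\le n/d$ and $d\ge e$, and applies $2ab \ge a+b$ to get $\log(1/p) \ge 5r\bigl[\log d + \log(3n/(dm))\bigr] = 5r\log(3n/m)$. Hence $p^m \le (3n/m)^{-5mr}$, which dominates $\binom{n}{m}^2 \le (en/m)^{2m}$ term by term (since $5r-2\ge 3$), and the union-bound sum collapses to $\le n^{-2r}$ with no regimes to consider. If you want to keep your route through $\Phi(m)$, the fix is to bound $(k+1)\Phi(k+1)$ directly by expanding $\log(n/(k+1))$ and $\log(ed/k)$ and exhibiting the cancellation; alternatively adopt the paper's $2ab\ge a+b$ step, which is the cleaner way to turn the product of two logarithms in the Chernoff exponent into a sum that matches the combinatorial factor.
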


\begin{proof}
The argument consists of a standard application of Chernoff's inequality
and a union bound over the square sub-matrices $A_{I \times J}$.

Let us fix the dimensions $m \times m$ and the support $I \times J$ of
a sub-matrix $A_{I \times J}$ for a moment, and consider one of its rows.
The number of entries that equal $1$ in $i$-th row $S_i = \sum_{j \in I} A_{ij}$
is a sum of $m$ independent Bernoulli random variables $A_{ij}$.
Each $A_{ij}$ has expectation at most $d/n$ by the assumptions on $A$.
Thus the expected number of ones in $i$-th row is at most $1$, since
$$
\E S_i \le \frac{dm}{n} =: \mu
$$
which is bounded by $1$ by assumption on $m$.

To upgrade this to a high-probability statement, we can use Chernoff's inequality.
It implies that the probability that $i$-th row is too dense (denser than
we are seeking in the lemma) is
\begin{equation}         \label{eq: p defined}
\Pr{ S_i > 10 r \log d } \le \Big( \frac{10 r \log d}{e \mu} \Big)^{-10 r \log d} =: p.
\end{equation}
By independence, the probability that all $m$ rows of $A_{I \times J}$
are too dense is at most $p^m$.
%
%
%
%
Before we take the union bound over $I \times J$, let us simplify the
the probability $p$ in \eqref{eq: p defined}. Since $r \ge 1$, we have
$10 r \log d / (e \mu) \ge 3/\mu = 3n/(dm)$.
Therefore
\begin{equation}         \label{eq: two logs}
\log(1/p) \ge 10 r \log(d) \log \Big( \frac{3n}{dm} \Big).
\end{equation}
By assumption $m \le n/d$ on the number of rows,
both logarithms in the right hand side of \eqref{eq: two logs} are bounded below by $1$.
Then, using the elementary inequality $2ab \ge a+b$ that is valid
for all $a,b \ge 1$, we obtain
$$
\log(1/p) \ge 5 r \Big[ \log d + \log \frac{3n}{dm} \Big]
= 5 r \log \Big( \frac{3n}{m} \Big).
$$
Summarizing, we have shown that for a fixed support $I \times J$,
the probability that all $m$ rows of the sub-matrix $A_{I \times J}$
are too dense is bounded by
$$
p^m \le \Big( \frac{3n}{m} \Big)^{-5mr}.
$$

It remains to take a union bound over all supports $I \times J$.
This bounds the failure probability of the conclusion of lemma by
$$
\sum_{m=1}^{n/d} \binom{n}{m}^2 p^m
\le \sum_{m=1}^{n/d} \Big( \frac{en}{m} \Big)^{2m} \Big( \frac{3n}{m} \Big)^{-5mr}
\le n^{-2r}.
$$
This completes the proof.
\end{proof}

\subsection{Rectangular sub-matrices, and iteration}

Although we stated Lemma~\ref{lem: sparse row} for square matrices, it can be easily
adapted for rectangular matrices as well. Indeed, consider a $m \times k$ sub-matrix of $A$.
If the matrix is tall, that is $m \ge k$, then we can extend it to a square $m \times m$
sub-matrix by adding arbitrary columns from $A$. Applying Lemma~\ref{lem: sparse row},
we obtain a sparse row of the bigger matrix -- one with at most $10 r \log d$ ones in it.
Then trivially the same row of the original $m \times k$ sub-matrix will be sparse as well.

The same reasoning can be repeated for fat sub-matrices, that is for $m \le k$,
this time by applying Lemma~\ref{lem: sparse row} to the transpose of $A$.
This way we obtain a sparse column of a fat sub-matrix. Combining the two cases, we
conclude the following result that is valid for all small sub-matrices.

\begin{lemma}[Selecting a sparse row or column]				\label{lem: sparse row or column}
  Let $A$ be a random matrix satisfying Assumption~\ref{as: non-symmetric upper strong}.
  Then for any $r \ge 1$ the following holds with probability at least $1-2n^{-2r}$.
  Every sub-matrix of $A$ whose dimensions $m \times k$ satisfy $\min(m,k) \le n/d$
  has a row (if $m \ge k$) or a column (if $m \le k$) with at most $10 r \log d$ entries
  that equal $1$.
\end{lemma}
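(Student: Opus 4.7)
The plan is a direct reduction to the square case in Lemma~\ref{lem: sparse row}, applied both to $A$ and its transpose, combined by a union bound. Since the bound $\max_{i,j}np_{ij}\le d$ is symmetric in the indices and the entries of $A$ are jointly independent, $A^\tran$ also satisfies Assumption~\ref{as: non-symmetric upper strong} with the same parameter $d$. Hence Lemma~\ref{lem: sparse row} holds simultaneously for $A$ and for $A^\tran$ on an event $\EE$ of probability at least $1-2n^{-2r}$: every square sub-matrix of either $A$ or $A^\tran$ of size at most $n/d$ contains a row with at most $10r\log d$ ones.

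On $\EE$, I would split into the tall and fat cases. In the tall case $m\ge k$, the hypothesis $\min(m,k)\le n/d$ reduces to $k\le n/d$. Since $m\ge k$, any $k$-subset $I_0\subseteq I$ yields a square $k\times k$ sub-matrix $A_{I_0\times J}$ with at most $n/d$ rows, and Lemma~\ref{lem: sparse row} supplies a row $i\in I_0$ of $A_{I_0\times J}$ with at most $10r\log d$ ones; this same row, read inside $A_{I\times J}$, has the same entries (the column index set $J$ is unchanged) and is therefore the desired sparse row of the original sub-matrix. The fat case $m\le k$ is handled symmetrically by transposition: $A^\tran_{J\times I}$ is a $k\times m$ sub-matrix of $A^\tran$ in tall configuration with $m\le n/d$, so the previous argument applied to $A^\tran$ yields a sparse row of $A^\tran_{J\times I}$, which is precisely a sparse column of $A_{I\times J}$.

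There is essentially no obstacle here: once one observes that transposing preserves Assumption~\ref{as: non-symmetric upper strong}, the uniformity already built into Lemma~\ref{lem: sparse row} does all of the work. The authors' preferred variant replaces the restriction step by an extension --- they extend the tall $m\times k$ sub-matrix to a square $m\times m$ sub-matrix of $A$ by adjoining arbitrary extra columns --- and then apply Lemma~\ref{lem: sparse row} to the enlarged square; the sparse row of the extension remains sparse upon restriction to the original $k$ columns. Both approaches deliver the same conclusion, the restriction being slightly more flexible since it places no separate upper bound on $m$.
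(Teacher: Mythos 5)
Your proposal is correct and follows essentially the paper's route: reduce to the square case of Lemma~\ref{lem: sparse row}, using the observation that $A^\tran$ also satisfies Assumption~\ref{as: non-symmetric upper strong}, and take a union bound to get $1-2n^{-2r}$. The one place where you diverge is in how you produce a square sub-matrix, and it is worth being explicit that your variant is the one that actually works. In the tall case $m \ge k$, the paper \emph{extends} the $m\times k$ sub-matrix to an $m\times m$ square by adjoining columns, and then invokes Lemma~\ref{lem: sparse row}; but that lemma is stated (and its union bound in \eqref{eq: two logs} is run) only over square sub-matrices with at most $n/d$ rows, and the hypothesis $\min(m,k)\le n/d$ gives $k\le n/d$ with no control on $m=\max(m,k)$. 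So the enlarged $m\times m$ square may fall outside the scope of Lemma~\ref{lem: sparse row}, and the extension step as written does not quite close. Your \emph{restriction} to a $k\times k$ sub-matrix (choosing any $k$ of the $m$ rows) keeps the side length equal to $\min(m,k)\le n/d$, so Lemma~\ref{lem: sparse row} applies directly, and the sparse row found in the restriction is a row of the original tall block with the same $k$ entries. The symmetric treatment of the fat case via $A^\tran$ is also right, and you correctly note that transposition (not merely restriction) is genuinely needed there, since a sparse row of a restricted $m\times m$ block of $A$ would not yield the sparse \emph{column} the statement demands. In short: same strategy as the paper, but your restriction step repairs a small but real slip in the published argument, and you have accurately identified why it matters.
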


Iterating this result -- selecting rows and columns one by one -- we are going to
obtain a desired decomposition of the residual.
Here we adopt the following convention. Given a subset $\RR$ of $[n] \times [n]$,
we denote by $A_\RR$ the $n \times n$ matrix\footnote{This does not exactly agree with
  our usage of $A_{I \times J}$ which denotes an $|I| \times |J|$ matrix, but this slight
  disagreement will not cause confusion.} that has the same entries as $A$ on $\CC$
and zero outside $\RR$.

\begin{theorem}[Decomposition of the residual]					\label{thm: decomposition of residual}
  Let $A$ be a random matrix satisfying Assumption~\ref{as: non-symmetric upper strong}.
  Then for any $r \ge 1$ the following holds with probability $1-2n^{-2r}$.
  Every index subset $I \times J$ of $[n] \times [n]$ whose dimensions $m \times k$ satisfy
  $\min(m,k) \le n/d$
  can be decomposed into two disjoint subsets $\RR$ and $\CC$
  with the following properties:
    \begin{enumerate}[(i)]
      \item each row of $\RR$ and each column of $\CC$
        have at most $\min(m,k)$ entries;\footnote{Formally, for $\RR$ this means that
        $|\{j:\; (i,j) \in \RR\}| \le \min(m,k)$
        for each $i \in [n]$, and similarly for $\CC$.}
      \item each row of the matrix $A_\RR$ and each column of the matrix $A_\CC$
        have at most $10 r \log d$ entries that equal $1$.
    \end{enumerate}
\end{theorem}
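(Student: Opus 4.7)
The plan is to construct $\RR$ and $\CC$ greedily by repeated application of Lemma~\ref{lem: sparse row or column} (Selecting a sparse row or column). Start with the ``active'' sub-matrix $A_{I \times J}$ of dimensions $m \times k$. At each step, let $I' \times J'$ be the currently active index set with dimensions $m' \times k'$. If $m' \ge k'$, the active sub-matrix is tall, so invoke the lemma to extract a row with at most $10 r \log d$ ones; move the entire row $\{i\} \times J'$ into $\RR$ and delete $i$ from $I'$. If $m' \le k'$, extract a sparse column and move $I' \times \{j\}$ into $\CC$, deleting $j$ from $J'$. Iterate until the active set becomes empty, which happens after at most $m+k$ steps. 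By construction $\RR \sqcup \CC = I \times J$.

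For the probability bound, condition once on the single high-probability event provided by Lemma~\ref{lem: sparse row or column}, which has probability at least $1 - 2 n^{-2r}$ and asserts the existence of a sparse row or column \emph{for every} sub-matrix with $\min(m', k') \le n/d$. Since we start from a set with $\min(m,k) \le n/d$ and only remove rows and columns, the invariant $\min(m', k') \le n/d$ is preserved throughout, so the lemma is applicable at every step without any additional randomness.

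Property (ii) of the theorem is then immediate: each extracted row (respectively column) has at most $10 r \log d$ ones, and these rows are exactly the nonzero rows of $A_\RR$ (respectively nonzero columns of $A_\CC$). Property (i) requires a short bookkeeping argument. When we extract a row into $\RR$ we are in the tall case $m' \ge k'$, and the row has at most $k'$ active positions; since rows and columns are only removed, $k' \le k$ and $k' \le m' \le m$, hence $k' \le \min(m, k)$. The symmetric argument bounds the support of each column moved to $\CC$ by $m' \le \min(m, k)$ when we are in the fat case.

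The main (and rather mild) subtlety is the bookkeeping for property (i): one must note that the ``at most $\min(m,k)$'' bound is governed by the smaller of the \emph{currently active} dimensions at the moment of extraction, and combine the comparison $k' \le m'$ (or $m' \le k'$) with the monotonicity $m' \le m$, $k' \le k$ to convert this into a bound in terms of the original dimensions. No further probabilistic input beyond Lemma~\ref{lem: sparse row or column} is needed, and the probability estimate $1 - 2n^{-2r}$ stated in the theorem is exactly the one inherited from that lemma.
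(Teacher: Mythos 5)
Your proof is correct and follows essentially the same route as the paper: iterate Lemma~\ref{lem: sparse row or column} to greedily peel off sparse rows from tall sub-matrices and sparse columns from fat ones, condition once on the lemma's uniform high-probability event (which is preserved since both active dimensions only shrink), and bound each extracted row/column's support by the smaller current dimension. The paper's proof of part~(i) is stated slightly more tersely (fixing $m \le k$ WLOG and observing the aspect ratio reverses), but your bookkeeping via $k' \le m' \le m$ and $k' \le k$ is the same argument made explicit.
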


\begin{proof}
Let us fix a realization of $A$ for which the conclusion of Lemma~\ref{lem: sparse row or column}
holds. Suppose we would like to decompose an $m \times k$ sub-matrix $A_{I \times J}$.
According to Lemma~\ref{lem: sparse row or column}, it has a sparse row or column.
Remove this row or column, and apply Lemma~\ref{lem: sparse row or column}
for the remaining sub-matrix. We obtain a sparse row or column of the smaller matrix.
Remove it as well, and apply Lemma~\ref{lem: sparse row or column} for the remaining sub-matrix.
Continue this process until we removed everything from $A_{I \times J}$.
Then define $\RR$ to be the union of all rows we removed throughout this process,
and $\CC$ the union of the removed columns. By construction, $\RR$ and $\CC$ satisfy
part (ii) of the conclusion.

Part (i) follows by analyzing the construction of $\RR$ and $\CC$.
Without loss of generality, let $m \le k$.
The construction starts by removing columns
of $A_{I \times J}$ (which obviously have $m$ entries as required) until the aspect ratio
reverses, i.e. there remain fewer columns than $m$. After that point, both dimensions
of the remaining sub-matrix are again bounded by $m$, so part (i) follows.
\end{proof}

\subsection{Extending the result for undirected graphs}

Theorem~\ref{thm: decomposition of residual} can be readily extended for undirected graphs.
We stated such result as Theorem~\ref{thm: residual intro}; let us restate it in
a somewhat more informative form.

\begin{theorem}[Decomposition of the residual, undirected graphs]					\label{thm: decomposition of residual undirected}
  Let $A$ be a random matrix satisfying the same requirements as in
  Assumption~\ref{as: non-symmetric upper strong}, except $A$ is symmetric.
  Then for any $r \ge 1$ the following holds with probability $1-2n^{-2r}$.
  Every index subset $I \times J$ of $[n] \times [n]$ whose dimensions $m \times k$ satisfy
  $\min(m,k) \le n/d$
  can be decomposed into two disjoint subsets $\RR$ and $\CC$
  with the following properties:
    \begin{enumerate}[(i)]
      \item each row of $\RR$ and each column of $\CC$
        have at most $2\min(m,k)$ entries;
      \item each row of the matrix $A_\RR$ and each column of the matrix $A_\CC$
        have at most $20 r \log d$ entries that equal $1$.
    \end{enumerate}
\end{theorem}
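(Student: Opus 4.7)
The plan is to reduce the undirected (symmetric) statement to two applications of the directed version, Theorem~\ref{thm: decomposition of residual}, by splitting $A$ along the diagonal exactly as in the proof of Theorem~\ref{thm: adjacency on core undirected}. Write $A = A^+ + A^-$, where $A^+$ is the upper-triangular part of $A$ (including the diagonal) and $A^-$ is the strictly lower-triangular part. Since the independent entries of $A$ lie on and above the diagonal, each of the matrices $A^+$ and $A^-$ has jointly independent binary entries (each entry is either zero or equal to one of the independent entries of $A$), and each satisfies Assumption~\ref{as: non-symmetric upper strong} with the same $d$. Applying Theorem~\ref{thm: decomposition of residual} separately to $A^+$ and to $A^-$, and taking a union bound, with probability at least $1-2n^{-2r}$ (after mildly tightening constants) the following holds simultaneously: every index subset $I\times J$ with $\min(m,k)\le n/d$ admits disjoint decompositions $I\times J = \RR^+\sqcup \CC^+ = \RR^-\sqcup \CC^-$ satisfying the row/column entry bound $\min(m,k)$ and the $10 r\log d$ bound on the number of ones per row of $A^+_{\RR^+}$, $A^-_{\RR^-}$ and per column of $A^+_{\CC^+}$, $A^-_{\CC^-}$.

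The next step is to combine these two decompositions into a single decomposition of $I\times J$. The natural candidate (and the one I would choose) uses the diagonal to switch between the two: set
\begin{equation*}
  \RR := (\RR^+\cap \mathrm{UT}) \cup (\RR^-\cap \mathrm{LT}), \qquad
  \CC := (I\times J)\setminus \RR = (\CC^+\cap \mathrm{UT}) \cup (\CC^-\cap \mathrm{LT}),
\end{equation*}
where $\mathrm{UT}$ and $\mathrm{LT}$ denote the (closed) upper and (strict) lower triangular index sets. Thus $\RR$ and $\CC$ are disjoint with union $I\times J$, and since the supports of $A^+$ and $A^-$ are disjoint and lie in $\mathrm{UT}$ and $\mathrm{LT}$ respectively, the ones of $A_\RR$ on row $i$ split cleanly as ones of $A^+_{\RR^+\cap\mathrm{UT}}$ (a subset of the ones in row $i$ of $A^+_{\RR^+}$) plus ones of $A^-_{\RR^-\cap\mathrm{LT}}$ (a subset of the ones in row $i$ of $A^-_{\RR^-}$). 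The point of restricting to $\mathrm{UT}$ in the $\RR^+$ piece and to $\mathrm{LT}$ in the $\RR^-$ piece is exactly to avoid double-counting ones that lie in the ``wrong'' triangle, which would otherwise spoil the sparsity bound.

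With this setup the verification of properties (i) and (ii) is routine. For (i), each row of $\RR$ has at most $|\{j:(i,j)\in \RR^+\}| + |\{j:(i,j)\in \RR^-\}| \le 2\min(m,k)$ entries, and symmetrically for columns of $\CC$. For (ii), row $i$ of $A_\RR$ carries at most $10 r\log d$ ones from $A^+_{\RR^+}$ plus at most $10 r\log d$ ones from $A^-_{\RR^-}$, totaling $20 r\log d$; the column bound for $A_\CC$ follows analogously. Since there is no analytical step beyond the two directed decompositions and a counting argument, there is no real obstacle. The only mild subtlety is bookkeeping: one must choose the upper/lower split so that the support of $A^+_{\RR^+}$ in the ``wrong'' triangle does not contribute ghost entries, which is precisely what the intersection with $\mathrm{UT}$ and $\mathrm{LT}$ accomplishes.
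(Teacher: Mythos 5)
Your proof is correct and takes the same route as the paper: split $A = A^+ + A^-$ into upper and lower triangular parts, each with all independent entries, and apply the directed Theorem~\ref{thm: decomposition of residual} to each. The paper's proof is very terse here (``choose $\RR$ to be the union of the disjoint sets $\RR^+$ and $\RR^-$ \dots; the conclusion follows trivially''), and the one genuinely nontrivial point --- which you identify and handle explicitly --- is that the naive union $\RR^+ \cup \RR^-$ does \emph{not} give the claimed bound on ones per row of $A_\RR$, because an index $(i,j)$ can sit in $\RR^-\setminus\RR^+$ yet lie in the upper triangle, where the one it carries comes from $A^+$ and is therefore uncontrolled by the $A^+$-decomposition. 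Your fix of intersecting $\RR^+$ with the upper triangle and $\RR^-$ with the strictly lower triangle (and likewise for $\CC$) is exactly what is needed: since $A=A^+$ on $\mathrm{UT}$ and $A=A^-$ on $\mathrm{LT}$, the ones of $A_\RR$ in row $i$ are a subset of the ones of $A^+_{\RR^+}$ in row $i$ together with the ones of $A^-_{\RR^-}$ in row $i$, giving the $20r\log d$ bound, and similarly for columns of $\CC$; the counting for property (i) then yields $2\min(m,k)$ as claimed. This is arguably a small gap in the paper's ``trivially,'' and your write-up closes it cleanly.
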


\begin{proof}
We decompose the matrix $A = A^+ + A^-$ so that each of $A^+$ and $A^-$ has all independent
entries. (Consider the parts of $A$ above and below the diagonal.)
It remains to apply Theorem~\ref{thm: adjacency on core} for $A^+$ and $A^-$ and
choose $\RR$ to be the union of the disjoint sets $\RR^+$ and $\RR^-$ we obtain this way;
similarly for $\CC$. The conclusion follows trivially.
\end{proof}

\section{Concentration of the Laplacian on the core}\label{Sec:LaplacianCore}
In this section we translate the concentration result on the core,
Theorems~\ref{thm: adjacency on core}, from adjacency matrices to Laplacian matrices.
This will lead to the second part of Theorem~\ref{thm: core intro}.

From now on, we will focus on undirected graphs, where $A$ is a symmetric matrix.
Throughout this section, it will be more convenient to work with the alternative Laplacian
defined in \eqref{eq: averaging operator} as
$$
L(A) = I - \LL(A) = D^{-1/2} A D^{-1/2}.
$$
Clearly, the concentration results are the same for both definitions of Laplacian, since
$L(A) - L(\bar{A}) = \LL(A) - \LL(\bar{A})$ (and similarly for $A_\tau$).

\subsection{Concentration of degrees}

We will easily deduce concentration of $L(A)$ on the core from concentration
of adjacency matrix $A$ (which we already proved in Theorems~\ref{thm: adjacency on core})
and the degree matrix $D = \diag(d_j)$.
The following lemma establishes concentration of $D$ on the core.

\begin{lemma}[Concentration of degrees on core]					\label{lem: degrees on core}
  Let $A$ be a random matrix satisfying the same requirements as
  in Assumption~\ref{as: non-symmetric upper strong}, except $A$ is symmetric.
  Then for any $r \ge 1$, the following holds with probability at least $1-n^{-2r}$.
  There exists a subset $J$ of $[n]$ which contains
  all but at most $n/2d$ indices, and such that the degrees $d_j = \sum_{i=1}^n A_{ij}$
  satisfy
  $$
  |d_j - \E d_j| \le 30 r \sqrt{d \log d} \quad \text{for all } j \in J.
  $$
\end{lemma}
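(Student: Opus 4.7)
The plan is to take $J$ to be the set of vertices where the degree is well-concentrated and to bound the complement directly. Set $t := 30r\sqrt{d \log d}$ and $J := \{j : |d_j - \E d_j| \le t\}$; the deviation bound is then automatic, and only $|J^c| \le n/(2d)$ remains. For each individual $j$, the column entries $\{A_{ij}\}_{i=1}^n$ are jointly independent Bernoullis with total variance at most $\E d_j \le d$, so a direct application of Bernstein's inequality gives
\[
\Pr{|d_j - \E d_j| > t} \le 2\, d^{-C_0 r^2}
\]
for a large absolute constant $C_0$, after splitting the Bernstein denominator in the two regimes $t \le d$ and $t > d$.

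The difficulty is that in the undirected setting the events $b_j := \{|d_j - \E d_j| > t\}$ are not independent, since the shared entry $A_{ij} = A_{ji}$ enters both $d_i$ and $d_j$; thus the naive union bound gives $2n\, d^{-C_0 r^2}$, which does not reach $n^{-2r}$ when $d$ is much smaller than a polynomial in $n$. To get around this, I would decouple by writing $d_j = A_{jj} + S_j^- + S_j^+$ with $S_j^- := \sum_{i<j} A_{ij}$ and $S_j^+ := \sum_{i>j} A_{ij}$. Each above-diagonal edge variable $A_{\{i,j\}}$ with $i<j$ appears in $S_j^-$ and in $S_i^+$ but in no other $S_k^\pm$, so each of the two families $\{S_j^-\}_{j=1}^n$ and $\{S_j^+\}_{j=1}^n$ is jointly independent. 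The triangle inequality then gives, for $t$ exceeding a small constant (guaranteed by $d \ge e$, $r \ge 1$),
\[
b_j \;\subseteq\; \bigl\{|S_j^- - \E S_j^-| > t/3\bigr\} \cup \bigl\{|S_j^+ - \E S_j^+| > t/3\bigr\},
\]
so $|J^c| \le N^- + N^+$ with $N^\pm := \bigl|\{j : |S_j^\pm - \E S_j^\pm| > t/3\}\bigr|$.

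It therefore suffices to show $N^\pm \le n/(4d)$ with probability $\ge 1 - n^{-2r}/2$. A second application of Bernstein to each $S_j^\pm$ (again a sum of independent Bernoullis with variance $\le d$) yields $\Pr{|S_j^\pm - \E S_j^\pm| > t/3} \le p := 2\, d^{-C_1 r^2}$ for a large constant $C_1$. Crucially, by the independence of the family $\{S_j^\pm\}_j$, the random variable $N^\pm$ is itself a sum of $n$ independent Bernoullis of parameter $\le p$, so a one-line binomial-tail estimate gives
\[
\Pr{N^\pm \ge n/(4d)} \;\le\; \binom{n}{n/(4d)}\, p^{n/(4d)} \;\le\; (4edp)^{n/(4d)}.
\]

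The main technical point is verifying that this tail is uniformly below $n^{-2r}/2$ across the permitted range of $d$. This splits cleanly into two regimes that meet: when $d$ is at least a small polynomial in $n$, already $p \le n^{-3r}$ and a plain union bound would have closed the argument without decoupling; when $d$ is modest, the exponent $n/(4d)$ is large and $(4edp)^{n/(4d)}$ decays faster than $n^{-2r}$ provided $C_1$ was chosen sufficiently large (a constant on the order of $500$ handles both regimes simultaneously). Combining the failure events for $N^-$ and $N^+$ via a union bound then yields $|J^c| \le n/(2d)$ with probability at least $1 - n^{-2r}$, completing the proof.
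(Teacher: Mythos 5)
Your proof is correct and follows essentially the same route as the paper's: split $d_j$ into its upper-triangular and lower-triangular parts so that each family of partial degree sums is jointly independent across $j$, apply Bernstein's inequality to each partial sum, observe that the number of outlying indices is then a sum of independent Bernoullis, and close with a Chernoff/binomial tail bound. The paper folds the diagonal term into one of the two sums rather than isolating it, but this is cosmetic. One small slip in your write-up: the Bernstein exponent is only $\Theta(r^2 \log d)$ in the regime where the variance $d$ dominates the threshold $t$; when $t$ dominates (which happens for small $d$) the exponent degrades to $\Theta(r\sqrt{d\log d}) = \Omega(r\log d)$, so the claimed $d^{-C_1 r^2}$ should be $d^{-C_1 r}$ uniformly — but this weaker bound is exactly what the paper uses ($d^{-13r}$) and it is still ample for the Chernoff step, so the proof goes through unchanged.
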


\begin{proof}
Let us fix $j \in [n]$ for a moment. We decompose $A$ into an upper triangular and a lower
triangular matrix, each of which has independent entries. This induces the decomposition of
the degrees
$$d_j = \sum_{i=1}^n A_{ij} = \sum_{i=j}^n A_{ij} + \sum_{i=1}^{j-1} A_{ij} =:  d_j^- + d_j^+.$$
By triangle inequality, it is enough to show that $d_j^-$ and $d_j^+$ concentrate near
their own expected values. Without loss of generality, let us do this for $d_j^+$.

By construction, $d_j^+$ is a sum of $n$ independent Bernoulli random variables
(including $n-j+1$ zeros)
whose variances are all bounded by $d/n$ by assumption on $A$.
Thus Bernstein's inequality (see Theorem~2.10 in \cite{Boucheron&Lugosi&Massart2013}) yields
$$
\Pr{ |d_j^+ - \E d_j^+| > nt } \le \exp \Big( - \frac{n t^2/2}{d/n+t/3} \Big), \quad t > 0.
$$
Choosing $t = 15 (r/n) \sqrt{d \log d}$ and simplifying the probability bound, we obtain
$$
\Pr{ |d_j^+ - \E d_j^+| > 15 r \sqrt{d \log d} } \le d^{-13r}.
$$

We choose $J^+$ to consist of the indices for which $|d_j^+ - \E d_j^+| \le 15 r \sqrt{d \log d}$.
To control the size of the complement $(J^+)^c$, we may view it as a sum of $n$ independent
Bernoulli random variables, each with expectation at most $d^{-13r}$.
Thus $\E|(J^+)^c| \le n d^{-13r}$, and Chernoff's inequality implies that
$$
\Pr{ |(J^+)^c| > n/4d } \le \Big( \frac{n/4d}{e \cdot n d^{-13r}} \Big)^{-n/4d}.
$$
Simplifying, we see that this probability is bounded by $2n^{-3r}$.

Repeating the argument for $d_j^-$, we obtain a similar set $J^-$.
Choosing $J$ to be the intersection of $J^+$ and $J^-$ and combining the two
concentration bounds by triangle inequality, we complete the proof.
\end{proof}

\subsection{Concentration of Laplacian on core}

We are ready to prove the second part of Theorem~\ref{thm: core intro}, which we
restate as follows.

\begin{theorem}[Concentration of Laplacian on core]	\label{thm: Laplacian on core}
  Let $A$ be a matrix satisfying Assumption~\ref{as: A}.
  Then for any $r \ge 1$, the following holds with probability at least $1-3n^{-2r}$.
  There exists a subset $J$ of $[n]$ which contains
  all but at most $n/d$ indices, and such that
  \begin{equation}         \label{eq: Laplacian on core}
  \|(L(A) - L(\bar{A}))_{J \times J}\| \le \frac{C r \a^2 \log^3 d}{\sqrt{d}}.
  \end{equation}
\end{theorem}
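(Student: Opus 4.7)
The plan is to reduce concentration of the averaging operator $L(A) = D^{-1/2} A D^{-1/2}$ to the two concentration results already established on the core: concentration of the adjacency matrix (Theorem~\ref{thm: adjacency on core undirected}) and concentration of the degrees (Lemma~\ref{lem: degrees on core}). First I would take the intersection $J$ of the two cores provided by those results; by a union bound each misses at most $n/(2d)$ vertices, so $J$ misses at most $n/d$ vertices as required, and the failure probability is at most $3n^{-2r}$.

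Next I would write the standard telescoping identity
\begin{equation*}
L(A) - L(\bar A) = D^{-1/2}(A-\bar A) D^{-1/2} + (D^{-1/2}-\bar D^{-1/2})\bar A\, D^{-1/2} + \bar D^{-1/2}\bar A\,(D^{-1/2}-\bar D^{-1/2}),
\end{equation*}
restrict it to $J\times J$, and bound each of the three summands in operator norm. The crucial quantitative input is that on $J$ both $d_j$ and $\E d_j$ sit in an interval of size $\asymp d_0$: Lemma~\ref{lem: degrees on core} gives $|d_j-\E d_j|\le 30r\sqrt{d\log d}$, while Assumption~\ref{as: A} gives $\E d_j\ge d_0\ge d/\alpha$, so for $d$ large enough (the small-$d$ case is absorbed in $C$) we have $d_j,\E d_j\asymp d_0$ and therefore $\|D_J^{-1/2}\|,\|\bar D_J^{-1/2}\| \lesssim d_0^{-1/2}$. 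Likewise, using $|a^{-1/2}-b^{-1/2}|=|a-b|/[\sqrt{ab}(\sqrt a+\sqrt b)]$ entrywise on the diagonal gives
\begin{equation*}
\|(D_J^{-1/2}-\bar D_J^{-1/2})\| \;\lesssim\; \frac{r\sqrt{d\log d}}{d_0^{3/2}}.
\end{equation*}

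I would then plug in the three bounds. The first term is controlled by Theorem~\ref{thm: adjacency on core undirected}:
\begin{equation*}
\|D_J^{-1/2}(A-\bar A)_{J\times J}D_J^{-1/2}\|\le \frac{Cr\sqrt d\log^3 d}{d_0}\le \frac{Cr\alpha\log^3 d}{\sqrt d},
\end{equation*}
where the last inequality uses $d\le\alpha d_0$. For the two cross terms I use $\|\bar A\|\le d$ (since $\bar A$ has nonnegative entries with row sums at most $d$ under Assumption~\ref{as: A}), giving
\begin{equation*}
\|(D_J^{-1/2}-\bar D_J^{-1/2})\bar A_J\bar D_J^{-1/2}\| \;\lesssim\; \frac{r\sqrt{d\log d}}{d_0^{3/2}}\cdot d\cdot\frac{1}{\sqrt{d_0}}\;=\;\frac{Crd^{3/2}\sqrt{\log d}}{d_0^{2}}\;\le\;\frac{Cr\alpha^{2}\sqrt{\log d}}{\sqrt d},
\end{equation*}
and symmetrically for the third term. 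Summing the three bounds yields the claimed estimate $Cr\alpha^2\log^3 d/\sqrt d$, absorbing the $\alpha$-term into the $\alpha^2$-term.

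The main obstacle, and the place where the assumption $d/d_0\le\alpha$ is essential, is the cross terms: a naive bound $\|\bar A\|\le d$ costs a factor of $d$, and only by using $d_0^{-2}\le\alpha^2 d^{-2}$ do we recover the $d^{-1/2}$ scaling; this is precisely where $\alpha^2$ (rather than $\alpha$) enters the final bound. Beyond that, the only minor care needed is to verify that the Lemma~\ref{lem: degrees on core} deviation $r\sqrt{d\log d}$ is indeed negligible compared to $d_0\ge d/\alpha$, which holds since the theorem's conclusion is only nontrivial in the regime where $\alpha^2\log^3 d\ll \sqrt d$; otherwise the bound exceeds $1$ and is vacuous because $\|L(A)-L(\bar A)\|\le 2$ always.
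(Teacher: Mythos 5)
Your proof is correct and follows essentially the same route as the paper: intersecting the cores from Theorem~\ref{thm: adjacency on core undirected} and Lemma~\ref{lem: degrees on core}, writing the same telescoping decomposition of $L(A)-L(\bar A)$, and bounding the three terms with $\|\bar A\|\le d$, $\|D_J^{-1/2}\|\lesssim d_0^{-1/2}$, and the mean-value estimate for $|d_j^{-1/2}-\bar d_j^{-1/2}|$ exactly as the paper does (including dispatching the small-$d$ regime via the observation that the claimed bound exceeds $2$ when the degree deviation is comparable to $d_0$). The only differences are cosmetic: the ordering of factors in the two cross terms of your telescope is transposed relative to the paper's, which changes nothing.
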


\begin{proof}
We need to compare the Laplacians
$$
L(A) = D^{-1/2} A D^{1/2} \quad \text{and} \quad L(\bar{A}) = \bar{D}^{-1/2} \bar{A} \bar{D}^{-1/2}
$$
on a big core block $J \times J$,
where $D = \diag(d_i)$ contains the actual degrees $d_i$, and
$\bar{D} = \diag(\bar{d_i})$ the expected degrees $\bar{d_i} = \E d_i$.

We get the core set $J$ by intersecting the two corresponding sets on which
$A$ concentrates (from Theorem~\ref{thm: adjacency on core undirected})
and the degree matrix $D$ concentrates (from Lemma~\ref{lem: degrees on core}).
To keep the notation simple, let us write the Laplacians on the core as
$$
L(A)_{J \times J} = B R B \quad \text{and} \quad L(\bar{A})_{J \times J} = \bar{B} \bar{R} \bar{B},
$$
where obviously $R = A_{J \times J}$,  $\bar{R} = \bar{A}_{J \times J}$,
$B = D^{-1/2}_{J \times J}$ and $\bar{B} = \bar{D}^{-1/2}_{J \times J}$.
Then we can express the difference of the Laplacians as a telescoping sum
\begin{equation}         \label{eq: telescoping sum}
(L(A) - L(\bar{A}))_{J \times J} = B(R-\bar{R})B + B\bar{R}(B-\bar{B}) + (B-\bar{B})\bar{R}\bar{B}.
\end{equation}
We will estimate each of the three terms separately.

By the conclusion of Theorem~\ref{thm: adjacency on core}, we have
\begin{equation}         \label{eq: R Rbar}
\|R - \bar{R}\| \le C r \sqrt{d} \log^3 d.
\end{equation}
Moreover, since all entries of $\bar{A}$ are bounded by $d/n$ by assumption,
we have $\|\bar{A}\| \le d$, and in particular the sub-matrix $\bar{R}$ must also satisfy
\begin{equation}         \label{eq: Rbar}
\|\bar{R}\| \le d.
\end{equation}

Next we compare $B$ and $\bar{B}$, which are diagonal matrices with entries
$d_j^{-1/2}$ and $\bar{d_j}^{-1/2}$ on the diagonal, respectively.
Since $\bar{d_j} \ge d_0$ by assumption, we have
\begin{equation}         \label{eq: Bbar}
\|\bar{B}\| \le \frac{1}{\sqrt{d_0}}.
\end{equation}
Moreover, by the conclusion of Lemma~\ref{lem: degrees on core}, the degrees satisfy
\begin{equation}         \label{eq: d dbar}
|d_j - \bar{d_j}| \le 30 r \sqrt{d \log d} \quad \text{for all } j \in J.
\end{equation}
We can assume that the right hand side here is bounded by $d_0/2$;
otherwise the right hand side in the desired bound \eqref{eq: Laplacian on core}
is greater than two, which makes the bound trivially true.
Therefore, in particular, \eqref{eq: d dbar} implies
\begin{equation}         \label{eq: dj large}
d_j \ge \bar{d_j} - d_0/2 \ge d_0/2.
\end{equation}
The difference between the corresponding entries
of $B$ and $\bar{B}$ is
$$
\big|d_j^{-1/2} - \bar{d_j}^{-1/2}\big|
= \frac{|d_j - \bar{d_j}|}{(d_j^{1/2} + \bar{d_j}^{1/2})(d_j \bar{d_j})^{1/2}}.
$$
Since $d_j \ge d_0$ by definition, $\bar{d_j} \ge d_0/2$ by \eqref{eq: dj large},
and $|d_j - \bar{d_j}|$ is small by \eqref{eq: d dbar}, this expression
is bounded by
$$
\frac{30 r \sqrt{d \log d}}{d_0^{3/2}} = \frac{30 r \sqrt{\alpha \log d}}{d_0}.
$$
This and \eqref{eq: dj large} implies that
\begin{equation}         \label{eq: B Bbar}
\|B - \bar{B}\| \le \frac{30 r \sqrt{\alpha \log d}}{d_0} \quad \text{and} \quad
\|B\| \le \frac{2}{\sqrt{d_0}}.
\end{equation}

It remains to substitute into \eqref{eq: telescoping sum} the bounds \eqref{eq: R Rbar}
for $R-\bar{R}$, \eqref{eq: Rbar} for $\bar{R}$, \eqref{eq: B Bbar} for $B-\bar{B}$ and $B$,
and \eqref{eq: Bbar} for $\bar{B}$.
Using triangle inequality and recalling that $d=\a d_0$,
we obtain \eqref{eq: Laplacian on core} and complete the proof.
\end{proof}

\begin{remark}[Regularized Laplacian]
    We just showed that the Laplacian concentrates on the core even without regularization.
    It is also true with regularization.
    Indeed, Theorem~\ref{thm: Laplacian on core} holds for the regularized Laplacian
    $L(A_\tau) = I - \LL(A_\tau)$, and they state that
    \begin{equation}         \label{eq: regularized Laplacian on core}
    \|(L(A_\tau) - L(\bar{A}_\tau))_{J \times J}\| \le \frac{C r \a^2 \log^3 d}{\sqrt{d}} \quad \text{for any } \tau \ge 0.
    \end{equation}
    This is true because Theorem~\ref{thm: Laplacian on core} is based on
    concentration of the adjacency matrix $A$ and the degree matrix $D$ on the core.
    Both of these results trivially hold with regularization as well as without it,
    as the regularization $\tau$ parameter cancels out, e.g. $A_\tau - \bar{A}_\tau = A-\bar{A}$.
    We leave details to the interested reader.
\end{remark}

\section{Control of the Laplacian on the residual, and proof of Theorem~\ref{thm: main}}\label{Sec:LaplacianResidual}
\subsection{Laplacian is small on the residual}

Now we demonstrate how regularization makes Laplacian more stable.
We express this as the fact that small sub-matrices of the regularized Laplacian $L(A_\tau)$
have small norms. This fact can be easily deduced from the sparse decomposition of such matrices
that we constructed in Theorem~\ref{thm: decomposition of residual} and the following
elementary observation.

\begin{lemma}[Restriction of Laplacian]						\label{lem: restriction Laplacian}
  Let $B$ be an $n \times n$ symmetric matrix with non-negative entries,
  and let $\CC$ be a subset of $[n] \times [n]$.
  Consider the $n \times n$ matrix $B_\CC$ that has the same entries as $B$ on $\CC$ and
  zero outside $\CC$.
  Let $\e \in (0,1)$.
  Suppose the sum of entries of each row of $B_\CC$ is at most $\e$ times
  the sum of entries of the corresponding row of $B$.
  Then
  $$
  \|(L(B))_\CC\| \le \sqrt{\e}.
  $$
\end{lemma}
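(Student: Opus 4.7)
The plan is to bound the bilinear form $x^\tran (L(B))_\CC y$ directly by a weighted arithmetic--geometric mean inequality applied term by term, and then optimize over the weight. Since the entries of $L(B)$, and hence of $(L(B))_\CC = D^{-1/2} B_\CC D^{-1/2}$, are non-negative, the operator norm is attained on non-negative unit vectors. Thus it suffices to bound
\[
S(x,y) := \sum_{(i,j)\in \CC} \frac{B_{ij}\, x_i\, y_j}{\sqrt{d_i\, d_j}}
\]
for all $x, y \ge 0$ with $\|x\|_2 = \|y\|_2 = 1$, where $d_i = \sum_j B_{ij}$ denotes the row sum of $B$.

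The key step is to apply the inequality $\dfrac{x_i\, y_j}{\sqrt{d_i d_j}} \le \dfrac{\alpha}{2}\cdot\dfrac{x_i^2}{d_i} + \dfrac{1}{2\alpha}\cdot\dfrac{y_j^2}{d_j}$ for a free parameter $\alpha > 0$, multiply by $B_{ij} \ge 0$, sum over $\CC$, and reorganize by rows and columns:
\[
S(x,y) \le \frac{\alpha}{2}\sum_i \frac{x_i^2}{d_i}\, r_i + \frac{1}{2\alpha}\sum_j \frac{y_j^2}{d_j}\, c_j,
\]
where $r_i := \sum_j (B_\CC)_{ij}$ and $c_j := \sum_i (B_\CC)_{ij}$ are the row and column sums of $B_\CC$. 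The hypothesis of the lemma provides $r_i \le \e\, d_i$, so the first term is at most $\alpha\e/2$. For the column sums, the bound $(B_\CC)_{ij} \le B_{ij}$ (entrywise, both non-negative) together with the symmetry of $B$ gives $c_j \le \sum_i B_{ij} = \sum_i B_{ji} = d_j$, so the second term is at most $1/(2\alpha)$.

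Combining these and optimizing with $\alpha = 1/\sqrt{\e}$ to balance the two contributions yields $S(x,y) \le \sqrt{\e}$, which is the desired bound. There is no real obstacle in this argument; the only point to notice is that the row-sum hypothesis on $B_\CC$ must be ``spent'' asymmetrically against the trivial column-sum bound $c_j \le d_j$, and the correct weighted AM--GM then extracts the geometric mean $\sqrt{\e\cdot 1} = \sqrt{\e}$. Note in particular that symmetry of $B$ is used only to identify its column sums with its row sums $d_j$, while symmetry of $\CC$ itself is never required.
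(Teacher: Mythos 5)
Your argument is correct. Let me verify the key steps: the operator norm of a non-negative matrix is indeed attained on non-negative unit vectors (take coordinate-wise absolute values); the weighted AM--GM inequality $\frac{x_i y_j}{\sqrt{d_i d_j}} \le \frac{\alpha}{2}\frac{x_i^2}{d_i} + \frac{1}{2\alpha}\frac{y_j^2}{d_j}$ is valid for any $\alpha>0$; after multiplying by $B_{ij}\ge 0$ and summing over $\CC$, the row/column reorganization is correct, the row-sum hypothesis gives the first term $\le \alpha\e/2$ and the trivial bound $c_j\le d_j$ (which uses symmetry of $B$) gives the second term $\le 1/(2\alpha)$, and optimizing at $\alpha=1/\sqrt{\e}$ yields $\sqrt{\e}$.

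Your route is genuinely different from the paper's, though the two are closely related under the hood. The paper first factors
$(L(B))_\CC = \bigl(D^{-1/2}D_r^{1/2}\bigr)\,\tilde L(B_\CC)\,\bigl(D_c^{1/2}D^{-1/2}\bigr)$
where $\tilde L(B_\CC)=D_r^{-1/2}B_\CC D_c^{-1/2}$ is the ``bipartite Laplacian'' built from the row- and column-sum matrices $D_r, D_c$ of $B_\CC$; it then bounds the diagonal scaling factors by $\sqrt{\e}$ and $1$ using the same two inequalities $r_i\le\e d_i$ and $c_j\le d_j$ that you use, and separately proves $\|\tilde L(B_\CC)\|\le 1$ by embedding $B_\CC$ into the adjacency matrix of a symmetric $2n\times 2n$ bipartite graph whose Laplacian automatically has norm at most one. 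Your weighted AM--GM with a free parameter effectively collapses these two steps into a single direct computation: the optimization over $\alpha$ reproduces exactly the $\sqrt{\e}\cdot 1$ geometric mean, and the case $r_i=\e d_i$, $c_j=d_j$ recovers the standard elementary proof that a row/column-normalized non-negative matrix has norm at most one. The paper's version modularizes the bound into ``scaling'' and ``normalized norm'' pieces, which is conceptually transparent; yours is more self-contained, avoiding the bipartite embedding trick. Both use symmetry of $B$ only to identify its column sums with the row sums $d_j$, and neither requires $\CC$ itself to be symmetric.

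One minor point worth stating explicitly (true for both proofs): indices $i$ with $d_i=0$ should be excluded, since then the corresponding rows and columns of $B$, $B_\CC$, and $(L(B))_\CC$ all vanish and the division by $d_i$ is moot; this is consistent with the paper's convention of working with restrictions to nodes of positive degree.
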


\begin{proof}
Let us denote by $\tilde{L}(B_\CC)$ an analog of the Laplacian for possibly non-symmetric matrix $B_\CC$, that is
\begin{equation*}
  \tilde{L}(B_\CC) = D_r^{-1/2} B_\CC D_c^{-1/2}.
\end{equation*}
Here $D_r=\mathrm{diag}(B_\CC\onevector)$ is a diagonal matrix and each diagonal
entry $(D_r)_{i,i}$ of $D_r$ is the sum of entries of $i$-th \textit{row} of $B_\CC$;
$D_c=\mathrm{diag}(B_\CC^T\onevector)$ is a diagonal matrix and $(D_c)_{i,i}$
is the sum of entries of $i$-th \textit{column} of $B_\CC$.
Note that we can write $L(B)_\CC$ as
$$L(B)_\CC=D^{-1/2}B_\CC D^{-1/2},$$
where $D=\mathrm{diag}(B\onevector)=\mathrm{diag}(B^T\onevector)$.
We have $(D_r)_{i,i}\le \e D_{i,i}$ by the assumption and $(D_c)_{i,i}\leq D_{i,i}$
because $\CC$ is a subset of $[n]\times[n]$. Since entries of both $\tilde{L}(B_\CC)$
and $(L(B))_\CC$ are non-negative, we obtain
$$\|(L(B))_{\CC}\|\leq \sqrt{\e}\|\tilde{L}(B_\CC)\|.$$
It remains to prove $\|\tilde{L}(B_\CC)\|\leq 1$. To see this, consider an $2n\times 2n$ symmetric matrix
\begin{equation*}
  S=\left(
      \begin{array}{cc}
        0_n & B_\CC \\
        B_\CC^T & 0_n \\
      \end{array}
    \right),
\end{equation*}
where $0_n$ is an $n\times n$ matrix whose entries are zero. The Laplacian of $S$ has the form
\begin{equation*}
  L(S)=\left(
      \begin{array}{cc}
        0_n & \tilde{L}(B_\CC) \\
        \tilde{L}(B_\CC)^T & 0_n \\
      \end{array}
    \right).
\end{equation*}
Since $L(S)$ has norm one, it follows that $\|\tilde{L}(B_\CC)\|\leq 1$. This completes the proof.
\end{proof}

\begin{theorem}[Regularized Laplacian on residual]		\label{thm: Laplacian on residual}
  Let $A$ be a random matrix satisfying the same requirements as in
  Assumption~\ref{as: non-symmetric upper strong}, except $A$ is symmetric.
  Then for any $r \ge 1$ the following holds with probability $1-2n^{-2r}$.
  Any sub-matrix $L(A_\tau)_{I \times J}$ of the regularized Laplacian $L(A_\tau)$
  with at most $n/d$ rows or columns satisfies
  $$
  \|L(A_\tau)_{I \times J}\| \le \frac{2}{\sqrt{d}} + \frac{\sqrt{40 r \log d}}{\sqrt{n \tau}}
  \quad \text{for any } \tau > 0.
  $$
\end{theorem}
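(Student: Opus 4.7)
The plan is to invoke the sparse decomposition from Theorem~\ref{thm: decomposition of residual undirected} to split the support $I\times J$ into a row-sparse and a column-sparse piece, apply the restriction lemma (Lemma~\ref{lem: restriction Laplacian}) to each, and combine via the triangle inequality. This is exactly the outline sketched in Section~\ref{Sec:outline} under \emph{Controlling the regularized Laplacian on the residual}.

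Since $\min(|I|,|J|)\le n/d$, Theorem~\ref{thm: decomposition of residual undirected} produces, on an event of probability at least $1-2n^{-2r}$, a disjoint partition $I\times J=\RR\cup \CC$ for which every row of $\RR$ has at most $2\min(m,k)\le 2n/d$ entries and every row of $A_\RR$ has at most $20\,r\log d$ ones, and symmetrically for columns of $\CC$ and $A_\CC$. Viewing $L(A_\tau)_{I\times J}$ as the $n\times n$ matrix padded by zeros outside $I\times J$, this yields the additive splitting
\[
L(A_\tau)_{I\times J}=L(A_\tau)_\RR+L(A_\tau)_\CC,
\]
so by the triangle inequality it suffices to bound each summand.

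For the $\RR$-term I would apply Lemma~\ref{lem: restriction Laplacian} with $B=A_\tau$ and restriction set $\RR$. Each row sum of $(A_\tau)_\RR$ is at most the number of ones of $A_\RR$ in that row plus $\tau$ times the number of entries of $\RR$ in that row, which by the decomposition is bounded by $20\,r\log d+2n\tau/d$, while each row sum of $A_\tau$ itself is at least $n\tau$ due to regularization alone. Hence the ratio $\e$ in the lemma satisfies $\e\le 2/d+20\,r\log d/(n\tau)$, and the lemma yields $\|L(A_\tau)_\RR\|\le\sqrt{\e}\le \sqrt{2}/\sqrt{d}+\sqrt{20\,r\log d}/\sqrt{n\tau}$. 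For the $\CC$-term I would use that $L(A_\tau)$ is symmetric: transposition preserves the operator norm, so $\|L(A_\tau)_\CC\|=\|L(A_\tau)_{\CC^\tran}\|$, where $\CC^\tran:=\{(j,i):(i,j)\in\CC\}$ has sparse rows inherited from the sparse columns of $\CC$. Applying Lemma~\ref{lem: restriction Laplacian} to $A_\tau$ with restriction set $\CC^\tran$ then yields the same bound on $\|L(A_\tau)_\CC\|$.

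Summing the two contributions through the triangle inequality delivers an estimate of exactly the announced form. I do not foresee a genuine obstacle, since every step draws on an already-proved ingredient; the only mildly delicate point is the constant bookkeeping needed to pin down precisely $\frac{2}{\sqrt{d}}$ and $\frac{\sqrt{40\,r\log d}}{\sqrt{n\tau}}$ (rather than a slightly larger expression), which can be handled by being careful about how $\sqrt{a+b}\le \sqrt{a}+\sqrt{b}$ is applied before versus after summing the $\RR$ and $\CC$ contributions, or by invoking the directed decomposition Theorem~\ref{thm: decomposition of residual} separately on the upper and lower triangles of $A$ so that the factor of $2$ built into the symmetric version is not paid twice.
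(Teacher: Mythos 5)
Your proposal is correct and follows the paper's proof essentially step for step: decompose $I\times J$ into $\RR\cup\CC$ via Theorem~\ref{thm: decomposition of residual undirected}, apply Lemma~\ref{lem: restriction Laplacian} to each piece (invoking symmetry of $L(A_\tau)$ to handle the column-sparse piece by transposition, which you spell out more carefully than the paper does), and combine with the triangle inequality. The constant bookkeeping you flag at the end is a genuine discrepancy, but it sits in the paper rather than in your argument: the paper's proof cites the undirected decomposition theorem yet quotes the constants $n/d$ and $10r\log d$ from the \emph{directed} Theorem~\ref{thm: decomposition of residual}, whereas the undirected version it actually invokes only guarantees $2n/d$ and $20r\log d$. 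Neither of your suggested repairs recovers the factor: moving $\sqrt{a+b}\le\sqrt{a}+\sqrt{b}$ before or after summing over $\RR$ and $\CC$ still leaves $\sqrt{2}$ in each term, and applying the directed decomposition to the two triangles of $A$ separately yields four pieces $\RR^{\pm},\CC^{\pm}$ and a triangle inequality over four terms, which is worse. The honest constant that your (correct) argument delivers is $\frac{2\sqrt{2}}{\sqrt{d}}+\frac{\sqrt{80\,r\log d}}{\sqrt{n\tau}}$, a factor $\sqrt{2}$ above the theorem's statement; this is a minor slip in the paper and immaterial given its explicit disclaimer that constants were not optimized.
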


\begin{proof}
The decomposition $I \times J = \RR \cup \CC$ we constructed
in Theorem~\ref{thm: decomposition of residual undirected} reduces the problem
to bounding $L(A_\tau)_\RR$ and $L(A_\tau)_\CC$.
Let us focus on $L(A_\tau)_\RR$.
Recall that every row of the index set $\RR$ has at most $n/d$ entries, and
every row of the matrix $A_\RR$ has at most $10 r \log d$ entries that equal one
(while all other entries are zero). This implies that
the sum of entries of each row of $(A_\tau)_\RR$ is bounded by
$$
\frac{n \tau}{d} + 10 r \log d.
$$
We compare this to the sum of the entries of each row of $A_\tau$, which is trivially
at least $n \tau$. It is worthwhile to note that this is the only place in the entire argument
where regularization is crucially used.
Applying the Restriction Lemma~\ref{lem: restriction Laplacian},
we obtain
$$
\|L(A_\tau)_\RR\| \le \sqrt{ \frac{1}{d} + \frac{10 r \log d}{n \tau} }.
$$
Repeating the same reasoning for columns, we obtain the same bound for $L(A_\tau)_\CC$.
Using triangle inequality and simplifying the expression, we conclude the desired bound
for $L(A_\tau)_{I \times J}$.
\end{proof}

Let us notice a similar, and much simpler, bound for the Laplacian of the regularized expected matrix
$\bar{A}_\tau = \E A_\tau$.

\begin{lemma}[Regularized Laplacian of the expected matrix on the residual]		\label{lem: Laplacian expected on residual}
  Let $A$ be a random matrix satisfying the same requirements as in
  Assumption~\ref{as: non-symmetric upper strong}, except $A$ is symmetric.
  Then any sub-matrix $L(\bar{A}_\tau)_{I \times J}$ with at most $n/d$ rows or columns satisfies
  $$
  \|L(\bar{A}_\tau)_{I \times J}\| \le \frac{2}{\sqrt{d}} + \frac{2}{\sqrt{n \tau}}
  \quad \text{for any } \tau > 0.
  $$
\end{lemma}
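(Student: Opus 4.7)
The plan is to mimic the proof of Theorem~\ref{thm: Laplacian on residual} but in a deterministic and much simplified form, since $\bar{A}_\tau$ has all entries uniformly bounded by $d/n + \tau$, so no sparse decomposition of the type produced by Theorem~\ref{thm: decomposition of residual undirected} is required.

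First, I would reduce to the case $|J| \le n/d$ by symmetry: since $\bar A_\tau$ is symmetric, so is $L(\bar A_\tau)$, and for a symmetric matrix $M$ the operator norms of $M_{I \times J}$ and $M_{J \times I}$ coincide. So without loss of generality, assume $|J| \le n/d$.

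Next I would apply the Restriction Lemma~\ref{lem: restriction Laplacian} with $B = \bar{A}_\tau$ and $\CC = I \times J$. Note that $L(\bar A_\tau)_\CC$ is just $L(\bar A_\tau)_{I \times J}$ padded with zeros, so these matrices have the same operator norm. The row-sum estimate is immediate: for $i \in I$,
\begin{equation*}
\sum_{j \in J}(\bar A_\tau)_{ij}
= \sum_{j \in J}(p_{ij} + \tau)
\le |J|\Big(\frac{d}{n} + \tau\Big)
\le \frac{n}{d}\Big(\frac{d}{n} + \tau\Big)
= 1 + \frac{n\tau}{d},
\end{equation*}
while every row sum of $\bar A_\tau$ itself is at least $n\tau$ (the regularization contribution). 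This gives the ratio
\begin{equation*}
\e \le \frac{1 + n\tau/d}{n\tau} = \frac{1}{d} + \frac{1}{n\tau}.
\end{equation*}
Applying Lemma~\ref{lem: restriction Laplacian} and using $\sqrt{a+b}\le \sqrt{a}+\sqrt{b}$ yields
\begin{equation*}
\|L(\bar A_\tau)_{I \times J}\|
\le \sqrt{\e}
\le \frac{1}{\sqrt{d}} + \frac{1}{\sqrt{n\tau}},
\end{equation*}
which is even sharper than the stated bound $2/\sqrt{d}+2/\sqrt{n\tau}$.

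There is essentially no obstacle here: unlike in Theorem~\ref{thm: Laplacian on residual}, where the binary adjacency matrix can have unbounded column sums on a small block and one must invoke the sparse decomposition to control them, the entries of $\bar A_\tau$ are already deterministically bounded by $d/n + \tau$. The slack factor of $2$ in the statement is there presumably for later convenience when combining with the random version rather than to accommodate the proof.
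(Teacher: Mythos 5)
Your proof is correct and follows essentially the same route as the paper's: bound the row (or, in the paper, column) sums of $(\bar A_\tau)_{I\times J}$ using the uniform entry bound $d/n+\tau$, compare against the full row sum $\ge n\tau$, and apply the Restriction Lemma~\ref{lem: restriction Laplacian} to obtain $\sqrt{1/d+1/(n\tau)}$. Your version is slightly more explicit about the symmetry reduction and the mapping onto the hypotheses of Lemma~\ref{lem: restriction Laplacian}, but it is not a different argument.
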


\begin{proof}
Assume that $L(\bar{A}_\tau)_{I \times J}$ has at most $n/d$ rows.
Recall that the matrix $\bar{A}_\tau$ has entries $n p_{ij} + \tau$.
Then the sum of entries of each column of the sub-matrix $(\bar{A}_\tau)_{I \times J}$ is at most
$$
\frac{n}{d} \cdot \max_j (p_{ij} + \tau) \le \frac{n \tau}{d} + 1.
$$
We compare this to the sum of entries of each column of $A_\tau$, which is at least $n \tau$.
Applying the Restriction Lemma~\ref{lem: restriction Laplacian}, we obtain
$$
\|L(\bar{A}_\tau)_{I \times J}\| \le \sqrt{ \frac{1}{d} + \frac{1}{n\tau} }.
$$
This leads to the desired conclusion.
\end{proof}

\subsection{Concentration of the regularized Laplacian}

We are ready to deduce the main Theorem~\ref{thm: main} in a slightly stronger form.

\begin{theorem}[Concentration of the regularized Laplacian]				\label{thm: regularized Laplacian}
  Let $A$ be a random matrix satisfying Assumption~\ref{as: A}.
  Then for any $r\geq 1$, with probability at least $1-n^{-r}$ we have
  $$
  \|L(A_\tau)-L(\bar{A}_\tau) \|
  \le \frac{C r \a^2 \log^3 d}{\sqrt{d}} + \frac{20 \sqrt{r \log d}}{\sqrt{n \tau}} \quad \text{for any } \tau > 0.
  $$
\end{theorem}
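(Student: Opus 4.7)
\medskip
\noindent\textbf{Proof proposal.}
The plan is to combine the core concentration from Theorem~\ref{thm: Laplacian on core} (in its regularized form, as indicated by the remark that follows it) with the residual bounds from Theorem~\ref{thm: Laplacian on residual} and Lemma~\ref{lem: Laplacian expected on residual}. All three results have already done the substantive work; what remains is a clean triangle-inequality assembly.

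First I would fix a realization on which all three input results hold simultaneously. By a union bound, this happens with probability at least $1-Cn^{-2r}$, which after adjusting the constant in the exponent (absorbing into $r$) gives at least $1-n^{-r}$. In particular, I obtain a subset $J\subseteq[n]$ with $|J^c|\le n/d$ satisfying the regularized core bound
\begin{equation*}
  \|(L(A_\tau)-L(\bar A_\tau))_{J\times J}\|\le \frac{Cr\a^2\log^3 d}{\sqrt d}.
\end{equation*}

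Second, I would write the full matrix $M:=L(A_\tau)-L(\bar A_\tau)$ as the sum of three pieces supported on the disjoint index sets
\begin{equation*}
  [n]\times[n]=(J\times J)\,\sqcup\,(J^c\times[n])\,\sqcup\,(J\times J^c),
\end{equation*}
so that $\|M\|\le \|M_{J\times J}\|+\|M_{J^c\times[n]}\|+\|M_{J\times J^c}\|$ by the triangle inequality applied to the corresponding restricted matrices (each extended by zeros to size $n\times n$). The first term is already bounded. For the second and third, since $|J^c|\le n/d$, both $J^c\times[n]$ and $J\times J^c$ are sub-matrices with at most $n/d$ rows or columns. I would split each using triangle inequality as
\begin{equation*}
  \|M_{J^c\times[n]}\|\le \|L(A_\tau)_{J^c\times[n]}\|+\|L(\bar A_\tau)_{J^c\times[n]}\|,
\end{equation*}
and analogously for $J\times J^c$. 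Theorem~\ref{thm: Laplacian on residual} controls the first summand by $\tfrac{2}{\sqrt d}+\tfrac{\sqrt{40r\log d}}{\sqrt{n\tau}}$, and Lemma~\ref{lem: Laplacian expected on residual} controls the second by $\tfrac{2}{\sqrt d}+\tfrac{2}{\sqrt{n\tau}}$.

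Adding the three pieces and absorbing the $O(1/\sqrt d)$ residual contributions into the dominant $Cr\a^2\log^3 d/\sqrt d$ core term (and the $O(1/\sqrt{n\tau})$ contributions into the $20\sqrt{r\log d}/\sqrt{n\tau}$ term) yields the claimed inequality. There is no real obstacle here beyond bookkeeping: the delicate work has already been done upstream, with Grothendieck and paving supplying the core bound, and the sparse row/column decomposition of Theorem~\ref{thm: decomposition of residual undirected} underpinning the residual bound. The one minor subtlety is checking that the regularized version of Theorem~\ref{thm: Laplacian on core} is genuinely available; this is where I would invoke the remark noting that $A_\tau-\bar A_\tau=A-\bar A$ and $D_\tau-\bar D_\tau=D-\bar D$, so the entire proof of Theorem~\ref{thm: Laplacian on core} transcribes verbatim with $A,\bar A,D,\bar D$ replaced by their regularized counterparts (the $\tau$ parameters cancel in all the concentration estimates and only appear through better lower bounds on the diagonal scalings, which can only help).
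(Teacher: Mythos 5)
Your proposal follows the paper's proof essentially verbatim: fix a realization on which Theorem~\ref{thm: Laplacian on core} (in regularized form) and Theorem~\ref{thm: Laplacian on residual} both hold, decompose $[n]\times[n]$ into $J\times J$, $J^c\times[n]$, and $J\times J^c$, bound the first by the core concentration and the other two by the residual theorem plus Lemma~\ref{lem: Laplacian expected on residual}, and combine by triangle inequality. The only point where you are slightly more cavalier than is warranted is in asserting that the regularized version of Theorem~\ref{thm: Laplacian on core} ``transcribes verbatim'': while $A_\tau-\bar A_\tau=A-\bar A$ does make the bound \eqref{eq: R Rbar} carry over, the factor $\|\bar R_\tau\|$ grows to order $d+n\tau$, and one must check (by splitting into the cases $n\tau\lesssim d_0$ and $n\tau\gtrsim d_0$) that this growth is compensated by the better lower bounds on the regularized degrees appearing in $B_\tau$, $\bar B_\tau$, and $B_\tau-\bar B_\tau$; the paper's remark also glosses over this and leaves it to the reader, so this is not a gap relative to the paper, but it is a real calculation rather than a pure cancellation.
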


\begin{proof}
The proof is a combination of the Concentration Theorem~\ref{thm: Laplacian on core}
and the Restriction Theorem~\ref{thm: Laplacian on residual}.
Fix a realization of $A$ for which the conclusions of both of these results hold.
Theorem~\ref{thm: Laplacian on core} yields the existence of
a core set $J$ that contains all but at most $n/d$ indices from $[n]$,
and on which the regularized Laplacian concentrates:
\begin{equation}         \label{eq: Laplacian concentrates conclusion}
\|(L(A_\tau) - L(\bar{A}_\tau))_{J \times J}\| \le \frac{C r \a^2 \log^3 d}{\sqrt{d}}.
\end{equation}
(Here we used the version \eqref{eq: regularized Laplacian on core} that is valid
for the regularized Laplacian.)

Next, let us decompose the residual $[n] \times [n] \setminus J \times J$ into
two blocks $J^c \times [n]$ and $J \times J^c$. The first block has at most $n/d$ rows,
so the conclusion of Restriction Theorem~\ref{thm: Laplacian on residual} applies to it.
It follows that
$$
\|L(A_\tau)_{J^c \times [n]}\| \le
\frac{2}{\sqrt{d}} + \frac{\sqrt{40 r \log d}}{\sqrt{n \tau}},
$$
An even simpler bound holds for the expected version $L(\bar{A}_\tau)_{J^c \times [n]}$
according to Lemma~\ref{lem: Laplacian expected on residual}.
Summing these two bounds by triangle inequality, we conclude that
that
$$
\|(L(A_\tau) - L(\bar{A}_\tau))_{J^c \times [n]}\|
\le \frac{4}{\sqrt{d}} + \frac{10\sqrt{r \log d}}{\sqrt{n \tau}}.
$$
In a similar way we obtain the same
bound for the restriction onto the second residual block, $J \times J^c$.
Combining these two bounds with \eqref{eq: Laplacian
concentrates conclusion}, we complete the proof by triangle inequality.
\end{proof}

\section{Proof of Corollary~\ref{Cor:CommunityDetection} (community detection)}\label{Sec:ConsistencyComDetn}
\begin{proof}[Proof of Corollary~\ref{Cor:CommunityDetection}]
Note that $n\tau$ is the average node degree with expectation $(a+b)/2$. Using Bernstein's inequality (see Theorem~2.10 in ~\cite{Boucheron&Lugosi&Massart2013}), it is easy to check that
with probability at least $1-e^{-rn}$, we have
\begin{equation}\label{Eq:AverDegrCon0}
  \Big|n\tau-\frac{a+b}{2}\Big|\leq \frac{16r}{\sqrt{a+b}}\frac{a+b}{2}.
\end{equation}
It follows from assumption \eqref{eq: Gnab condition} (and increasing the constant $C$ if necessary) that $16r/\sqrt{a+b}\le 1/2$. Therefore \eqref{Eq:AverDegrCon0} implies
\begin{equation}\label{Eq:AverDegrCon}
  \Big|n\tau-\frac{a+b}{2}\Big|\le\frac{a+b}{4}.
\end{equation}
Let us fix a realization of the random matrix $A$ which satisfies \eqref{Eq:AverDegrCon} and the conclusion of Theorem~\ref{thm: main}.
For the model $G(n,\frac{a}{n},\frac{b}{n})$ we have $d=a$ and $\alpha\le 2$. From Theorem~\ref{thm: main} and \eqref{Eq:AverDegrCon} we obtain
\begin{eqnarray}\label{Eq:TwoComLapBound}
  \left\|L(A_\tau)-L(\bar{A}_\tau)\right\|
  &<& C^\prime r log^3(a)\left(\frac{1}{\sqrt{a}}+\frac{1}{\sqrt{n\tau}}\right) \\
  \nonumber&\le& \frac{3C^\prime r\log^3a}{\sqrt{a}}=:\d,
\end{eqnarray}
for some absolute constant $C^\prime>0$.

We will use Davis-Kahan Theorem (see Theorem VII.3.2 in \cite{Bhatia1996}) and \eqref{Eq:TwoComLapBound}
to bound the difference between $v$ and $\bar{v}$.
Matrix $L(\bar{A}_\tau)$ has two non-zero eigenvalues: $\lambda_1=1$ and $\lambda_2=(a-b)/(a+b+n\tau)$.
By \eqref{Eq:AverDegrCon} we have
\begin{equation}\label{Eq:EigValBound}
  \frac{4(a-b)}{7(a+b)}\leq\lambda_2\leq\frac{4(a-b)}{5(a+b)}\leq\frac{4}{5}.
\end{equation}
To upper-bound the gaps in the spectra of $L(A_\tau)$ and $L(\bar{A}_\tau)$, let us denote
$$S=(\lambda_2-\d,4/5+\d), \quad S^\prime=(-\d,\d)\cup(1-\d,1+\d).$$
Then $\lambda_2\in S$ because $\l_2\le 4/5$ by \eqref{Eq:EigValBound}; the remaining eigenvalues of $L(\bar{A}_\tau)$, which are either zero or one, are in $S^\prime$. Inequality \eqref{Eq:TwoComLapBound} implies that eigenvalues of $L(A_\tau)$ are at most $\d$ away from the corresponding eigenvalues of $L(\bar{A}_\tau)$. Therefore the second largest eigenvalue of $L(A_\tau)$ is in $S$ and the remaining eigenvalues of $L(A_\tau)$ are in $S^\prime$.

Note that $S$ and $S^\prime$ are disjoint because $\d$ is small compared to $\l_2$. In fact, from the definition of $\d$, assumption \eqref{eq: Gnab condition}
(increasing the constant $C$ if necessary), and \eqref{Eq:EigValBound} we have
\begin{equation}\label{eq: bound of delta}
  \d \leq \frac{3C^\prime}{\sqrt{a}}\cdot\frac{\e(a-b)}{\sqrt{C(a+b)}}
  \le\frac{3C^\prime\e}{\sqrt{C}}\cdot\frac{a-b}{a+b}
  \le \frac{\e \l_2}{20}.
\end{equation}
Using \eqref{Eq:EigValBound} and \eqref{eq: bound of delta}, we bound the distance $\text{dist}(S,S^\prime)$ between $S$ and $S^\prime$ as follows:
\begin{eqnarray}\label{Eq:SpecGap}
  \text{dist}(S,S^\prime)=\min\Big\{\lambda_2-2\d,\frac{1}{5}-2\d\Big\}
  \ge \frac{\lambda_2}{4}-2\d > \frac{\lambda_2}{8}.
\end{eqnarray}
Applying Theorem VII.3.2 in \cite{Bhatia1996} and using \eqref{Eq:TwoComLapBound}, \eqref{Eq:SpecGap}, \eqref{eq: bound of delta} we obtain
\begin{equation}\label{eq: projection concentration}
  \left\|v v^T-\bar{v}\bar{v}^T\right\|
  \le \frac{(\pi/2)\left\|L(A_\tau)-L(\bar{A}_\tau)\right\|}{\mathrm{dist}(S,S^\prime)}
  \le \frac{(\pi/2)\d}{\l_2/8}
  \le \frac{\pi\e}{5}.
\end{equation}
It is easy to check that
\begin{eqnarray}\label{Eq: eigenvector concentration}
  \min_{\beta=\pm 1}\|v+\beta\bar{v}\|_2 \leq \sqrt{2}\left\|v v^T-\bar{v}\bar{v}^T\right\|.
\end{eqnarray}
Therefore from \eqref{eq: projection concentration} and \eqref{Eq: eigenvector concentration} we have
$\min_{\beta=\pm 1}\|v+\beta\bar{v}\|_2 \le \e$. The proof is complete.
\end{proof}


\bibliography{allref,arash-sdp}
\bibliographystyle{abbrv}

\end{document}